\numberwithin{equation}{section}
\theoremstyle{plain}
\newtheorem{thm}{Theorem}[section]
\newtheorem{lemma}{Lemma}[section]
\newtheorem{prop}{Proposition}[section]
\newtheorem{assu}{Assumption}[section]
\newtheorem{defi}{Definition}[section]
\theoremstyle{remark}
\begin{document}

\begin{center}
\Large{A Framework for Controlling Sources of Inaccuracy in Gaussian Process Emulation of Deterministic Computer Experiments}
\end{center}

Benjamin Haaland$^a$, Wenjia Wang$^a$, Vaibhav Maheshwari$^b$\\

\noindent$^a$Georgia Institute of Technology, School of Industrial and Systems Engineering. Emails: bhaaland3@gatech.edu, wenjiawang@gatech.edu\\
$^b$Renal Research Institute. Email: vaibhav.maheshwari@rriny.com\\

%
%
%



\begin{abstract}
Computer experiments have become ubiquitous in science and engineering. Commonly,
runs of these simulations demand considerable time and computing, making experimental
design extremely important in gaining high quality information with limited time and resources.
Broad principles of experimental design are proposed and justified which ensure high nominal,
numeric, and parameter estimation accuracy for Gaussian process emulation of deterministic simulations.
The space-filling properties ``small fill distance'' and ``large separation distance'' are only
weakly conflicting and ensure well-controlled nominal, numeric, and parameter estimation error.
Non-stationarity indicates a greater density of experimental inputs in regions of the input space
with more quickly decaying correlation, while non-constant regression functions indicate a balancing
of traditional design features with space-fillingness. This work provides robust, rigorously
justified, and practically useful overarching sufficient principles for scientists and engineers selecting
combinations of simulation inputs with high information content.
\end{abstract}

\noindent Keywords:
{Computer Experiment},
{Emulation},
{Experimental Design},
{Interpolation},
{Gaussian Process},
{Reproducing Kernel Hilbert Space}.


\section{Background}\label{introduction}

Computer experiments are complex mathematical models implemented in large computer codes used by scientists and engineers to study real systems. 
In many situations, the potential for actual experimentation could be very limited. 
For example, 
a computational fluid dynamics simulation could be used to compare outflow rates of various sclera flap geometries in trabeculectomy \cite{tse}, 
mosquito population dynamics could be coupled with dengue transmission models to study urban dengue control \cite{ellis}, or
a finite-volume radiation hydrodynamics model could be calibrated to a small physical data set, allowing exploration of input configurations for which experimental data is not available \cite{holloway}. Often, a thorough exploration of the unknown simulation function or mean simulation function is wanted. However, the simulation is typically expensive enough that this exploration must be conducted very wisely. 
A seemingly high-quality solution is to evaluate the expensive simulation at several well-distributed data sites and then build an inexpensive approximation, or emulator, for the simulation. The accuracy of this emulator depends very strongly on the manner in which data is collected from the expensive function \cite{santner, fang, haaland2010}. 

Here, we develop principles of \emph{data collection} for Gaussian process emulation of deterministic computer experiments which are broadly applicable and rigorously justified. Three sources of inaccuracy will be considered, nominal error, numeric error, and emulation error due to parameter estimation. Stationary and non-stationary situations, as well as regression functions, will be considered.
Here, the goal is not to develop optimal experimental designs, but instead to identify features of experimental designs which \emph{simultaneously} have high-quality nominal, numeric, and parameter estimation properties across a broad spectrum of specific situations.
Several design criteria are available for computer experiments, including maximin, minimax, low-dimensional stratification, discrepancy, minimum integrated mean squared prediction error, maximum entropy, orthogonality, and others \cite{jin}.
Here, we proceed by decomposing the emulation error into nominal, numeric, and parameter estimation components, then bound each source of error in turn, in terms of design properties. 
For the stationary covariance with constant (or null) mean situation, the developed conditions are well-aligned with the distance-based maximin and minimax criteria.
Non-stationarity indicates more design locations in regions with more rapidly decaying correlation, 
while regression functions introduce a balancing of traditional design properties with the distance-based criteria.
To the best of our knowledge, this is the first presentation of sufficient conditions for simulataneously controlling nominal, numeric, and parameter estimation error in Gaussian process emulation under stationarity, non-stationarity, and the presence of regression functions.

\section{Preliminaries}\label{preliminaries}

Let $f: \Omega\to\mathbb{R}$, denote the function linking a computer experiment's input to its output for $\Omega\subset\mathbb{R}^d$. 
Ordinarily, the approximation $\hat f$ to the unknown function $f$ depends on several parameters $\boldsymbol{\vartheta}$.
Here, let
$\hat{f}_{\boldsymbol{\vartheta}}$ denote the \emph{nominal} emulator at a particular value of the parameters $\boldsymbol{\vartheta}$, and 
$\tilde{f}_{\boldsymbol{\vartheta}}$ denote the \emph{numeric} emulator at a particular value of the parameters $\boldsymbol{\vartheta}$. 
The numeric emulator represents the emulator which is calculated using floating point arithmetic, while the nominal emulator represents the idealized, exact arithmetic, version thereof.
Then, for any norm $\|\cdot\|$, particular value of the parameters $\boldsymbol{\vartheta}_*$, and corresponding parameter estimate $\hat{\boldsymbol{\vartheta}}$, the normed deviation of the emulator from the computer experiment can be decomposed into \emph{nominal}, \emph{numeric}, and \emph{parameter estimation} components using the triangle inequality as shown below, extending ideas in \cite{haaland2011} to also consider inaccuracy due to parameter estimation. 
\begin{gather}
\begin{split}
\|f-\tilde{f}_{\hat{\boldsymbol{\vartheta}}}\|&=\|f-\hat{f}_{\boldsymbol{\vartheta}_*}+
\hat{f}_{\boldsymbol{\vartheta}_*}-\hat{f}_{\hat{\boldsymbol{\vartheta}}}+
\hat{f}_{\hat{\boldsymbol{\vartheta}}}-\tilde{f}_{\hat{\boldsymbol{\vartheta}}}\|\\
&\le\underbrace{\|f-\hat{f}_{\boldsymbol{\vartheta}_*}\|}_{\rm nominal}+
\underbrace{\|\hat{f}_{\hat{\boldsymbol{\vartheta}}}-\tilde{f}_{\hat{\boldsymbol{\vartheta}}}\|}_{\rm numeric}+
\underbrace{\|\hat{f}_{\boldsymbol{\vartheta}_*}-\hat{f}_{\hat{\boldsymbol{\vartheta}}}\|}_{\rm parameter}. \label{decomposition}
\end{split}
\end{gather}
Note that inequality (\ref{decomposition}) does not make any assumption about the norm or type of emulator used. 
It is also noteworthy that this error decomposition considers numeric error in evaluation of the interpolator but does not consider numeric error in the parameter estimation process. 
Explicit consideration of numeric error in parameter estimation would result in a fourth term in the error decomposition.

Consider the $L_2$ norm on the domain of interest $\Omega$ 
\begin{gather}
\|g\|=\|g\|_{L_2(\Omega)}=\sqrt{\int_\Omega g({\bf x})^2{\rm d}{\bf x}}.\label{L2}
\end{gather}
For the $L_2$ norm (\ref{L2}) and any expectation $\mathbb{E}$ we have
\begin{gather}
\begin{split}
\mathbb{E}\|g\|&=\mathbb{E}\sqrt{\int_\Omega g({\bf x})^2{\rm d}{\bf x}}\le\sqrt{\int_\Omega\mathbb{E}g({\bf x})^2{\rm d}{\bf x}},\label{expSqrt}
\end{split}
\end{gather}
by Jensen's inequality and Tonelli's theorem \cite{bartle}.
Applying relation (\ref{expSqrt}) to the error decomposition (\ref{decomposition}), gives
\begin{gather}
\begin{split}
&\mathbb{E}\|f-\tilde{f}_{\hat{\boldsymbol{\vartheta}}}\|
\le \mathbb{E}\|f-\hat{f}_{\boldsymbol{\vartheta}_*}\|+
\mathbb{E}\|\hat{f}_{\hat{\boldsymbol{\vartheta}}}-\tilde{f}_{\hat{\boldsymbol{\vartheta}}}\|+
\mathbb{E}\|\hat{f}_{\boldsymbol{\vartheta}_*}-\hat{f}_{\hat{\boldsymbol{\vartheta}}}\|\\
&\le\sqrt{\int_\Omega \mathbb{E}\left\{f({\bf x})-\hat{f}_{\boldsymbol{\vartheta}_*}({\bf x})\right\}^2{\rm d}{\bf x}} + \sqrt{\int_\Omega \mathbb{E}\left\{\hat{f}_{\hat{\boldsymbol{\vartheta}}}({\bf x})-\tilde{f}_{\hat{\boldsymbol{\vartheta}}}({\bf x})\right\}^2{\rm d}{\bf x}}\\
&\quad+\sqrt{\int_\Omega \mathbb{E}\left\{\hat{f}_{\boldsymbol{\vartheta}_*}({\bf x})-\hat{f}_{\hat{\boldsymbol{\vartheta}}}({\bf x})\right\}^2{\rm d}{\bf x}}.\label{decomposition2}
\end{split}
\end{gather}
More generally, convexity of the function $t\mapsto |t|^p$ for $1\le p\le\infty$ ensures that versions of inequalities (\ref{expSqrt}) and (\ref{decomposition2}) hold for the $L_p(\Omega)$ norms $1\le p\le\infty$.
Here, the \emph{loss function}, or measure of inaccuracy, will be the left-hand-side of equation (\ref{decomposition2}), $\mathbb{E}\|f-\tilde{f}_{\hat{\boldsymbol{\vartheta}}}\|$, for the expectation conditional on the \emph{data}, the experimental design and corresponding output values, $({\bf x}_i,f({\bf x}_i)),$ $i=1,\ldots,n$.

Throughout, consider a Gaussian Process (GP) model for interpolation, 
\begin{gather}
f\sim {\rm GP} \left( {\bf h}(\cdot)'\boldsymbol{\beta},\Psi_{\boldsymbol{\theta}}(\cdot,\cdot) \right)\nonumber
\end{gather}
for some fixed, known regression functions ${\bf h}(\cdot)$, 
and let $\boldsymbol{\vartheta}=\left(\begin{array}{ccc}\boldsymbol{\beta}' & \boldsymbol{\theta}'\end{array}\right)'$. 
The \emph{particular} GP draw $f$ is the \emph{truth} one hopes to discover.
It is assumed that $\Psi_{\boldsymbol{\theta}}(\cdot,\cdot)$ is a positive definite function \cite{wendland2005}.
A considerable collection of theoretical development related to GP regression is given in \cite{steinKriging}.
For a particular dataset ${\bf X}=\{{\bf x}_1,\ldots,{\bf x}_n\}$ and input of interest ${\bf x}$, the \emph{best linear unbiased predictor} (BLUP) is
\begin{equation}
\hat{f}_{\boldsymbol{\vartheta}}({\bf x})={\bf h(x)}'\hat{\boldsymbol{\beta}}+\Psi_{\boldsymbol{\theta}}({\bf x,X})\Psi_{\boldsymbol{\theta}}
({\bf X,X})^{-1} \left( f({\bf X})-{\bf H(X)}\hat{\boldsymbol{\beta}} \right), \label{GPinterpolator}
\end{equation}
where $\Psi_{\boldsymbol{\theta}}({\bf A,B})=\{\Psi_{\boldsymbol{\theta}}({\bf a}_i,{\bf b}_j)\}$ and $f({\bf A})=\{f({\bf a}_i)\}$ for ${\bf A}=\{{\bf a}_i\}$ and ${\bf B}=\{{\bf b}_j\}$, ${\bf H}({\bf X})$ has rows ${\bf h}({\bf x}_i)'$, $\hat{\boldsymbol{\beta}}= \left( {\bf H}({\bf X})'\Psi_{\boldsymbol{\theta}}({\bf X},{\bf X})^{-1}{\bf H}({\bf X}) \right)^{-1}{\bf H}({\bf X})'\Psi_{\boldsymbol{\theta}}({\bf X},{\bf X})^{-1}f({\bf X})$,
and ${\boldsymbol{\theta}}$ equals the vector of true correlation parameters \cite{SWMW1989}.
The BLUP is \emph{best} in the sense that it minimizes the MSPE, $\mathbb{E}\left\{f({\bf x})-\hat{f}_{\boldsymbol{\vartheta}_*}({\bf x})\right\}^2$ (integrated and square rooted in the first term on the right-hand-side of equation (\ref{decomposition2})).
While the BLUP cannot actually be computed, due to unknown correlation parameters and floating point arithmetic, the parameters are commonly estimated via maximum likelihood and the BLUP's floating point approximation is taken as the estimate of the unknown function $f$.
Here, the BLUP, as shown in (\ref{GPinterpolator}), will be taken as the \emph{nominal} emulator.

The overall approach will be to provide bounds for each of the three terms on the right-hand side of (\ref{decomposition2}) in terms of properties of the experimental design ${\bf X}$. 
It will be shown that the \emph{nominal}, \emph{numeric}, and \emph{parameter estimation} criteria are only weakly conflicting, and lead to broadly similar experimental designs.
The remainder of this article is organized as follows. 
In Sections \ref{nominal}, \ref{numeric}, and \ref{paramEst}, bounds on the nominal, numeric, and parameter estimation error, respectively, are developed. 
In each section, experimental design characteristics which lead to small error bounds are discussed and a few examples are given.
In Section \ref{discussion}, a few comparisons are presented and the implications of these broad principles are briefly discussed.

\section{Nominal Error}\label{nominal} 
Focusing on the first term of (\ref{decomposition2}), the \emph{nominal} or mean squared prediction error (MSPE) is given by \cite{SWMW1989},
\begin{equation}
\begin{split}
&\mathbb{E}\left\{f({\bf x})-\hat{f}_{\boldsymbol{\vartheta}}({\bf x})\right\}^2 \\
&=
\Psi_{\boldsymbol{\theta}}({\bf x,x})-\left(\begin{array}{cc}
{\bf h}({\bf x})'&\Psi_{\boldsymbol{\theta}}({\bf x,X})
\end{array}\right)\left(\begin{array}{cc}
{\bf 0}&{\bf H(X)}'\\
{\bf H}({\bf X})&\Psi_{\boldsymbol{\theta}}({\bf X,X})
\end{array}\right)^{-1}\left(\begin{array}{c}
{\bf h}({\bf x})\\
\Psi_{\boldsymbol{\theta}}({\bf X,x})\label{MSPE}
\end{array}\right).
\end{split} 
\end{equation}
Note that throughout this section, the unknown parameters are taken at their true values.
This is not an assumption \emph{per se}, but instead a consequence of the error decomposition (\ref{decomposition}).
The error due to estimating the parameters is considered separately.
In line with intuition, increasing the number of data points always reduces the nominal error. 
The proof of Proposition \ref{nominalProp} is provided in Appendix \ref{nominalPropProof}.
\begin{prop}\label{nominalProp}
If $f\sim {\rm GP} \left( {\bf h}(\cdot)'\boldsymbol{\beta},\Psi_{\boldsymbol{\theta}}(\cdot,\cdot) \right)$, for fixed, known regression functions ${\bf h}(\cdot)$ and ${\bf X}_1\subseteq{\bf X}_2$, then
${\rm MSPE}_2\le{\rm MSPE}_1$,
where ${\rm MSPE}_1$ and ${\rm MSPE}_2$ denote the MSPE of the BLUPs based on ${\bf X}_1$ and ${\bf X}_2$, respectively.
\end{prop}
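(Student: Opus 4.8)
The plan is to avoid manipulating the bordered matrix in (\ref{MSPE}) directly and instead exploit the variational characterization of the BLUP. Recall that $\hat{f}_{\boldsymbol{\vartheta}}$ is, by construction, the predictor minimizing mean squared prediction error among all \emph{linear unbiased} predictors of $f(\mathbf{x})$, and that the right-hand side of (\ref{MSPE}) is exactly the attained minimum. The strategy is therefore to show that every linear unbiased predictor based on $\mathbf{X}_1$ can be reinterpreted as a linear unbiased predictor based on $\mathbf{X}_2$ having identical mean squared error; nesting of the two minimization problems then yields $\mathrm{MSPE}_2 \le \mathrm{MSPE}_1$ immediately.

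First I would make the variational statement precise. A linear predictor based on a design $\mathbf{X}$ has the form $\boldsymbol{\lambda}' f(\mathbf{X})$ for a weight vector $\boldsymbol{\lambda}$ depending on $\mathbf{x}$, and it is unbiased for $f(\mathbf{x})$ for every $\boldsymbol{\beta}$ precisely when $\mathbf{H}(\mathbf{X})'\boldsymbol{\lambda} = \mathbf{h}(\mathbf{x})$, since $\mathbb{E}\{f(\mathbf{x})\} = \mathbf{h}(\mathbf{x})'\boldsymbol{\beta}$ and $\mathbb{E}\{f(\mathbf{X})\} = \mathbf{H}(\mathbf{X})\boldsymbol{\beta}$. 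Its mean squared error is the quadratic form $\Psi_{\boldsymbol{\theta}}(\mathbf{x},\mathbf{x}) - 2\boldsymbol{\lambda}'\Psi_{\boldsymbol{\theta}}(\mathbf{X},\mathbf{x}) + \boldsymbol{\lambda}'\Psi_{\boldsymbol{\theta}}(\mathbf{X},\mathbf{X})\boldsymbol{\lambda}$, and minimizing it subject to the unbiasedness constraint is a linearly constrained quadratic program whose optimal value is (\ref{MSPE}).

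The key step is the embedding. Without loss of generality order the points of $\mathbf{X}_2$ so that those of $\mathbf{X}_1$ come first; then $\mathbf{H}(\mathbf{X}_1)$ is the corresponding block of rows of $\mathbf{H}(\mathbf{X}_2)$. Given any feasible weight vector $\boldsymbol{\lambda}_1$ for the $\mathbf{X}_1$ problem, define $\boldsymbol{\lambda}_2 = (\boldsymbol{\lambda}_1',\, \mathbf{0}')'$ by assigning zero weight to the additional points. Because the padded coordinates multiply the extra rows of $\mathbf{H}(\mathbf{X}_2)$ by zero, the unbiasedness constraint is preserved, $\mathbf{H}(\mathbf{X}_2)'\boldsymbol{\lambda}_2 = \mathbf{H}(\mathbf{X}_1)'\boldsymbol{\lambda}_1 = \mathbf{h}(\mathbf{x})$, and the predictor itself is unchanged, $\boldsymbol{\lambda}_2' f(\mathbf{X}_2) = \boldsymbol{\lambda}_1' f(\mathbf{X}_1)$ as random variables, so the two predictors have literally the same mean squared error. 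Consequently the feasible set of the $\mathbf{X}_2$ program contains, via zero-padding, a copy of the $\mathbf{X}_1$ feasible set on which the objective takes the same values, and minimizing over the larger set can only decrease the optimum. Identifying both optima with (\ref{MSPE}) gives the claim; since the argument is carried out at a fixed input $\mathbf{x}$, the inequality holds pointwise.

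I expect the main obstacle to be bookkeeping around the regression term rather than anything deep: one must verify that unbiasedness, not merely the predictor value, survives the zero-padding, and one must address existence of the BLUP. The latter requires $\mathbf{h}(\mathbf{x})$ to lie in the row space of $\mathbf{H}(\mathbf{X}_1)$, equivalently that the constraint set is nonempty and $\mathbf{H}(\mathbf{X}_1)$ has full column rank; when this fails for $\mathbf{X}_1$ the inequality is vacuous, and when it holds for $\mathbf{X}_1$ it holds a fortiori for the superset $\mathbf{X}_2$. A reader preferring a self-contained matrix argument could instead partition the bordered matrix in (\ref{MSPE}) into $\mathbf{X}_1$ and new-point blocks and apply the Schur-complement formula, so that $\mathrm{MSPE}_1 - \mathrm{MSPE}_2$ emerges as a quadratic form in the Schur complement of the new-point block, nonnegative by positive definiteness of $\Psi_{\boldsymbol{\theta}}$. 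That route is correct but computationally heavier, so the variational argument is the one I would write up.
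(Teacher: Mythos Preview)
Your proposal is correct and takes a genuinely different route from the paper. The paper does exactly the computation you relegate to your final paragraph: it partitions the bordered matrix in (\ref{MSPE}) into $\mathbf{X}_1$ and $\mathbf{X}_2^*=\mathbf{X}_2\setminus\mathbf{X}_1$ blocks, applies partitioned-inverse identities to obtain
\[
\mathrm{MSPE}_2=\mathrm{MSPE}_1-(\mathbf{a}_2-\mathbf{B}_{21}\mathbf{B}_{11}^{-1}\mathbf{a}_1)'\,\mathbf{B}_{22\cdot 1}^{-1}\,(\mathbf{a}_2-\mathbf{B}_{21}\mathbf{B}_{11}^{-1}\mathbf{a}_1),
\]
and then works to show $\mathbf{B}_{22\cdot 1}\succeq 0$ by a second application of partitioned inverses and recognition of a conditional covariance. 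Your variational argument sidesteps all of this: once one accepts that (\ref{MSPE}) is the optimum of a linearly constrained quadratic program, the zero-padding embedding immediately nests the two feasible sets and the inequality is automatic. The price is that you do not obtain the paper's explicit formula for the gap $\mathrm{MSPE}_1-\mathrm{MSPE}_2$, which could matter if one later wanted to quantify the benefit of the added points; the gain is a shorter, more conceptual proof that requires no Schur-complement bookkeeping and makes the full-rank hypothesis on $\mathbf{H}(\mathbf{X}_1)$ transparent rather than buried in an implicit invertibility assumption.
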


Notably, this result states that the \emph{expected} squared prediction error is always reduced by the addition of data. 
Certainly, for a particular combination of GP draw $f$ and new design location(s), or for a mis-specified model, the \emph{actual} squared prediction error at a location of interest could in fact be increased by the addition of data.

Consider controlling the inner part of the bound on the nominal error given by the first term of (\ref{decomposition2}). The MSPE in the inner part of the nominal error is given by (\ref{MSPE}).
Applying partitioned matrix inverse results, (\ref{MSPE}) can be rewritten as
\begin{equation}
\begin{split}
&\mathbb{E}\left\{f({\bf x})-\hat{f}_{\boldsymbol{\vartheta}}({\bf x})\right\}^2\\
&=\Psi_{\boldsymbol{\theta}}({\bf x},{\bf x})-\Psi_{\boldsymbol{\theta}}({\bf x},{\bf X})\Psi_{\boldsymbol{\theta}}({\bf X},{\bf X})^{-1}\Psi_{\boldsymbol{\theta}}({\bf X},{\bf x})\\
&\quad + \left({\bf h(x)}-{\bf H}({\bf X})'\Psi_{\boldsymbol{\theta}}({\bf X},{\bf X})^{-1}\Psi_{\boldsymbol{\theta}}({\bf X},{\bf x})\right)'  \left({\bf H}({\bf X})'\Psi_{\boldsymbol{\theta}}({\bf X},{\bf X})^{-1}{\bf H}({\bf X})\right)^{-1} \\
&\qquad \times \left({\bf h}({\bf x})-{\bf H}({\bf X})'\Psi_{\boldsymbol{\theta}}({\bf X},{\bf X})^{-1}\Psi_{\boldsymbol{\theta}}({\bf X},{\bf x})\right). \label{MSPErewritten}
\end{split}
\end{equation}
Initially, the uppermost terms in (\ref{MSPErewritten}), which provide the MSPE for a model with mean zero or no regression functions, are bounded.
We make use of the below theorem which bounds the uppermost terms of the MSPE (\ref{MSPErewritten}) in terms of local bounds. The proof of Theorem \ref{nominalTheorem} is provided in Appendix \ref{nominalTheoremProof}.

\begin{thm}\label{nominalTheorem}
If $A_i,\;i=1,\ldots,n$ is a \emph{covering} of $\Omega$ in the sense that $\Omega\subseteq\cup_{i=1}^n A_i$, $\Psi_{\boldsymbol{\theta}}$ is a positive definite function with $\Psi_{\boldsymbol{\theta}}({\bf x},{\bf x})=\sigma^2$ for all ${\bf x}\in\Omega$, and ${\bf x}_i\in A_i$ for ${\bf x}_i\in{\bf X}$, then
\begin{gather}
\begin{split}
&\sup_{{\bf x}\in\Omega}\Psi_{\boldsymbol{\theta}}({\bf x},{\bf x})-\Psi_{\boldsymbol{\theta}}({\bf x},{\bf X})\Psi_{\boldsymbol{\theta}}({\bf X},{\bf X})^{-1}\Psi_{\boldsymbol{\theta}}({\bf X},{\bf x})\\
&\le\frac{1}{k}\left(\sigma^2-\min_i\inf_{{\bf x}\in A_i}\Psi_{\boldsymbol{\theta}}({\bf x}_i,{\bf x})\right)\left(2k-\sigma^2+\min_i\inf_{{\bf x}\in A_i}\Psi_{\boldsymbol{\theta}}({\bf x}_i,{\bf x})\right),\nonumber
\end{split}
\end{gather}
where $k=n\sup_{{\bf u},{\bf v}\in\Omega}\Psi_{\boldsymbol{\theta}}({\bf u},{\bf v})$.
\end{thm}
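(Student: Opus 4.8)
The plan is to recognize the quantity being bounded as the simple-kriging (mean-zero) prediction variance $\sigma^2-\Psi_{\boldsymbol{\theta}}(\mathbf{x},\mathbf{X})\Psi_{\boldsymbol{\theta}}(\mathbf{X},\mathbf{X})^{-1}\Psi_{\boldsymbol{\theta}}(\mathbf{X},\mathbf{x})$, which for a mean-zero GP equals $\min_{\mathbf{c}\in\mathbb{R}^n}\mathbb{E}(Z(\mathbf{x})-\mathbf{c}'Z(\mathbf{X}))^2$. Consequently any fixed linear predictor furnishes an upper bound, and the covering hypothesis tells us which predictor to use: every $\mathbf{x}\in\Omega$ lies in some cell $A_i$, whose design point $\mathbf{x}_i$ satisfies $\Psi_{\boldsymbol{\theta}}(\mathbf{x},\mathbf{x}_i)\ge m:=\min_i\inf_{\mathbf{x}\in A_i}\Psi_{\boldsymbol{\theta}}(\mathbf{x}_i,\mathbf{x})$. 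I would therefore predict $Z(\mathbf{x})$ from the single nearest site $\mathbf{x}_i$ and then quantify the improvement available from the remaining sites.

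Writing $K=\Psi_{\boldsymbol{\theta}}(\mathbf{X},\mathbf{X})$, $\mathbf{k}=\Psi_{\boldsymbol{\theta}}(\mathbf{X},\mathbf{x})$, and $s=\sigma^2-\Psi_{\boldsymbol{\theta}}(\mathbf{x},\mathbf{x}_i)\ge 0$, the unit-weight predictor $\mathbf{c}=\mathbf{e}_i$ has error $\mathbb{E}(Z(\mathbf{x})-Z(\mathbf{x}_i))^2=2s$, and a short computation shows that the true kriging variance falls below this by the nonnegative quadratic form $\mathbf{r}'K^{-1}\mathbf{r}$, where $\mathbf{r}=\mathbf{k}-K\mathbf{e}_i$ is the residual covariance vector whose $i$th entry is $r_i=\Psi_{\boldsymbol{\theta}}(\mathbf{x},\mathbf{x}_i)-\sigma^2=-s$. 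The crux is to bound this improvement from below. Since $K$ is positive definite with $\lambda_{\max}(K)\le\mathrm{tr}(K)=n\sigma^2\le k$, we have $K^{-1}\succeq k^{-1}I$, whence $\mathbf{r}'K^{-1}\mathbf{r}\ge k^{-1}\|\mathbf{r}\|^2\ge k^{-1}r_i^2=s^2/k$. This gives the pointwise bound $\sigma^2-\mathbf{k}'K^{-1}\mathbf{k}\le 2s-s^2/k$.

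It then remains to pass to the worst case over $\Omega$. By the covering bound $\Psi_{\boldsymbol{\theta}}(\mathbf{x},\mathbf{x}_i)\ge m$ we have $s\le\sigma^2-m$, and on $[0,\sigma^2-m]$ the map $s\mapsto 2s-s^2/k$ is nondecreasing because $\sigma^2-m\le k$ (automatic once $n\ge 2$, since positive-definiteness forces $\Psi_{\boldsymbol{\theta}}\ge-\sigma^2$ and hence $m\ge-\sigma^2$). Evaluating at the endpoint gives $2(\sigma^2-m)-(\sigma^2-m)^2/k=\tfrac{1}{k}(\sigma^2-m)(2k-\sigma^2+m)$, and taking $\sup_{\mathbf{x}\in\Omega}$ completes the argument. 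As a cross-check, optimizing the scalar weight on $\mathbf{x}_i$ yields the sharper pointwise value $\sigma^2-\Psi_{\boldsymbol{\theta}}(\mathbf{x},\mathbf{x}_i)^2/\sigma^2=2s-s^2/\sigma^2$, which reproduces the same final bound after the harmless weakening $1/\sigma^2\ge 1/k$; this also explains why $k$ rather than $\sigma^2$ appears in the statement.

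The main obstacle is the refinement step: recognizing that the slack between the unit-weight predictor and the BLUP is exactly $\mathbf{r}'K^{-1}\mathbf{r}$ and then controlling it through the trace/eigenvalue bound $\lambda_{\max}(K)\le k$. A secondary subtlety is that the minimal local correlation $m$ may be negative, so one cannot simply square it; casting the estimate in the product form $(\sigma^2-m)(2k-\sigma^2+m)$, equivalently $2s-s^2/k$, keeps the bound monotone in $s$ and valid for either sign of $m$, provided $\sigma^2-m\le k$.
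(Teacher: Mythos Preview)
Your argument is correct and is essentially the paper's own proof: the identity you derive, that the kriging variance equals $2s-\mathbf{r}'K^{-1}\mathbf{r}$ with $\mathbf{r}=\Psi_{\boldsymbol{\theta}}(\mathbf{X},\mathbf{x})-\Psi_{\boldsymbol{\theta}}(\mathbf{X},\mathbf{x}_i)$, is exactly what the paper obtains by direct algebraic expansion using $K^{-1}\Psi_{\boldsymbol{\theta}}(\mathbf{X},\mathbf{x}_i)=\mathbf{e}_i$, and the subsequent eigenvalue/single-component bound and quadratic monotonicity are the same. The only cosmetic differences are your statistical (BLUP-optimality) framing versus the paper's algebraic one, and your use of the trace bound $\lambda_{\max}(K)\le n\sigma^2\le k$ in place of the paper's Gershgorin bound $\lambda_{\max}(K)\le n\sup_{\mathbf{u},\mathbf{v}}\Psi_{\boldsymbol{\theta}}(\mathbf{u},\mathbf{v})=k$.
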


While Theorem \ref{nominalTheorem} only assumes $\Psi_{\boldsymbol{\theta}}$ is a positive definite function with $\Psi_{\boldsymbol{\theta}}({\bf x},{\bf x})=\sigma^2$, we examine special two cases of particular interest. 
The first covers many situations where stationarity is assumed, while the second considers a model of non-stationarity in correlation, adapted from \cite{ba}. 
We will label these cases respectively as the {Stationary Model} and {Non-Stationary Model}.
While the considered model of non-stationarity is certainly not all inclusive, it forms a quality approximation in many practical situations.
In the context of this model of non-stationarity, the correlation consists of more rapidly and more slowly decaying components with their weights differing across the input space.
Below, $\varphi(\cdot)$ is taken to be a decreasing function of its non-negative argument.

\subsection{Stationary Model}\label{nominalStationary}

Suppose 
$\Psi_{\boldsymbol{\theta}}({\bf u},{\bf v})=\sigma^2\varphi(\|\boldsymbol{\Theta}({\bf u}-{\bf v})\|_2)$.
Theorem \ref{nominalTheorem} can be used to write the overall bound on the uppermost terms of the MSPE (\ref{MSPErewritten}) in terms of
a \emph{Voronoi} covering \cite{aurenhammer} of $\Omega$ with respect to a \emph{Mahalanobis}-like distance \cite{mahalanobis} ${\rm d}_{\boldsymbol{\Theta}}({\bf u},{\bf v})=\|\boldsymbol{\Theta}({\bf u}-{\bf v})\|_2$ as
\begin{gather}
\frac{\sigma^2}{k}\left(1-\varphi\left(\max_i\sup_{{\bf x}\in V_i(\boldsymbol{\Theta})}{\rm d}_{\boldsymbol{\Theta}}({\bf x}_i,{\bf x})\right)\right)\left(2k-1+\varphi\left(\max_i\sup_{{\bf x}\in V_i(\boldsymbol{\Theta})}{\rm d}_{\boldsymbol{\Theta}}({\bf x}_i,{\bf x})\right)\right),\label{stationaryboundi}
\end{gather}
where $V_i(\boldsymbol{\Theta})=\{{\bf x}\in\Omega\colon{\rm d}_{\boldsymbol{\Theta}}({\bf x},{\bf x}_i)\le {\rm d}_{\boldsymbol{\Theta}}({\bf x},{\bf x}_j)\forall j\ne i\}$ and $k=n\varphi(0)$.

Note that 
\begin{gather}
\max_i \sup_{{\bf x}\in V_i(\boldsymbol{\Theta})}{\rm d}_{\boldsymbol{\Theta}}({\bf x}_i,{\bf x})=\sup_{{\bf x}\in\Omega}\min_i{\rm d}_{\boldsymbol{\Theta}}({\bf x}_i,{\bf x}),\label{filldistance}
\end{gather}
is the \emph{fill distance} with respect to the distance ${\rm d}_{\boldsymbol{\Theta}}$.
So, the supremum of the MSPE over possible inputs, for a GP model with mean zero, can be controlled by demanding that the (potentially) non-spherical fill distance (\ref{filldistance}) is small.
Further, the upper bound (\ref{stationaryboundi}) is minimized if it is achieved uniformly for $i=1,\ldots,n$. 
Importantly, 
a uniform bound on the terms (\ref{stationaryboundi}) is achieved by an experimental design ${\bf X}$ for which all the $\sup_{{\bf x}\in V_i(\boldsymbol{\Theta})}{\rm d}_{\boldsymbol{\Theta}}({\bf x}_i,{\bf x})$ are the same. That is, all the Voronoi cells have the same maximum distance with respect to ${\rm d}_{\boldsymbol{\Theta}}$ from their data point to their edge.

\subsection{Non-Stationary Model}\label{nominalNonStationary}

Suppose $\Psi_{\boldsymbol{\theta}}({\bf u},{\bf v})=\sigma^2\left(\omega_1({\bf u})\omega_1({\bf v})\varphi(\|\boldsymbol{\Theta}_1({\bf u}-{\bf v})\|_2)+\omega_2({\bf u})\omega_2({\bf v})\varphi(\|\boldsymbol{\Theta}_2({\bf u}-{\bf v})\|_2)\right)$.
For the Non-Stationary Model,  assume $\omega_1(\cdot),\omega_2(\cdot)\ge 0$ have Lipschitz continuous derivatives on $\Omega$, $\omega^2_1(\cdot)+\omega^2_2(\cdot)=1$, $\boldsymbol{\Theta}_1,\boldsymbol{\Theta}_2$ are non-singular, and $\lambda_{\rm max}(\boldsymbol{\Theta}'_1 \boldsymbol{\Xi}'_2 \boldsymbol{\Xi}_2 \boldsymbol{\Theta}_1)< 1$, where $\boldsymbol{\Xi}_2=\boldsymbol{\Theta}_2^{-1}$.
The final assumption can be interpreted as $\varphi(\|\boldsymbol{\Theta}_2(\cdot-\cdot)\|_2)$ is narrower than $\varphi(\|\boldsymbol{\Theta}_1(\cdot-\cdot)\|_2)$.
Throughout, we use the notation $\lambda_{\rm max}(\cdot)$ and $\lambda_{\rm min}(\cdot)$ for the maximum and minimum eigenvalues of their (diagonalizable) arguments.
Consider the covering of $\Omega$,
$V^*_i=V_i(\boldsymbol{\Theta}_1)\cup V_i(\boldsymbol{\Theta}_2),\;i=1,\ldots,n$.
Note that $V_i(\boldsymbol{\Theta}_1)$ and $V_i(\boldsymbol{\Theta}_2)$ often do not differ strongly. For example, if $\boldsymbol{\Theta}_2=c\boldsymbol{\Theta}_1$, then $V_i(\boldsymbol{\Theta}_1)=V_i(\boldsymbol{\Theta}_2)$.

Here, take a version of the upper bound in Theorem \ref{nominalTheorem} given by inserting 
a slightly reduced argument  
into the decreasing for $y\ge 0$ function $g(y)=\frac{\sigma^2}{k}(1-y)(2k-1+y)$.
First, note that
\begin{gather}
\begin{split}
&\inf_{{\bf x}\in V^*_i}\left\{\omega_1({\bf x}_i)\omega_1({\bf x})\varphi(\|\boldsymbol{\Theta}_1({\bf x}_i-{\bf x})\|_2)+\omega_2({\bf x}_i)\omega_2({\bf x})\varphi(\|\boldsymbol{\Theta}_2({\bf x}_i-{\bf x})\|_2)\right\}\\
&\ge\inf_{{\bf x}\in V^*_i}\left\{\omega_1({\bf x}_i)\omega_1({\bf x})\varphi\left(\sup_{{\bf x}\in V^*_i}{\rm d}_{\boldsymbol{\Theta}_1}({\bf x}_i,{\bf x})\right)+\omega_2({\bf x}_i)\omega_2({\bf x})\varphi\left(\sup_{{\bf x}\in V^*_i}{\rm d}_{\boldsymbol{\Theta}_2}({\bf x}_i,{\bf x})\right)\right\}.\label{lowerbound1}
\end{split}
\end{gather}
The Lipschitz derivatives of $\omega_1(\cdot),\omega_2(\cdot)$ and Taylor's theorem \cite{nocedal} imply
$\omega_1({\bf x})=\omega_1({\bf x}_i)+R_1({\bf x},{\bf x}_i)$ and
$\omega_2({\bf x})=\omega_2({\bf x}_i)+R_2({\bf x},{\bf x}_i),$
where $|R_1({\bf x},{\bf x}_i)|\le k_1\|{\bf x}_i-{\bf x}\|_2$ and $|R_2({\bf x},{\bf x}_i)|\le k_2\|{\bf x}_i-{\bf x}\|_2$.
The bound (\ref{lowerbound1}) can, in turn, be bounded below as
\begin{gather}
\begin{split}
&\omega^2_1({\bf x}_i)\varphi\left(\sup_{{\bf x}\in V^*_i}{\rm d}_{\boldsymbol{\Theta}_1}({\bf x}_i,{\bf x})\right)+\omega^2_2({\bf x}_i)\varphi\left(\sup_{{\bf x}\in V^*_i}{\rm d}_{\boldsymbol{\Theta}_2}({\bf x}_i,{\bf x})\right)\\
&-\varphi(0)(k_1+k_2)\max_i\sup_{{\bf x}\in V^*_i}\|{\bf x}_i-{\bf x}\|_2,\label{lowerbound2}
\end{split}
\end{gather}
where for tractability the final term in (\ref{lowerbound2}) is bounded 
uniformly across the design space.
Next, consider an experimental design for which the bounds
(\ref{lowerbound2})
are uniform over $i$.
One might expect that regions of the design space with less weight on the global, long range, correlation $\varphi(\|\boldsymbol{\Theta}_1(\cdot-\cdot)\|_2)$ and more weight on the local, short range, correlation $\varphi(\|\boldsymbol{\Theta}_2(\cdot-\cdot)\|_2)$ would require more closely spaced design points, and \emph{vice versa}. 
Roughly speaking this expectation holds true.

Consider two design points ${\bf x}_i$ and ${\bf x}_j$ along with corresponding (\emph{union of}) Voronoi cell sizes 
$\sup_{{\bf x}\in V^*_i}{\rm d}_{\boldsymbol{\Theta}_1}({\bf x}_i,{\bf x})$,
$\sup_{{\bf x}\in V^*_i}{\rm d}_{\boldsymbol{\Theta}_2}({\bf x}_i,{\bf x})$,
$\sup_{{\bf x}\in V^*_j}{\rm d}_{\boldsymbol{\Theta}_1}({\bf x}_j,{\bf x})$, and
$\sup_{{\bf x}\in V^*_j}{\rm d}_{\boldsymbol{\Theta}_2}({\bf x}_j,{\bf x})$.
Suppose that the points in the input space \emph{near} ${\bf x}_i$ have more weight on the global, long range, correlation than the points in the input space \emph{near} ${\bf x}_j$ and the points in the input space \emph{near} ${\bf x}_j$ have more weight on the local, short range, correlation than the points in the input space \emph{near} ${\bf x}_i$, in the sense that
\begin{gather}
\begin{split}
&(-1)^{k-1}\omega_k({\bf x}_i)^2\left(\varphi\left(\sup_{{\bf x}\in V^*_i}{\rm d}_{\boldsymbol{\Theta}_1}({\bf x}_i,{\bf x})\right)-\varphi\left(\sup_{{\bf x}\in V^*_i}{\rm d}_{\boldsymbol{\Theta}_2}({\bf x}_i,{\bf x})\right)\right)\\
&\quad\ge(-1)^{k-1}\omega_k({\bf x}_j)^2\left(\varphi\left(\sup_{{\bf x}\in V^*_j}{\rm d}_{\boldsymbol{\Theta}_1}({\bf x}_j,{\bf x})\right)-\varphi\left(\sup_{{\bf x}\in V^*_j}{\rm d}_{\boldsymbol{\Theta}_2}({\bf x}_j,{\bf x})\right)\right),
\label{nonstationarityAssumption}
\end{split}
\end{gather}
for $k=1,2$. The $(-1)^{k-1}$ terms just mean that the direction of inequality depends on $k$.
Uniformity of the bounds (\ref{lowerbound2}) along with $\omega^2_1(\cdot)+\omega^2_2(\cdot)=1$ implies
\begin{gather}
\begin{split}
&\omega_1({\bf x}_i)^2\left(\varphi\left(\sup_{{\bf x}\in V^*_i}{\rm d}_{\boldsymbol{\Theta}_1}({\bf x}_i,{\bf x})\right)-\varphi\left(\sup_{{\bf x}\in V^*_i}{\rm d}_{\boldsymbol{\Theta}_2}({\bf x}_i,{\bf x})\right)\right)\\
&\quad-
\omega_1({\bf x}_j)^2\left(\varphi\left(\sup_{{\bf x}\in V^*_j}{\rm d}_{\boldsymbol{\Theta}_1}({\bf x}_j,{\bf x})\right)-\varphi\left(\sup_{{\bf x}\in V^*_j}{\rm d}_{\boldsymbol{\Theta}_2}({\bf x}_j,{\bf x})\right)\right)\\
&\quad\quad=\varphi\left(\sup_{{\bf x}\in V^*_j}{\rm d}_{\boldsymbol{\Theta}_2}({\bf x}_j,{\bf x})\right)-\varphi\left(\sup_{{\bf x}\in V^*_i}{\rm d}_{\boldsymbol{\Theta}_2}({\bf x}_i,{\bf x})\right)\quad{\rm and}\\
&\omega_2({\bf x}_j)^2\left(\varphi\left(\sup_{{\bf x}\in V^*_j}{\rm d}_{\boldsymbol{\Theta}_1}({\bf x}_j,{\bf x})\right)-\varphi\left(\sup_{{\bf x}\in V^*_j}{\rm d}_{\boldsymbol{\Theta}_2}({\bf x}_j,{\bf x})\right)\right)\\
&\quad-
\omega_2({\bf x}_i)^2\left(\varphi\left(\sup_{{\bf x}\in V^*_i}{\rm d}_{\boldsymbol{\Theta}_1}({\bf x}_i,{\bf x})\right)-\varphi\left(\sup_{{\bf x}\in V^*_i}{\rm d}_{\boldsymbol{\Theta}_2}({\bf x}_i,{\bf x})\right)\right)\\
&\quad\quad=\varphi\left(\sup_{{\bf x}\in V^*_j}{\rm d}_{\boldsymbol{\Theta}_1}({\bf x}_j,{\bf x})\right)-\varphi\left(\sup_{{\bf x}\in V^*_i}{\rm d}_{\boldsymbol{\Theta}_1}({\bf x}_i,{\bf x})\right).\label{uniformity}
\end{split} 
\end{gather}
Combining (\ref{nonstationarityAssumption}) with (\ref{uniformity}) gives
\begin{gather}
\begin{split}
&\sup_{{\bf x}\in V^*_j}{\rm d}_{\boldsymbol{\Theta}_1}({\bf x}_j,{\bf x})\le
\sup_{{\bf x}\in V^*_i}{\rm d}_{\boldsymbol{\Theta}_1}({\bf x}_i,{\bf x})\quad{\rm and}\quad
\sup_{{\bf x}\in V^*_j}{\rm d}_{\boldsymbol{\Theta}_2}({\bf x}_j,{\bf x})\le
\sup_{{\bf x}\in V^*_i}{\rm d}_{\boldsymbol{\Theta}_2}({\bf x}_i,{\bf x}),\nonumber
\end{split}
\end{gather}
since $\varphi$ is a decreasing function of its non-negative argument.
That is, a uniform bound on (\ref{lowerbound2}) is achieved by an experimental design ${\bf X}$ which has smaller (\emph{union of}) Voronoi cells, with respect to either ${\rm d}_{\boldsymbol{\Theta}_1}$ or ${\rm d}_{\boldsymbol{\Theta}_2}$, in regions with more emphasis on the local, more quickly decaying, correlation and less emphasis on the global, more slowly decaying, correlation. Note that the global ($k=1$) and local ($k=2$) \emph{emphases} at ${\bf x}_i$ are given concretely by
\begin{gather}
\begin{split} 
&\omega_k({\bf x}_i)^2\left(\varphi\left(\sup_{{\bf x}\in V^*_i}{\rm d}_{\boldsymbol{\Theta}_1}({\bf x}_i,{\bf x})\right)-\varphi\left(\sup_{{\bf x}\in V^*_i}{\rm d}_{\boldsymbol{\Theta}_2}({\bf x}_i,{\bf x})\right)\right)\quad k=1,2.
\nonumber 
\end{split}
\end{gather}

The assumption that $\omega_1,\omega_2$ have Lipschitz continuous derivatives on $\Omega$, while not overly restrictive in most practical situations, is not necessary, in principle.
Without this assumption, the bilinear form on the right-hand side of (\ref{lowerbound1}) can be bounded below via one of several reverses of the Cauchy-Schwarz inequality \cite{dragomir}.
Many of these results provide a lower bound for (\ref{lowerbound1}) in terms of a geometric mean across ${\bf x}$ and ${\bf x}_i$ of terms similar to (\ref{lowerbound2}), without the terms involving the Lipschitz constants, 
and in turn more complex development for the supremum of the uppermost terms of the MSPE (\ref{MSPErewritten}).
Balancing simplicity and broad applicability, these type of results are not pursued here.

\subsection{Regression Functions}\label{nominalRegFunc}

Subsections \ref{nominalStationary} (Stationary Model) and \ref{nominalNonStationary} (Non-Stationary Model)
relate to the uppermost terms in equation (\ref{MSPErewritten}), which without further development provides the MSPE for a model with mean zero. Now, we consider the lowermost terms in (\ref{MSPErewritten}), which are relevant for Gaussian process models with a mean or non-null regression component.
The lowermost terms in (\ref{MSPErewritten}) can be bounded above as
\begin{gather}
\begin{split}
&\left( {\bf h(x)}-{\bf H(X)}'\Psi_{\boldsymbol{\theta}}({\bf X,X})^{-1}\Psi_{\boldsymbol{\theta}}({\bf X,x}) \right)'
\left( {\bf H(X)} '\Psi_{\boldsymbol{\theta}}({\bf X,X})^{-1}{\bf H(X)} \right)^{-1}\\
&\quad \times \left( {\bf h(x)}-{\bf H(X)}'\Psi_{\boldsymbol{\theta}}({\bf X,X})^{-1}\Psi_{\boldsymbol{\theta}}({\bf X,x}) \right)\\
&\le\left.\left\|{\bf h(x)}-{\bf H(X)}'\Psi_{\boldsymbol{\theta}}({\bf X,X})^{-1}\Psi_{\boldsymbol{\theta}}({\bf X,x})\right\|_2^2\right/
\lambda_{\rm min}\left( {\bf H(X)}'\Psi_{\boldsymbol{\theta}}({\bf X,X})^{-1}{\bf H(X)} \right)\\
&\le\left.n\sup_{{\bf u},{\bf v}\in\Omega}\Psi_{\boldsymbol{\theta}}({\bf u},{\bf v})\left\|{\bf h}({\bf x})-{\bf H}({\bf X})'\Psi_{\boldsymbol{\theta}}({\bf X},{\bf X})^{-1}\Psi_{\boldsymbol{\theta}}({\bf X},{\bf x})\right\|_2^2\right/\lambda_{\rm min}\left({\bf H}({\bf X})'{\bf H}({\bf X})\right).\label{lowermostbound}
\end{split}
\end{gather}
The first inequality is true because ${\bf a}'{\bf B}^{-1}{\bf a}\le\lambda_{\rm max}({\bf B}^{-1})\|{\bf a}\|_2^2$ and $\lambda_{\rm max}({\bf B}^{-1})=1/\lambda_{\rm min}({\bf B})$ and
the second inequality is true because $\lambda_{\rm min}({\bf A}'{\bf B}^{-1}{\bf A})\ge
\lambda_{\rm min}({\bf A}'{\bf A})/\lambda_{\rm max}({\bf B})$ and as implication (\ref{gershgorin}) of Gershgorin's theorem \cite{varga}. Note that the term $n\sup_{{\bf u},{\bf v}\in\Omega}\Psi_{\boldsymbol{\theta}}({\bf u},{\bf v})$ does not depend on the experimental design.

The components of the squared Euclidean norm $\left\|{\bf h}({\bf x})-{\bf H}({\bf X})'\Psi_{\boldsymbol{\theta}}({\bf X},{\bf X})^{-1}\Psi_{\boldsymbol{\theta}}({\bf X},{\bf x})\right\|_2^2$ are squared errors for an interpolator of the regression functions. Intuitively, we might expect these squared interpolation errors to behave in a manner similar to the MSPE for the Gaussian process model with mean zero. In fact, it has been shown above that if the regression functions are draws from a Gaussian process with mean zero and a covariance structure as described in 
Subsections \ref{nominalStationary} and \ref{nominalNonStationary},
then the expectation of these squared errors can be controlled through the experimental design as described above. That is, an experimental design which gives low nominal error in the mean zero case will also make the term $\left\|{\bf h}({\bf x})-{\bf H}({\bf X})'\Psi_{\boldsymbol{\theta}}({\bf X},{\bf X})^{-1}\Psi_{\boldsymbol{\theta}}({\bf X},{\bf x})\right\|_2^2$ small.

Alternatively, a reproducing kernel Hilbert space (RKHS) \cite{aronszajn, wendland2005} may be defined as the completion of the function space 
spanned by $\{\Psi_{\boldsymbol{\theta}}({\bf x}_i,\cdot)\colon {\bf x}_i\in\Omega\}$ with respect to the inner product $\langle\sum_i a_i\Psi_{\boldsymbol{\theta}}({\bf x}_i,\cdot),\sum_j b_j\Psi_{\boldsymbol{\theta}}({\bf y}_j,\cdot)\rangle=\sum_{i,j}a_i b_j\Psi_{\boldsymbol{\theta}}({\bf x}_i,{\bf y}_j)$.
Many commonly selected regression functions, for example constant, linear, polynomial, and spline, will also lie in the RKHSs induced by many of the common covariance functions. For example, the Gaussian kernel induces an RKHS of functions with infinitely many continuous derivatives and Mat\'ern kernels induce RKHSs of functions with a fixed number of continuous derivatives.
If the selected regression functions lie in the RKHS induced by the chosen covariance function, then deterministic RKHS interpolation error bounds as a decreasing function of the fill distance, such as Theorem 5.1 in \cite{haaland2011}, can be applied. Another alternative would be to choose as regression functions covariance function (half) evaluations $\{\Psi_{\boldsymbol{\theta}}({\bf x}_i,\cdot)\colon i\in\mathcal{I}\}$ at a well-distributed set of centers $\mathcal{I}$. These regression functions are capable of \emph{approximating} a broad range of mean functions and have the appealing feature that the lowermost term in the bound (\ref{MSPErewritten}) is then identically zero.

The eigenvalue $\lambda_{\rm min}\left({\bf H}({\bf X})'{\bf H}({\bf X})\right)$ has approximation
\begin{gather}
\lambda_{\rm min}\left({\bf H}({\bf X})'{\bf H}({\bf X})\right)=\lambda_{\rm min}\left(\sum_{i=1}^n {\bf h}({\bf x}_i){\bf h}({\bf x}_i)'\right)\approx n\lambda_{\rm min}\left(\int {\bf h}({\bf y}){\bf h}({\bf y})'{\rm d}F({\bf y})\right)=n s_1,\nonumber
\end{gather}
where $F$ denotes the large sample distribution of the input locations ${\bf X}$, $s_1\ge 0$, and $s_1>0$ unless ${\bf h}({\bf y})'{\bf a}=0$ with probability $1$ with respect to the large sample distribution $F$ for some ${\bf a}\ne 0$.
The (approximate) term 
$s_1=\lambda_{\rm min}\left(\int {\bf h}({\bf y}){\bf h}({\bf y})'{\rm d}F({\bf y})\right)$
in the denominator of (\ref{lowermostbound}) indicates that (at least for regression functions which do not make ${\bf h}({\bf x})-{\bf H}({\bf X})'\Psi_{\boldsymbol{\theta}}({\bf X},{\bf X})^{-1}\Psi_{\boldsymbol{\theta}}({\bf X},{\bf x})\equiv {\bf 0}$), the design properties implied by the mean zero development in 
Subsections \ref{nominalStationary} and \ref{nominalNonStationary}
need to be balanced with traditional experimental design properties. Two common scenarios are of particular interest. First, consider a constant regression function, a mean parameter. In this situation, $s_1=1$ irrespective of experimental design. 
Second, consider linear regression functions in each dimension in addition to the constant. 
If each linear function is expressed on the same scale, 
$s_1$
will be large for an experimental design 
with points far from 
$\boldsymbol{\mu}=\int {\bf y}{\rm d}F({\bf y})$
the average design value, and whose orientations ${\bf x}_i-\boldsymbol{\mu}$ emphasize each basis vector in an orthonormal basis of $\mathbb{R}^d$ equally. 
For the common situation where $\Omega=[0,1]^d$, 
$s_1$
will be maximized for a design with equal numbers of points in each of the corners of $[0,1]^d$.
So, high-quality experimental designs for Gaussian process models with linear regression mean components will balance the fill distance-based criteria described 
in Subsections \ref{nominalStationary} and \ref{nominalNonStationary}
with the push of design points to the ``corners'' of $\Omega$.
Similarly, high-quality experimental designs for Gaussian process models with quadratic regression mean components will balance the fill distance-based criteria described 
in Subsections \ref{nominalStationary} and \ref{nominalNonStationary}
with the push of design points to the edges and \emph{middle} of the design space.


Example 
high quality
23 run experimental designs for the \emph{nominal} situations described in 
Subsections \ref{nominalStationary} (Stationary Model), \ref{nominalNonStationary} (Non-Stationary Model), and
the Stationary Model 
along with linear regression functions are illustrated in the left, middle, and right panels, respectively, of Figure \ref{Design}. 
For each case, $\varphi(d)={\rm exp}\{-d^2\}$.
For the Non-Stationary Model example,
$\omega_1({\bf u})^2=1-\|{\bf u}\|^2/2$, $\omega_2({\bf u})^2=\|{\bf u}\|^2/2$,
$\boldsymbol{\Theta}_1=1\cdot{\bf I}_2$, $\boldsymbol{\Theta}_2=10\cdot{\bf I}_2$, and $\sigma^2=1$, while for 
the Stationary Model
along with linear regression functions example $\boldsymbol{\Theta}=2\cdot{\bf I}_2$ and $\sigma^2=1$.
As expected, in the first panel, illustrating the stationary situation, the design points lie near a triangular lattice (subject to edge effects).
Similarly, in the second panel, illustrating the non-stationary correlation situation, the design points in the upper right, where the shorter range, more quickly decaying, correlation is emphasized, are more dense than in the lower left, where the longer range, more slowly decaying, correlation is emphasized.
Further, in the third panel, illustrating the impact of regression functions, the design points balance fill distance and a push towards the corners of the input space.

Finding designs which minimize (or nearly minimize) the error bounds is challenging. 
Here, we adopted a homotopy continuation \cite{homotopy} approach, which slowly transitions from an easier objective function to a more difficult objective function. 
Note that the respective target objectives for the stationary, non-stationary, and stationary with regression functions situations are given by minimizing equation (\ref{stationaryboundi}), maximizing equation (\ref{lowerbound2}), and minimizing the sum of equations (\ref{stationaryboundi}) and (\ref{lowermostbound}).
For the stationary situation, the optimization routine was initialized at a triangular lattice, crudely scaled for small fill distance.
The objective function is then taken as $(1-\delta)\;{\rm mean}_i \sup_{{\bf x}\in V_i(\boldsymbol{\Theta})}{\rm d}_{\boldsymbol{\Theta}}({\bf x}_i,{\bf x})+\delta\max_i \sup_{{\bf x}\in V_i(\boldsymbol{\Theta})}{\rm d}_{\boldsymbol{\Theta}}({\bf x}_i,{\bf x})$ for $\delta\in[0,1]$. 
Initially, the optimization is performed for $\delta=0$, then $\delta=1/K$, then $\delta=2/K$,and so on, up to $\delta=K/K=1$, for a moderately large number $K$. 
Notice that the \emph{target} objective function is optimized when $\delta=1$.

For both the non-stationary and stationary with regression functions situations, 
the objective function transition occurs both from mean to minimum/maximum and from stationary to non-stationary or with regression functions.
For convenience and without loss of generality, suppose one is minimizing the maximum of local bounds.
Let $Q_\delta({\bf x}_i)$ denote an objective function \emph{local} to data point ${\bf x}_i$, which is continuous as a function of $\delta$ and converges to the \emph{target} local objective function as $\delta\to 1$. 
The objective function is then taken as $(1-\delta)\,{\rm mean}_i\, Q_\delta({\bf x}_i)+\delta\,{\rm max}_i\, Q_\delta({\bf x}_i)$, and is solved repeatedly for a sequence of $\delta$ going from $0$ to $1$.
For the non-stationary case (with minus signs to express as minization), 
\begin{gather}
\begin{split}
Q_\delta({\bf x}_i)=&-\omega^2_\delta({\bf x}_i)\varphi\left(\sup_{{\bf x}\in V^*_i}{\rm d}_{\boldsymbol{\Theta}_1}({\bf x}_i,{\bf x})\right)-(1-\omega^2_\delta({\bf x}_i))\varphi\left(\sup_{{\bf x}\in V^*_i}{\rm d}_{\boldsymbol{\Theta}_2}({\bf x}_i,{\bf x})\right)\\
&+\varphi(0)(k_1+k_2)\max_i\sup_{{\bf x}\in V^*_i}\|{\bf x}_i-{\bf x}\|_2,\nonumber
\end{split}
\end{gather}
with $\omega_\delta({\bf x})=(1-\delta)+\delta\sqrt{1-\|{\bf x}\|^2_2/2}$ and Lipschitz constants $k_1=k_2=1/2$,  while for the non-null regression functions case, 
\begin{gather}
\begin{split}
Q_\delta({\bf x}_i)=&\frac{1}{n}\left(1-\varphi\left(\sup_{{\bf x}\in V_i(\boldsymbol{\Theta})}{\rm d}_{\boldsymbol{\Theta}}({\bf x}_i,{\bf x})\right)\right)\left(2n-1+\varphi\left(\sup_{{\bf x}\in V_i(\boldsymbol{\Theta})}{\rm d}_{\boldsymbol{\Theta}}({\bf x}_i,{\bf x})\right)\right)\\
&+\delta n \left.\sup_{{\bf x}\in V_i}\left\|{\bf h}({\bf x})-{\bf H}({\bf X})'\Psi_{\boldsymbol{\theta}}({\bf X},{\bf X})^{-1}\Psi_{\boldsymbol{\theta}}({\bf X},{\bf x})\right\|_2^2
\right/\lambda_{\rm min}\left({\bf H}({\bf X})'{\bf H}({\bf X})\right).\nonumber
\end{split}
\end{gather}
%
%
Nelder-Mead black box optimization along with penalties to enforce input space constraints was used throughout \cite{nocedal}.

\begin{figure}
\centering
\subfigure{\includegraphics[width=1.65in]{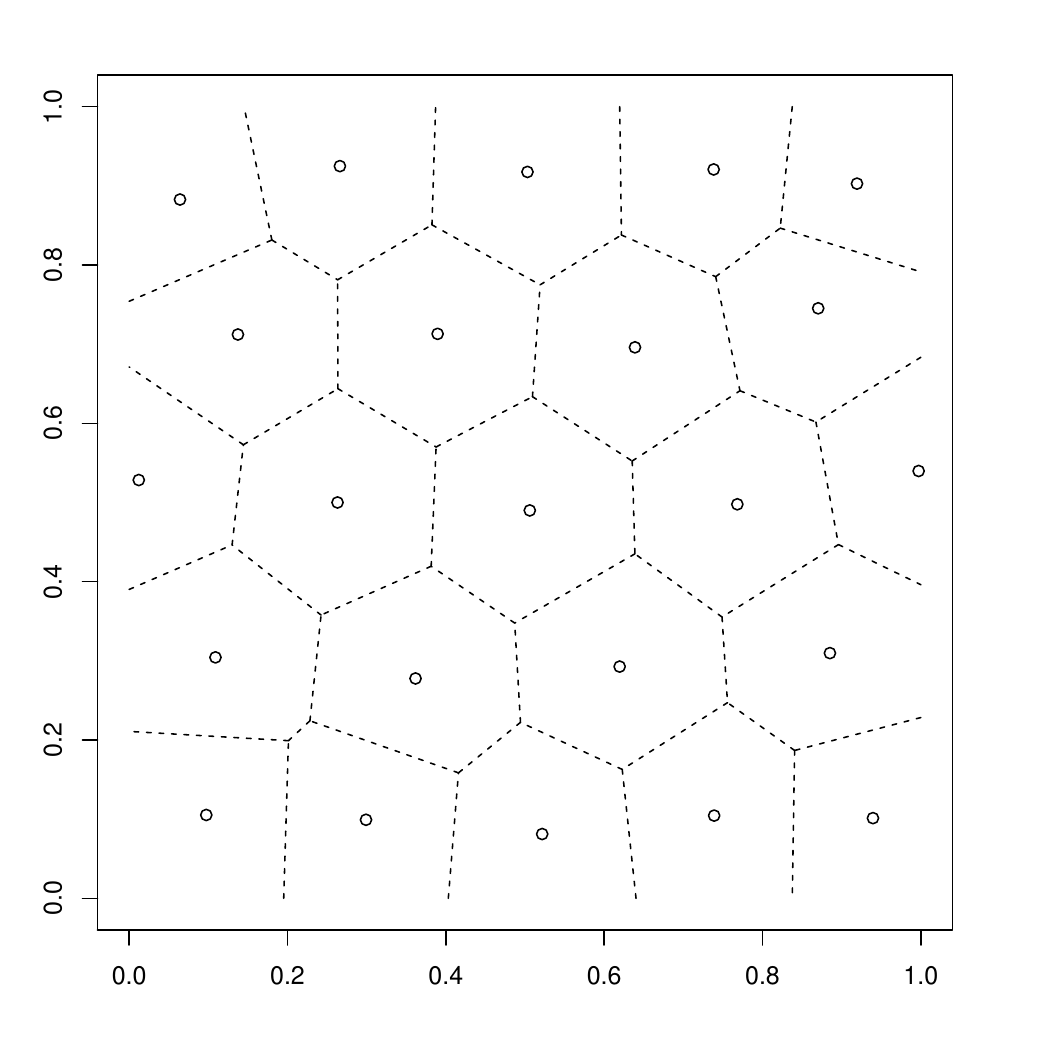}}
\subfigure{\includegraphics[width=1.65in]{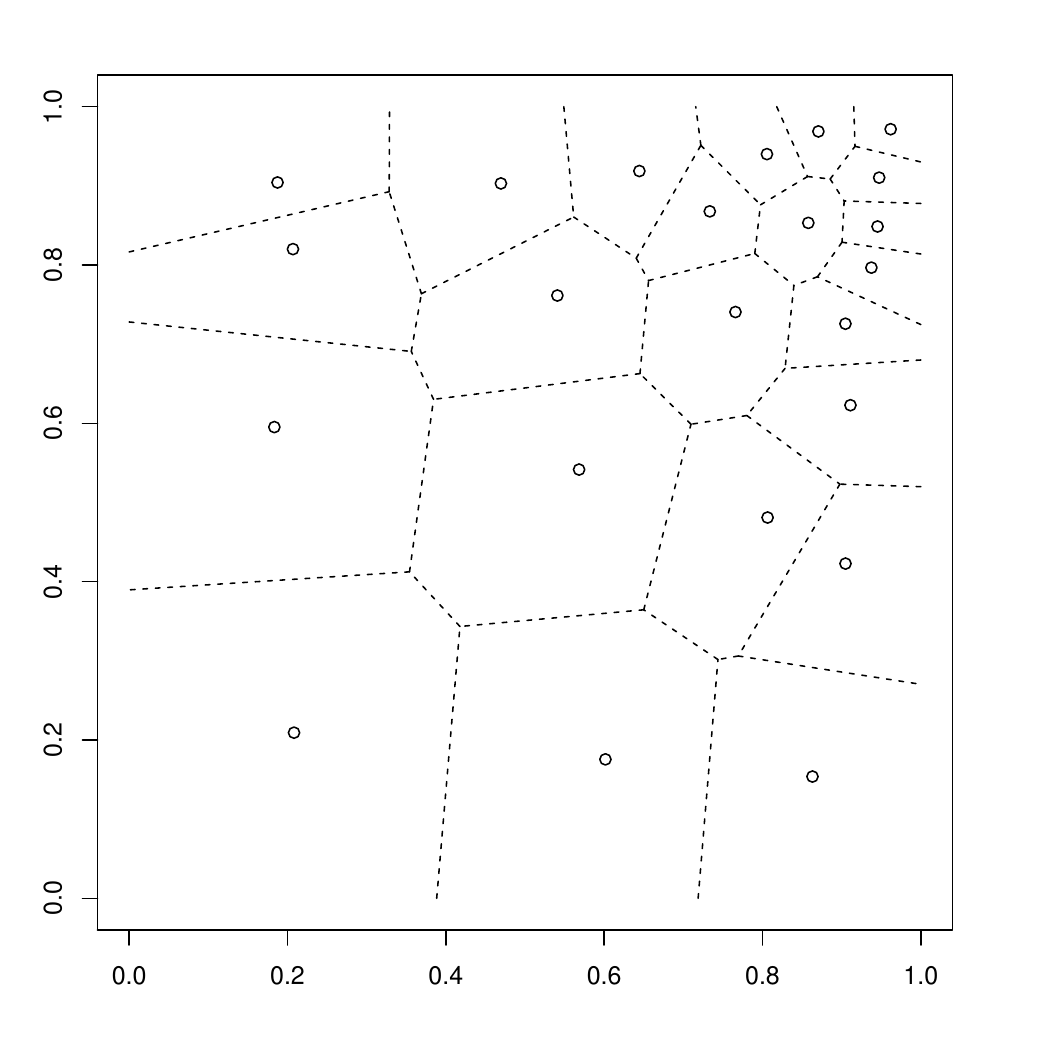}}
\subfigure{\includegraphics[width=1.65in]{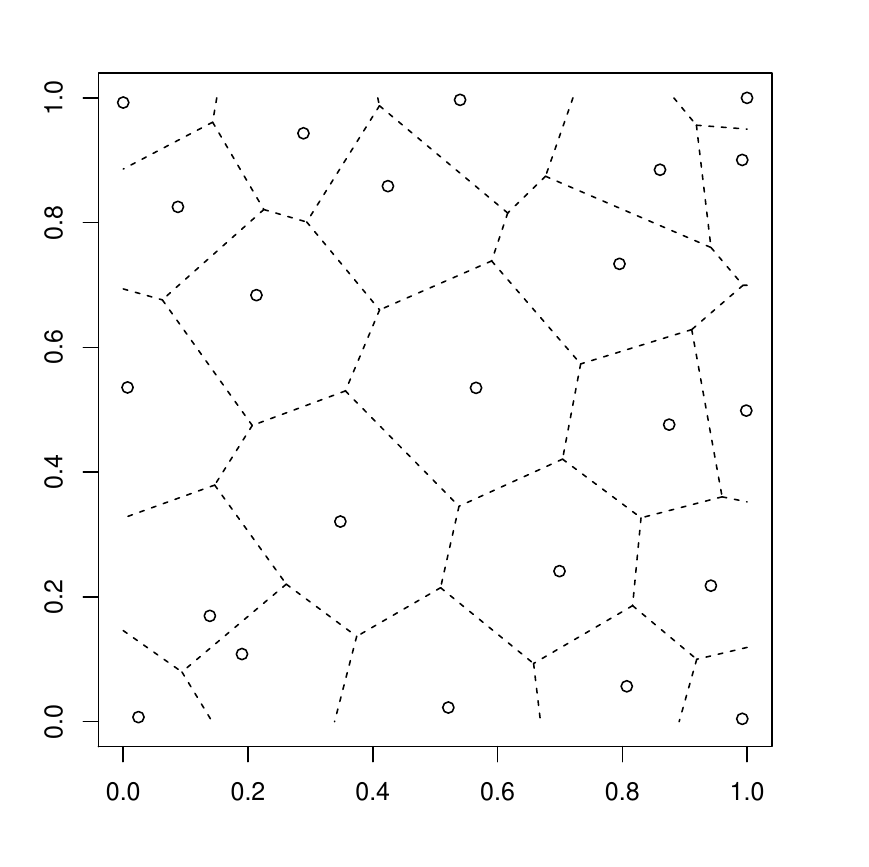}}
\caption{{\bf Left Panel:} Nominal error design for stationary correlation. {\bf Middle Panel:} Nominal error design for the Case 2 model of non-stationary correlation with $\varphi(d)={\rm exp}\{-d^2\}$, 
$\omega_1({\bf u})^2=1-\|{\bf u}\|^2/2$, $\omega_2({\bf u})^2=\|{\bf u}\|^2/2$,
$\boldsymbol{\Theta}_1=1\cdot{\bf I}_2$, $\boldsymbol{\Theta}_2=10\cdot{\bf I}_2$. {\bf Right Panel:} Nominal error design for stationary correlation and a linear regression function for each dimension.}
\label{Design}
\end{figure}

It should be noted that the approach to bounding the MSPE which was used above is not the only approach available. 
One possibility is to formulate the function approximation problem as $n$ numeric integration problems \cite{wozniakowski}, then apply numeric integration results \cite{niederreiter} to provide upper bounds on the \emph{integrated} MSPE in terms of the \emph{star discrepancy} of the point set ${\bf X}$, 
\begin{gather}
D^*({\bf X})=\sup_{J=\prod_{k=1}^d[0,u_k)}\left|\frac{\#\{{\bf x}_i\in{\bf X}:J\cap\Omega\}}{n}-\frac{{\rm vol}\;J\cap\Omega}{{\rm vol}\;\Omega}\right|,\nonumber
\end{gather}
the supremum of differences between the proportion of points in a \emph{rooted} rectangle and the proportion that are supposed to be in the rectangle under uniform measure.
The fill distance can in fact be bounded above in terms of star discrepancy.
It is conceivable that these type of results could be extended to the situation of non-stationarity by considering discrepancy with respect to a more general measure. 
However, this line of development is not pursued here.
We provide the below theorem as an indication of the type of results which are attainable. 
A proof of Theorem \ref{discrepancyThm} and related development is provided in Appendix \ref{discrepancyThmProof}.

\begin{thm}\label{discrepancyThm}
If $f\sim{\rm GP}(0,\Psi(\cdot,\cdot))$, $\Omega=[0,1]^d$, then we have bounds on the integrated mean squared prediction error 
\begin{gather}
\begin{split}
&\mathbb{E}\|f-\hat{f}\|_{L_2([0,1]^d)}^2\le\sum_{j=n+1}^\infty\lambda_j+D^*({\bf X})^2\sum_{j=1}^n\mathbb{E}V(\varphi_i f)^2,\nonumber
\end{split}
\end{gather}
under the condition that $\varphi_i f$ has finite expected squared variation in the sense of Hardy and Kraus $V(\varphi_i f)$ \cite{niederreiter}, where 
$\Psi(\cdot,\cdot)$ has eigenvalue, eigenfunction decomposition $\Psi(\cdot,\cdot)=\sum_{i=1}^\infty\lambda_i\varphi_i(\cdot)\varphi_i(\cdot)$.
\end{thm}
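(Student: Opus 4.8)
The plan is to diagonalize the problem through the Mercer / Karhunen--Lo\`eve expansion and then recognize the approximation of each coefficient as a quasi--Monte Carlo quadrature problem to which the Koksma--Hlawka inequality applies. Concretely, write $f=\sum_{j=1}^\infty\xi_j\varphi_j$, where $\{\varphi_j\}$ are the $L_2([0,1]^d)$--orthonormal eigenfunctions of the integral operator with kernel $\Psi$, and the coefficients $\xi_j=\int_{[0,1]^d}f({\bf x})\varphi_j({\bf x})\,{\rm d}{\bf x}$ are uncorrelated with $\mathbb{E}\xi_j^2=\lambda_j$. Because only the $n$ function values $f({\bf x}_1),\ldots,f({\bf x}_n)$ are available, the natural emulator in this formulation retains the first $n$ eigendirections and replaces each exact coefficient integral by its equal--weight quadrature estimate $\hat\xi_j=\frac1n\sum_{k=1}^n f({\bf x}_k)\varphi_j({\bf x}_k)$, so that $\hat{f}=\sum_{j=1}^n\hat\xi_j\varphi_j$. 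I emphasize that this is the quadrature--projection emulator implied by the ``$n$ numeric integration problems'' framing, not the BLUP of (\ref{GPinterpolator}).

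Next I would decompose the error using orthonormality of the $\varphi_j$. Writing $f-\hat{f}=\sum_{j=n+1}^\infty\xi_j\varphi_j+\sum_{j=1}^n(\xi_j-\hat\xi_j)\varphi_j$ and using that the tail lies in the $L_2$--orthogonal complement of ${\rm span}\{\varphi_1,\ldots,\varphi_n\}$, Parseval's identity gives
\begin{gather}
\|f-\hat{f}\|_{L_2([0,1]^d)}^2=\sum_{j=n+1}^\infty\xi_j^2+\sum_{j=1}^n(\xi_j-\hat\xi_j)^2.\nonumber
\end{gather}
Taking expectations and inserting $\mathbb{E}\xi_j^2=\lambda_j$ isolates the first term of the claimed bound, $\sum_{j=n+1}^\infty\lambda_j$, and leaves $\sum_{j=1}^n\mathbb{E}(\xi_j-\hat\xi_j)^2$ to be controlled.

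For the remaining coefficient errors, observe that $\xi_j-\hat\xi_j$ is exactly the quadrature error of the integrand $a_j f:=\varphi_j f$ on $[0,1]^d$ (the $a_i f$ of the statement, with the obvious reindexing); here ${\rm vol}\,\Omega=1$ and $J\cap\Omega=J$, so $D^*({\bf X})$ reduces to the ordinary star discrepancy. Applying the Koksma--Hlawka inequality pathwise yields $|\xi_j-\hat\xi_j|\le V(a_j f)\,D^*({\bf X})$ whenever $a_j f$ has bounded variation in the sense of Hardy and Krause, hence $(\xi_j-\hat\xi_j)^2\le D^*({\bf X})^2 V(a_j f)^2$. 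Taking expectations and summing over $j=1,\ldots,n$ produces the second term $D^*({\bf X})^2\sum_{j=1}^n\mathbb{E}V(a_j f)^2$, and combining with the previous paragraph gives the theorem.

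The main obstacle is measure--theoretic bookkeeping rather than any single hard inequality. I would need to justify that the Karhunen--Lo\`eve series converges in $L_2$ jointly over sample space and $\Omega$, so that the Parseval step holds almost surely; that the interchange of expectation with the infinite tail sum is legitimate, for which Tonelli's theorem suffices since the summands are nonnegative; and, most delicately, that the sample paths of $f$ are almost surely regular enough that $a_j f=\varphi_j f$ has finite Hardy--Krause variation, so that Koksma--Hlawka may be invoked path by path. The stated hypothesis that each $a_i f$ has finite \emph{expected squared} variation is precisely what renders this last step valid and the final expectation finite, and it is the point at which any smoothness assumptions on $\Psi$---entering through the eigenfunctions $\varphi_j$ and the sample--path regularity they confer---would be needed.
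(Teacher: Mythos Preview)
Your argument mirrors the paper's almost step for step: Karhunen--Lo\`eve expansion, a truncated quadrature surrogate $\hat f_*=\sum_{j=1}^n\hat\xi_j\varphi_j$ with $\hat\xi_j=\tfrac1n\sum_k a_j({\bf x}_k)f({\bf x}_k)$, Parseval to split off the tail $\sum_{j>n}\lambda_j$, and then Koksma--Hlawka applied pathwise to each coefficient error. Your identification $a_j=\varphi_j$ is exactly what the paper's Riesz-representation sentence amounts to.

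There is, however, one missing link. The theorem's $\hat f$ is the BLUP of (\ref{GPinterpolator}), not the quadrature--projection emulator you construct, and you explicitly note that you are bounding the latter. The paper closes this gap in one line: since the BLUP minimizes the (pointwise, hence integrated) mean squared prediction error over all functions of the data $\{{\bf X},f({\bf X})\}$, one has $\mathbb{E}\|f-\hat f\|_{L_2}^2\le\mathbb{E}\|f-\hat f_*\|_{L_2}^2$, and your bound on the right-hand side then applies to the left. Without this optimality step your argument establishes the inequality only for the surrogate $\hat f_*$, not for the emulator the theorem is about; adding that single sentence completes the proof.
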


A similar bound can be developed in terms of the \emph{modulus of continuity} \cite{niederreiter} of $\varphi_i f$. 
See Appendix \ref{discrepancyThmProof} for details. 

\section{Numeric Error}\label{numeric}
In Section \ref{nominal}, it has been shown that increasing the number of data points will decrease the nominal error. On the other hand, the numeric error can become arbitrarily large by the addition of new data sites. Here, we develop bounds on the numeric error in terms of properties of the experimental design by adapting and extending results in \cite{golub,wendland2005,haaland2011}. 

The numeric accuracy of Gaussian process emulation depends on the accuracy of floating point matrix manipulations.
Floating point numbers are the rounded versions that computers perform calculations with as opposed to the targeted numbers.
Commonly, computer and software have 15 digits of accuracy meaning that
$\|\tilde{x}-x\|_2/\|x\|_2\le 10^{-15},$
where $x$ denotes the actual value and $\tilde{x}$ denotes the value that the computer stores.
We state a few typical assumptions on computer and software floating point accuracy and provide the following proposition relating numeric accuracy to eigenvalues of $\Psi_{\boldsymbol{\theta}}({\bf X},{\bf X})$. Below, $\kappa(\cdot)=\lambda_{\rm max}(\cdot)/\lambda_{\rm min}(\cdot)$ denotes the condition number of its (diagonalizable) argument. The proof of Proposition \ref{numAssumption} is provided in Appendix \ref{prop31proof}.
\begin{assu}\label{numAssumption}
Take $\kappa(\Psi_{\boldsymbol{\theta}}({\bf X},{\bf X}))=r/\delta$ for $r<1$ and
\begin{gather}
\begin{split}
&\|{\bf h}({\bf x})-\tilde{\bf h}({\bf x})\|_2\le\delta\|{\bf h}({\bf x})\|_2,\;\|f({\bf X})-\tilde{f}({\bf X})\|_2\le\delta\|f({\bf X})\|_2,\;\|h_j({\bf X})-\tilde{h}_j({\bf X})\|_2\le\delta\|h_j({\bf X})\|_2,\\
&\|\Psi_{\boldsymbol{\theta}}({\bf X},{\bf X})-\tilde{\Psi}_{\boldsymbol{\theta}}({\bf X},{\bf X})\|_2\le\delta\|\Psi_{\boldsymbol{\theta}}({\bf X},{\bf X})\|_2,\;{\rm and}\;\|\Psi_{\boldsymbol{\theta}}({\bf x},{\bf X})-\tilde{\Psi}_{\boldsymbol{\theta}}({\bf x},{\bf X})\|_2\le\delta\|\Psi_{\boldsymbol{\theta}}({\bf x},{\bf X})\|_2.\nonumber
\end{split}
\end{gather}
\end{assu}
\begin{prop}\label{prop31}
Under Assumption \ref{numAssumption}, the Gaussian process BLUP (\ref{GPinterpolator}) has numeric error $\left|\hat{f}_{{\boldsymbol{\vartheta}}}({\bf x})-\tilde{f}_{{\boldsymbol{\vartheta}}}({\bf x})\right|$, for arbitrary parameter vector $\boldsymbol{\vartheta}=\left(\begin{array}{cc}\boldsymbol{\beta}'&\boldsymbol{\theta}'\end{array}\right)'$, bounded above by
\begin{gather}
\begin{split}
\delta\|{\bf h}({\bf x})\|_2\|{\boldsymbol{\beta}}\|_2+\frac{2\delta}{1-r}\|\Psi_{\boldsymbol{\theta}}({\bf x},{\bf X})\|_2\left(\|f({\bf X})\|_2+\|{\boldsymbol{\beta}}\|_2\sqrt{\sum_{j=1}^p\|h_j({\bf X})\|_2^2}\right)g({\bf X},\Psi_{\boldsymbol{\theta}}),\label{numInequal4}
\end{split}
\end{gather}
where
\begin{gather}
g({\bf X},\Psi_{\boldsymbol{\theta}})=\frac{\kappa(\Psi_{\boldsymbol{\theta}}({\bf X},{\bf X}))+1}  {\lambda_{\rm min}(\Psi_{\boldsymbol{\theta}}({\bf X},{\bf X}))}.\nonumber
\end{gather}
\end{prop}
For experimental designs which are not too small and have reasonable parameter estimation properties, $\|\hat{\boldsymbol{\beta}}\|_2$ will be of a similar magnitude to $\|\boldsymbol{\beta}\|_2$.
Further, the terms $\|\Psi_{\boldsymbol{\theta}}({\bf x},{\bf X})\|_2$, $\|f({\bf X})\|_2$, and $\|h_j({\bf X})\|_2$ are Monte Carlo approximations to  $\sqrt{n}\|\Psi_{\boldsymbol{\theta}}({\bf x},\cdot)\|_{L_2(F)}$, $\sqrt{n}\|f(\cdot)\|_{L_2(F)}$, and $\sqrt{n}\|h_j(\cdot)\|_{L_2(F)}$, respectively, with respect to the large sample distribution of the experimental design $F$.
That is, the terms in the bound (\ref{numInequal4}), aside from $g({\bf X},\Psi_{\boldsymbol{\theta}})$, influence the numeric accuracy only weakly and vanishingly, and the bound depends on the experimental design primarily through $g({\bf X},\Psi_{\boldsymbol{\theta}})$.
The implication of Gershgorin's theorem \cite{varga} in (\ref{gershgorin}) implies $g({\bf X},\Psi_{\boldsymbol{\theta}})$ can be bounded in terms of the minimum eigenvalue of $\Psi_{\boldsymbol{\theta}}({\bf X},{\bf X})$ as
\begin{gather}\label{g}
g({\bf X},\Psi_{\boldsymbol{\theta}})\le\frac{1}{\lambda_{\rm min}(\Psi_{\boldsymbol{\theta}}({\bf X},{\bf X}))}\left(\frac{n\sup_{{\bf u},{\bf v}\in\Omega}\Psi_{\boldsymbol{\theta}}({\bf u},{\bf v})}{\lambda_{\rm min}(\Psi_{\boldsymbol{\theta}}({\bf X},{\bf X}))}+1\right).
\end{gather}

We adapt and generalize results from \cite{wendland2005} in the theorem below providing a lower bound for $\lambda_{\rm min}(\Psi_{\boldsymbol{\theta}}({\bf X},{\bf X}))$ and thereby an upper bound for (\ref{g}). 
The proof of Theorem \ref{numericTheorem} is provided in Appendix \ref{numericTheoremProof}.
First, a definition of the Fourier transform is provided.
\begin{defi}
For $f\in L_1(\mathbb{R}^d)$ define the Fourier transform \cite{stein1971}
\begin{gather}
\hat{f}(\omega)=(2\pi)^{-d/2}\int_{\mathbb{R}^d}f(x)e^{-i\omega'x}{\rm d}x.\nonumber
\end{gather}
\end{defi}
\begin{thm}\label{numericTheorem}
Suppose $\Phi$ is a positive definite, translation invariant kernel with Fourier transform $\hat{\Phi}\in L_1(\mathbb{R}^d)$. Then,
\begin{gather}
\sum_{j=1}^n\sum_{k=1}^n\alpha_j\alpha_k\Phi({\bf x}_j-{\bf x}_k)\ge\Upsilon_{c_*/q(\boldsymbol{\Theta})}({\bf 0})\sum_{j=1}^n\alpha_j^2\left(1-\frac{\Gamma^2(d/2+1)\pi}{18}\left(\frac{q(\boldsymbol{\Theta})}{q_j(\boldsymbol{\Theta})}\right)\left(\frac{12}{c_*}\right)^{d+1}\right),\nonumber
\end{gather}
for $c_*>0$, where 
\begin{gather}
\Upsilon_M({\bf 0})\equiv\lim_{{\bf t}\to{\bf 0}}\Upsilon_M({\bf t})=\frac{\hat{\Phi}_*(M)}{\Gamma(d/2+1)}\left(\frac{M}{2^{3/2}}\right)^d,\quad
\hat{\Phi}_*(M)=\inf_{\|\boldsymbol{\omega}\|_2\le 2M}\hat{\Phi}(\boldsymbol{\omega})\nonumber
\end{gather}
for
$M>0$,
and the respective \emph{local} and \emph{global} separation distances with respect to the Mahalanobis-like distance ${\rm d}_{\boldsymbol{\Theta}}$, $\boldsymbol{\Theta}$ non-singular are given by 
\begin{gather}
q_j(\boldsymbol{\Theta})=\frac{1}{2}\min_{k=1,\ldots,n,\;k\ne j}{\rm d}_{\boldsymbol{\Theta}}({\bf x}_j,{\bf x}_k)\quad{\rm and}\quad q(\boldsymbol{\Theta})=\min_j q_j(\boldsymbol{\Theta}).\nonumber
\end{gather}
\end{thm}
This result is now applied 
to the Stationary Model and Non-Stationary Model (Subsections \ref{nominalStationary} and \ref{nominalNonStationary}, respectively).
\subsection{Stationary Model}\label{numericStationary}

Here,
\begin{gather}
\begin{split}
\lambda_{\rm min}(\Psi_{\boldsymbol{\theta}}({\bf X},{\bf X}))&=\min_{\|{\bf a}\|_2=1}\sum_{i,j}a_i a_j\Psi_{\boldsymbol{\theta}}({\bf x}_i,{\bf x}_j)\\
&=\sigma^2\min_{\|{\bf a}\|_2=1}\sum_{i,j}a_i a_j\varphi(\|\boldsymbol{\Theta}({\bf x}_i-{\bf x}_j)\|_2)\\
&=\sigma^2\min_{\|{\bf a}\|_2=1}\sum_{i,j}a_i a_j\varphi(\|{\bf x}^*_i-{\bf x}^*_j\|_2)\\
&=\sigma^2\min_{\|{\bf a}\|_2=1}\sum_{i,j}a_i a_j\Phi({\bf x}^*_i-{\bf x}^*_j)\\
&\ge\sigma^2 \min_{\|{\bf a}\|_2=1}\sum_{i=1}^n a_i^2 \ell_i(\boldsymbol{\Theta}),\label{case1LowerBound}
\end{split}
\end{gather}
where $\Phi({\bf x}^*-{\bf y}^*)=\varphi(\|{\bf x}^*-{\bf y}^*\|_2)$ and
\begin{gather}
\ell_i(\boldsymbol{\Theta})=\Upsilon_{c_*/q(\boldsymbol{\Theta})}({\bf 0})\left(1-\frac{\Gamma^2(d/2+1)\pi}{18}\left(\frac{q(\boldsymbol{\Theta})}{q_i(\boldsymbol{\Theta})}\right)\left(\frac{12}{c_*}\right)^{d+1}\right).\label{elli}
\end{gather}
The lower bound (\ref{case1LowerBound}) is maximized for $\ell_i(\boldsymbol{\Theta})$ constant over $i$ and as large as possible. This implies $q_i(\boldsymbol{\Theta})=q(\boldsymbol{\Theta})$ for all $i$. Now, the lower bound depends on 
\begin{gather}
\Upsilon_{c_*/q(\boldsymbol{\Theta})}({\bf 0})=\frac{\hat{\Phi}_*(c_*/q(\boldsymbol{\Theta}))}{\Gamma(d/2+1)}\left(\frac{c_*/q(\boldsymbol{\Theta})}{2^{3/2}}\right)^d,\label{Upsilon0star}
\end{gather}
which, for sufficiently small $q(\boldsymbol{\Theta})$, is an increasing function of $q(\boldsymbol{\Theta})$ that approaches zero as $q(\boldsymbol{\Theta})$ approaches zero. That is, in the stationary situation,
numeric accuracy is preserved for designs which are well-separated. 

\subsection{Non-Stationary Model}\label{numericNonStationary}

Here, assume \emph{additionally} that
$\boldsymbol{\Theta}_2=a\boldsymbol{\Theta}_1$ for some $a>1$.
Slightly, coarsen the bounds by replacing $\Upsilon_M({\bf 0})$ with its monotone decreasing in $M$ lower bound $\tilde{\Upsilon}^0_M=\inf_{m\in[r_*,M]}\Upsilon_m({\bf 0})$,
$r_*=c_*/\max_{{\bf x},{\bf y}\in\Omega}{\rm d}_{\boldsymbol{\Theta}_1}({\bf x},{\bf y})$. 
Let $\tilde{\ell}_i$ denote the coarsened version of (\ref{elli}).
Then,
\begin{gather}
\begin{split}
&\lambda_{\rm min}(\Psi_{\boldsymbol{\theta}}({\bf X},{\bf X}))=\min_{\|{\bf a}\|_2=1}\sum_{i,j}a_i a_j\Psi_{\boldsymbol{\theta}}({\bf x}_i,{\bf x}_j)\\
&=\sigma^2\min_{\|{\bf a}\|_2=1}\sum_{i,j}a_i a_j\left(\omega_1({\bf x}_i)\omega_1({\bf x}_j)\varphi(\|\boldsymbol{\Theta}_1({\bf x}_i-{\bf x}_j)\|_2)+\omega_2({\bf x}_i)\omega_2({\bf x}_j)\varphi(\|\boldsymbol{\Theta}_2({\bf x}_i-{\bf x}_j)\|_2)\right)\\
&\ge\sigma^2\min_{\|{\bf a}\|_2=1}\sum_{i}^n a_i^2\left(\omega_1({\bf x}_i)^2 \tilde{\ell}_i(\boldsymbol{\Theta}_1)+\omega_2({\bf x}_i)^2 \tilde{\ell}_i(\boldsymbol{\Theta}_2)\right),\label{case2LowerBound}
\end{split}
\end{gather}
where 
$\hat{\Phi}$ is the Fourier transform of $\Phi$ defined by $\Phi({\bf x}^*-{\bf y}^*)=\varphi(\|{\bf x}^*-{\bf y}^*\|_2)$ in (\ref{elli}) and (\ref{Upsilon0star}).
The lower bound (\ref{case2LowerBound}) is maximized for $\omega_1({\bf x}_i)^2 \tilde{\ell}_i(\boldsymbol{\Theta}_1)+\omega_2({\bf x}_i)^2 \tilde{\ell}_i(\boldsymbol{\Theta}_2)$ constant over $i$ and as large as possible.

Consider two design points ${\bf x}_i$ and ${\bf x}_j$ and suppose that the points in the input space \emph{near} ${\bf x}_i$ have more weight on the global, long range, correlation than the points in the input space \emph{near} ${\bf x}_j$ and the points in the input space \emph{near} ${\bf x}_j$ have more weight on the local, short range, correlation than the points in the input space \emph{near} ${\bf x}_i$, in the sense that
\begin{gather}
\begin{split}
\omega_1({\bf x}_i)^2(\tilde{\ell}_i(\boldsymbol{\Theta}_2)-\tilde{\ell}_i(\boldsymbol{\Theta}_1))&\ge\omega_1({\bf x}_j)^2(\tilde{\ell}_j(\boldsymbol{\Theta}_2)-\tilde{\ell}_j(\boldsymbol{\Theta}_1)),\\
\omega_2({\bf x}_i)^2(\tilde{\ell}_i(\boldsymbol{\Theta}_2)-\tilde{\ell}_i(\boldsymbol{\Theta}_1))&\le\omega_2({\bf x}_j)^2(\tilde{\ell}_j(\boldsymbol{\Theta}_2)-\tilde{\ell}_j(\boldsymbol{\Theta}_1)).\label{nonstationarityAssumptionNumeric}
\end{split}
\end{gather}
Here, we consider the situation where $q_i(\boldsymbol{\Theta}_1)$ and $q_i(\boldsymbol{\Theta}_2)$ are \emph{small} across $i$, the situation where a bound on the numeric error is most relevant.
For $q(\boldsymbol{\Theta}_1)$ and $q(\boldsymbol{\Theta}_2)$ sufficiently small, $\tilde{\Upsilon}^0_{c_*/q(\boldsymbol{\Theta})}$ is \emph{strictly} increasing in $q$.
Further, the assumption $\boldsymbol{\Theta}_2=a\boldsymbol{\Theta}_1$ for some $a>1$ implies $q(\boldsymbol{\Theta}_1)/q_i(\boldsymbol{\Theta}_1)= q(\boldsymbol{\Theta}_2)/q_i(\boldsymbol{\Theta}_2)$ and $q(\boldsymbol{\Theta}_2)> q(\boldsymbol{\Theta}_1)$. 
Together, these facts imply $\tilde{\ell}_i(\boldsymbol{\Theta}_2)>\tilde{\ell}_i(\boldsymbol{\Theta}_1)$.
Uniformity of the bounds (\ref{case2LowerBound}) along with $\omega_1(\cdot)^2+\omega_2(\cdot)^2=1$ gives
\begin{gather}
\begin{split}
&\omega_1({\bf x}_i)^2(\tilde{\ell}_i(\boldsymbol{\Theta}_1)-\tilde{\ell}_i(\boldsymbol{\Theta}_2))-\omega_1({\bf x}_j)^2(\tilde{\ell}_j(\boldsymbol{\Theta}_1)-\tilde{\ell}_j(\boldsymbol{\Theta}_2))=\tilde{\ell}_j(\boldsymbol{\Theta}_2) - \tilde{\ell}_i(\boldsymbol{\Theta}_2),\\
&\omega_2({\bf x}_j)^2(\tilde{\ell}_j(\boldsymbol{\Theta}_1)-\tilde{\ell}_j(\boldsymbol{\Theta}_2))-\omega_2({\bf x}_i)^2(\tilde{\ell}_i(\boldsymbol{\Theta}_1)-\tilde{\ell}_i(\boldsymbol{\Theta}_2))=\tilde{\ell}_j(\boldsymbol{\Theta}_1)- \tilde{\ell}_i(\boldsymbol{\Theta}_1).\label{uniformityNumeric}
\end{split} 
\end{gather}
Combining (\ref{nonstationarityAssumptionNumeric}) and (\ref{uniformityNumeric}) with the fact that $\tilde{\ell}_i(\boldsymbol{\Theta})$ is an increasing function of $q_i(\boldsymbol{\Theta})$ for small $q_i(\boldsymbol{\Theta})$ gives
\begin{gather}
\begin{split}
q_j(\boldsymbol{\Theta}_1)<q_i(\boldsymbol{\Theta}_1)\quad{\rm and}\quad q_j(\boldsymbol{\Theta}_2)<q_i(\boldsymbol{\Theta}_2).\nonumber
\end{split}
\end{gather}
That is, a uniform bound on (\ref{case2LowerBound}) is achieved by an experimental design ${\bf X}$ which has smaller separation distance, with respect to either ${\rm d}_{\boldsymbol{\Theta}_1}$ or ${\rm d}_{\boldsymbol{\Theta}_2}$, in regions with more emphasis on the local, more quickly decaying, correlation and less emphasis on the global, more slowly decaying, correlation. Note that in the numeric accuracy context, the global and local \emph{emphases}, for \emph{small} $q_i(\boldsymbol{\Theta}_1)$ and $q_i(\boldsymbol{\Theta}_2)$, at ${\bf x}_i$ are given concretely by 
$\omega_1({\bf x}_i)^2(\tilde{\ell}_i(\boldsymbol{\Theta}_2)-\tilde{\ell}_i(\boldsymbol{\Theta}_1))$ and 
$\omega_2({\bf x}_i)^2(\tilde{\ell}_i(\boldsymbol{\Theta}_2)-\tilde{\ell}_i(\boldsymbol{\Theta}_1))$, respectively.


Example high quality 23 run experimental designs for the \emph{numeric} situations described in 
Subsection \ref{numericStationary} (Stationary Model) and Subsection \ref{numericNonStationary} (Non-Stationary Model)
are illustrated in the left and right panels, respectively, of Figure \ref{NumDesign}. 
For the Stationary Model example, $\varphi(d)={\rm exp}\{-d^2\}$, the so-called \emph{Gaussian} correlation function.
For the Non-Stationary Model example,
$\varphi(d)$ is Wendland's kernel with $k=10$ \cite{wendland2005}, 
$\omega_1({\bf u})^2=1-\|{\bf u}\|_2^2/2$, $\omega_2({\bf u})^2=\|{\bf u}\|_2^2/2$,
$\boldsymbol{\Theta}_1=0.1\cdot{\bf I}_2$, and $\boldsymbol{\Theta}_2=1\cdot{\bf I}_2$.
For both cases,
$\sigma^2=1$.
As expected, in the first panel, illustrating the stationary situation, the design points lie near a triangular lattice (subject to edge effects), similar to, but expanded towards the edges of the design space relative to, the nominal design.
Similarly, in the second panel, illustrating the non-stationary correlation situation, the design points in the upper right-hand corner, where the shorter range, more quickly decaying, correlation is emphasized, are more dense than in the lower left-hand corner, where the longer range, more slowly decaying, correlation is emphasized.
While the provided bounds hold for all $c _*>0$, the actual value of the bounds depends on the selected value of $c_*$.
Here, we take $c_*=1.1\times 12\left(\frac{18}{\pi\Gamma^2(d/2+1)}\right)^{-1/(d+1)}$.
Similarly to the nominal examples, the optimization routine was initialized at a triangular lattice, scaled to maximize the separation distance. For the stationary situation, a homotopy continuation \cite{homotopy} approach
was used,
slowly transitioning from maximizing the mean of the local bounds ${\ell}_i(\boldsymbol{\Theta})$ to maximizing the minimum of the local bounds.
For the non-stationary situation, a homotopy continuation approach with two stages was used, first transitioning from $\omega_1,\omega_2$ constant to varying over the input space, then transitioning from maximizing the mean of the local bounds $\omega_1({\bf x}_i)^2\tilde{\ell}_i(\boldsymbol{\Theta}_1)+\omega_2({\bf x}_i)^2\tilde{\ell}_i(\boldsymbol{\Theta}_2)$ to maximizing the minimum of the local bounds.
Nelder-Mead \cite{nocedal} black box optimization was used throughout. 
\begin{figure}[!h]
\centering
\subfigure{\includegraphics[width=1.65in]{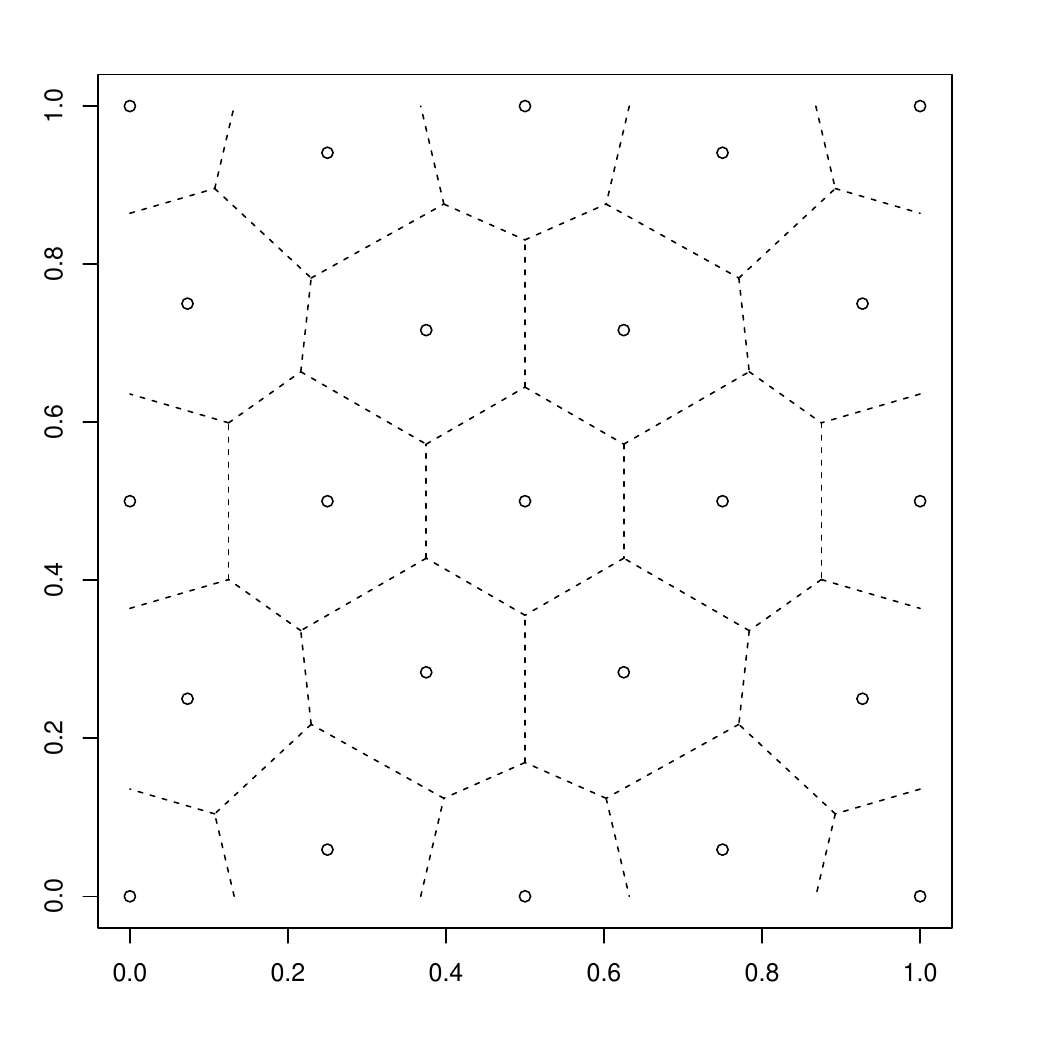}}
\subfigure{\includegraphics[width=1.65in]{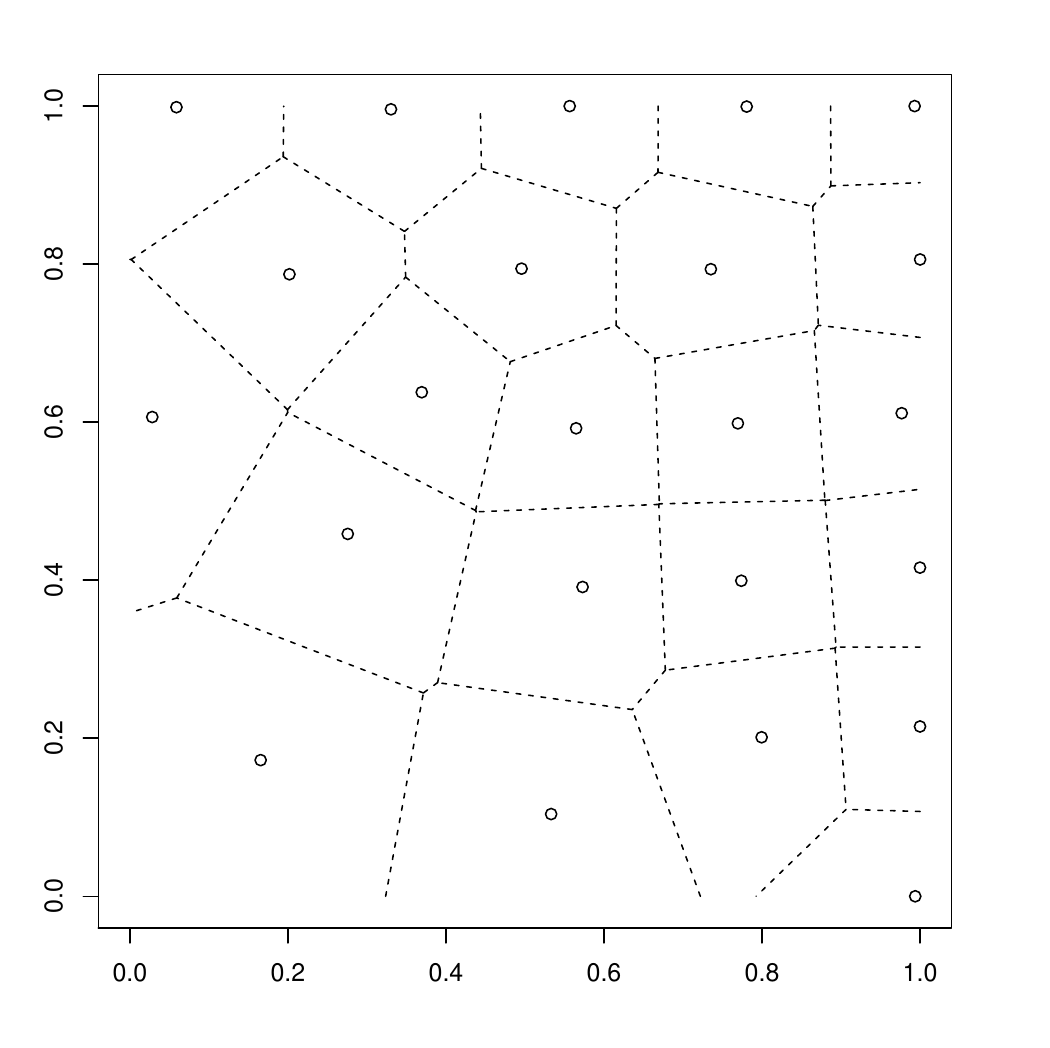}}
\caption{{\bf Left Panel:} Numeric error design for stationary correlation. {\bf Right Panel:} Numeric error design for the Case 2 model of non-stationary correlation with $\varphi(d)$ Wendland's kernel with $k=10$, 
$\omega_1({\bf u})^2=1-\|{\bf u}\|_2^2/2$, $\omega_2({\bf u})^2=\|{\bf u}\|_2^2/2$,
$\boldsymbol{\Theta}_1=0.1\cdot{\bf I}_2$, and $\boldsymbol{\Theta}_2=1\cdot{\bf I}_2$}
\label{NumDesign}
\end{figure}

\section{Parameter Estimation}\label{paramEst}

Consider maximum likelihood estimation and let $\mathbb{E}$ denote the expectation conditional on ${\bf X}$ and $f({\bf X})$.
Then, for $n$ not too small,
\begin{gather} 
\begin{split}
\mathbb{E}\left\{\hat{f}_{\boldsymbol{\vartheta}_*}({\bf x})-\hat{f}_{\hat{\boldsymbol{\vartheta}}}({\bf x})\right\}^2&\approx\frac{\partial\hat{f}_{\boldsymbol{\vartheta}_*}({\bf x})}{\partial\boldsymbol{\vartheta}'_*}{\rm Var}\;\hat{\boldsymbol{\vartheta}}\frac{\partial\hat{f}_{\boldsymbol{\vartheta}_*}({\bf x})}{\partial\boldsymbol{\vartheta}_*}\\
&\approx \frac{\partial\hat{f}_{\boldsymbol{\vartheta}_*}({\bf x})}{\partial\boldsymbol{\vartheta}'_*}\mathcal{I}({\boldsymbol{\vartheta}}_*)^{-1} \frac{\partial\hat{f}_{\boldsymbol{\vartheta}_*}({\bf x})}{\partial\boldsymbol{\vartheta}_*}
,\label{approxExpNumErr}
\end{split}
\end{gather}
where $\mathcal{I}({\boldsymbol{\vartheta}}_*)=\mathbb{E}\frac{\partial\ell}{\partial\boldsymbol{\vartheta}_*}\frac{\partial\ell}{\partial\boldsymbol{\vartheta}'_*}$ denotes the information matrix
and $\ell$ denotes the log-likelihood of the data $f({\bf X})$.
Roughly, a high-quality design for parameter estimation will have $\left\|\frac{\partial\hat{f}_{\boldsymbol{\vartheta}_*}({\bf x})}{\partial\boldsymbol{\vartheta}_*}\right\|_2$ small and $\lambda_{\rm min}(\mathcal{I}({\boldsymbol{\vartheta}}_*))$ large.
Arrange the vector of parameters as $\boldsymbol{\vartheta}= \begin{pmatrix}\boldsymbol{\beta}' & \boldsymbol{\theta}'\end{pmatrix}'$ and $ \boldsymbol{\theta} = \begin{pmatrix} \sigma^2 & \boldsymbol{\varrho}'\end{pmatrix}'$.
Throughout the parameter estimation section, take 
\begin{gather}
\Psi_{\boldsymbol{\theta}}({\bf u,v})=\sigma^2\Phi_{\boldsymbol{\varrho}}({\bf u},{\bf v})\label{paramEstAssumption}.
\end{gather}
Expressions for the components of the right-hand side of (\ref{approxExpNumErr}) are provided in Lemma \ref{paramProp} in Appendix \ref{paramThmProof}. 
These expressions are in turn used to develop the approximate upper bound for the mean squared prediction error given in Theorem \ref{paramThm}.
Proofs of Lemma \ref{paramProp} and Theorem \ref{paramThm} are provided in Appendix \ref{paramThmProof}.
\begin{thm}\label{paramThm}
Suppose $f(\cdot)\sim{\rm GP}({\bf h}(\cdot)'\boldsymbol{\beta},\sigma^2\Phi_{\boldsymbol{\varrho}}(\cdot,\cdot))$ for fixed, known regression functions ${\bf h}(\cdot)$ and positive definite $\Phi_{\boldsymbol{\varrho}}(\cdot,\cdot)$.
Further, assume that the input locations ${\bf X}$ have large sample distribution $F$.
Let $\hat{\boldsymbol{\vartheta}}$ denote the maximum likelihood estimator of the unknown parameters $\boldsymbol{\vartheta}= \begin{pmatrix}\boldsymbol{\beta}' & \sigma^2 & \boldsymbol{\varrho}'\end{pmatrix}'$.
Then, an approximate upper bound for\\ $\mathbb{E}\left\{\hat{f}_{\boldsymbol{\vartheta}_*}({\bf x})-\hat{f}_{\hat{\boldsymbol{\vartheta}}}({\bf x})\right\}^2$ is given by
\begin{gather}
\begin{split}
\sup_{{\bf u},{\bf v}\in\Omega}\Phi_{\boldsymbol{\varrho}}({\bf u},{\bf v})\left(\frac{\|{\bf c}_1\|_2^2}{s_1}+\frac{2\sup_{{\bf u},{\bf v}\in\Omega}\Phi_{\boldsymbol{\varrho}}({\bf u},{\bf v})\|{\bf c}_3\|_2^2}{s_2}\right)
\nonumber
\end{split}
\end{gather}
where 
\begin{gather}
\begin{split}
&{\bf c}_1 = {\bf h(x)} - {\bf H(X)}' \Psi_{\boldsymbol{\theta}}({\bf X,X})^{-1} \Psi_{\boldsymbol{\theta}}({\bf X,x}),\\
&{\bf c}_3=\left(\frac{\partial\Psi_{\boldsymbol{\theta}}({\bf x,X})}{\partial\boldsymbol{\varrho}} - ({\bf I}_d\otimes\Psi_{\boldsymbol{\theta}}({\bf x,X})\Psi_{\boldsymbol{\theta}}({\bf X,X})^{-1})\frac{\partial\Psi_{\boldsymbol{\theta}}({\bf X,X})}{\partial\boldsymbol{\varrho}}\right)\Psi_{\boldsymbol{\theta}}({\bf X,X})^{-1}\delta({\bf X}),\nonumber
\end{split}
\end{gather}
$s_1=\lambda_{\rm min}\left(\int {\bf h}({\bf y}){\bf h}({\bf y})'{\rm d}F({\bf y})\right)$,
and $s_2$ is implicitly defined in (\ref{boundStar2}).
In particular, $s_1>0$ 
unless ${\bf h}({\bf y})'{\bf a}=0$ with probability $1$ with respect to $F$ for some ${\bf a}\ne 0$
and 
$s_2>0$ unless $\frac{\partial\Phi_{\boldsymbol{\varrho}}({\bf x},{\bf y})}{\partial\boldsymbol{\varrho}'}{\bf a}=\Phi_{\boldsymbol{\varrho}}({\bf x},{\bf y})b$ with probability 1 with respect to $F\times F$ for some $\left({\bf a}'\;\;b\right)'\ne {\bf 0}$.
%
\end{thm}
This upper bound is approximate in the sense that for a sequence of experimental designs for which the maximum likelihood estimates converge, the probability that the upper bound is violated by more than $\varepsilon>0$ goes to zero.

The term $\|{\bf c}_3\|_2^2$ admits the simple upper bound
\begin{gather}
\left\|\frac{\partial\Psi_{\boldsymbol{\theta}}({\bf x,X})}{\partial\boldsymbol{\varrho}} - ({\bf I}_u\otimes\Psi_{\boldsymbol{\theta}}({\bf x,X})\Psi_{\boldsymbol{\theta}}({\bf X,X})^{-1})\frac{\partial\Psi_{\boldsymbol{\theta}}({\bf X,X})}{\partial\boldsymbol{\varrho}}\right\|_2^2\left\|\delta({\bf X})\right\|_2^2\left/\lambda_{\rm min}\left(\Psi_{\boldsymbol{\theta}}({\bf X,X})\right)^2.\right.\nonumber
\end{gather}
The term $\left\|\delta({\bf X})\right\|_2=\left\|f({\bf X})-{\bf H(X)}{\boldsymbol{\beta}}\right\|_2$ is an approximation to $\sqrt{n}\left\|f(\cdot)-{\bf h(\cdot)}'{\boldsymbol{\beta}}\right\|_{L_2(F)}$ with respect to the large sample distribution of the data $F$.
Further, 
$\lambda_{\rm min}\left(\Psi_{\boldsymbol{\theta}}({\bf X,X})\right)$
is well-controlled by experimental designs which maintain high-quality numeric properties.
Similarly, both ${\bf h}({\bf x})-{\bf H}({\bf X})'\Psi_{\boldsymbol{\theta}}({\bf X},{\bf X})^{-1}\Psi_{\boldsymbol{\theta}}({\bf X},{\bf x})$ and\\ 
$\frac{\partial\Psi_{\boldsymbol{\theta}}({\bf x,X})}{\partial\boldsymbol{\varrho}} - ({\bf I}_u\otimes\Psi_{\boldsymbol{\theta}}({\bf x,X})\Psi_{\boldsymbol{\theta}}({\bf X,X})^{-1})\frac{\partial\Psi_{\boldsymbol{\theta}}({\bf X,X})}{\partial\boldsymbol{\varrho}}$ are nominal interpolation errors, respectively for the regression functions and (the transpose of) the Jacobian of $\Psi_{\boldsymbol{\theta}}({\bf X,x})$ with respect to the correlation parameters.
As discussed towards the end of Section \ref{nominal}, we expect the norms of both of these interpolation errors to behave in a manner similar to Gaussian process or RKHS interpolation. That is, the norms of both of these terms will be small for experimental designs which are high-quality with respect to nominal error. 

As discussed towards the end of Section \ref{nominal}, $s_1$ will be large for sets of input locations which have good traditional experimental design properties.
The term $s_2$ will be large for experimental designs whose system of differences $\{{\bf x}_i-{\bf x}_j\}$ make $\frac{\partial\Phi_{\boldsymbol{\varrho}}({\bf x}_i,{\bf x}_j)}{\partial\boldsymbol{\varrho}}$ far from zero, \emph{balanced} with respect to a basis of $\mathbb{R}^{{\rm dim}\;\boldsymbol{\varrho}}$, and \emph{not collinear} with  $\Phi_{\boldsymbol{\varrho}}({\bf x}_i,{\bf x}_j)$. 
Consider as an example,
underlying kernels which \emph{depend only on the difference between their arguments} and are \emph{radially decreasing} in the sense that $\Phi(\boldsymbol{\delta}_1)\ge\Phi(\boldsymbol{\delta}_2)$ if $\|\boldsymbol{\delta}_1\|_2\le\|\boldsymbol{\delta}_2\|_2$ with $\Phi_{\boldsymbol{\varrho}}(\cdot)=\Phi({\rm diag}\{\boldsymbol{\varrho}\}(\cdot))$.
For radially decreasing underlying kernels $\Phi$, the term $\frac{\partial\Phi_{\boldsymbol{\varrho}}({\bf x}_i-{\bf x}_j)}{\partial\boldsymbol{\varrho}}$ is near zero if ${\bf x}_i-{\bf x}_j$ is near zero or far from zero, while the term $\frac{\partial\Phi_{\boldsymbol{\varrho}}({\bf x}_i-{\bf x}_j)}{\partial\boldsymbol{\varrho}}$ has negative components if the difference ${\bf x}_i-{\bf x}_j$ is slightly beyond the location where $\Phi({\rm diag}\{\boldsymbol{\theta}\}(\cdot))$ is decreasing most rapidly along each coordinate axis, since $\frac{\partial\Phi_{\boldsymbol{\varrho}}({\bf x}_i-{\bf x}_j)}{\partial\boldsymbol{\varrho}}={\rm diag}\{{\bf x}_i-{\bf x}_j\}\nabla\Phi({\rm diag}\{\boldsymbol{\varrho}\}({\bf x}_i-{\bf x}_j))$. 
In this situation, $\frac{\partial\Phi_{\boldsymbol{\varrho}}({\bf x}_i-{\bf x}_j)}{\partial\boldsymbol{\varrho}}$ has negative components and $\Phi_{\boldsymbol{\varrho}}({\bf x}_i-{\bf x}_j)$ is a non-negative weighting function, 
so they could only be (nearly) collinear for experimental designs which make almost all $\frac{\partial\Phi_{\boldsymbol{\varrho}}({\bf x}_i-{\bf x}_j)}{\partial\boldsymbol{\varrho}}$ near zero.
Figure \ref{cijPlots} shows $\Phi_{\boldsymbol{\varrho}}(\cdot)$ and both components of $\frac{\partial\Phi_{\boldsymbol{\varrho}}(\cdot)}{\partial\boldsymbol{\varrho}}$ 
for $\Phi({\bf d})={\rm exp}\{-{\bf d}'{\bf d}\}$ and $\boldsymbol{\varrho}=\begin{pmatrix}1&2\end{pmatrix}'$.
Pairs of points ${\bf x}_i,\;{\bf x}_j$ whose difference lies slightly beyond the location where $\Phi({\rm diag}\{\boldsymbol{\theta}\}(\cdot))$ is decreasing most rapidly along each coordinate axis have potential to increase eigenvalues of 
$\sum_{ij}\frac{\partial\Phi_{\boldsymbol{\varrho}}({\bf x}_i-{\bf x}_j)}{\partial\boldsymbol{\varrho}}\frac{\partial\Phi_{\boldsymbol{\varrho}}({\bf x}_i-{\bf x}_j)}{\partial\boldsymbol{\varrho}'}$.
Further, 
$\lambda_{\rm min}\left(\sum_{ij}\frac{\partial\Phi_{\boldsymbol{\varrho}}({\bf x}_i-{\bf x}_j)}{\partial\boldsymbol{\varrho}}\frac{\partial\Phi_{\boldsymbol{\varrho}}({\bf x}_i-{\bf x}_j)}{\partial\boldsymbol{\varrho}'}\right)$ is large
for sets of differences $\{{\bf x}_i-{\bf x}_j\}$ which \emph{balance} the differences along coordinate axes in the sense that
\begin{gather}
n_k \max_{\bf d}\left\{\frac{\partial\Phi_{\boldsymbol{\varrho}}({\bf d})}{\partial\boldsymbol{\varrho}}\right\}_k^2\approx n_l \max_{\bf d}\left\{\frac{\partial\Phi_{\boldsymbol{\varrho}}({\bf d})}{\partial\boldsymbol{\varrho}}\right\}_l^2,\nonumber
\end{gather}
for $k,l=1,\ldots,d$ where $\left\{\cdot\right\}_k$ denotes element $k$ of 
its argument
and $n_k$ denotes the number of differences (of length slightly beyond the location where $\Phi({\rm diag}\{\boldsymbol{\theta}\}(\cdot))$ is decreasing most rapidly) along coordinate axis $k$. In the example described above and illustrated in Figure \ref{cijPlots}, $\left\{\nabla\Phi({\rm diag}\{\boldsymbol{\varrho}\}({\bf d}))'{\rm diag}\{{\bf d}\}\right\}^2_1\approx (0.8)^2$ and $\left\{\nabla\Phi({\rm diag}\{\boldsymbol{\varrho}\}({\bf d}))'{\rm diag}\{{\bf d}\}\right\}^2_2\approx (0.4)^2$, respectively at $\begin{pmatrix}\pm 1&0\end{pmatrix}'$ and $\begin{pmatrix}0&\pm 0.5\end{pmatrix}'$, so an experimental design \emph{solely targeting the eigenvalues of} 
$\sum_{ij}\frac{\partial\Phi_{\boldsymbol{\varrho}}({\bf x}_i-{\bf x}_j)}{\partial\boldsymbol{\varrho}}\frac{\partial\Phi_{\boldsymbol{\varrho}}({\bf x}_i-{\bf x}_j)}{\partial\boldsymbol{\varrho}'}$
would have roughly $n_1$ differences ${\bf x}_i-{\bf x}_j=\begin{pmatrix}\pm 1&0\end{pmatrix}'$ and $n_2$ differences ${\bf x}_i-{\bf x}_j=\begin{pmatrix}0&\pm 0.5\end{pmatrix}'$ where
\begin{gather}
n_1(0.8)^2=n_2(0.4)^2\Longrightarrow n_1=\frac{n_2}{4}.\nonumber
\end{gather}

\begin{figure}
\centering
\includegraphics[width=4.95in]{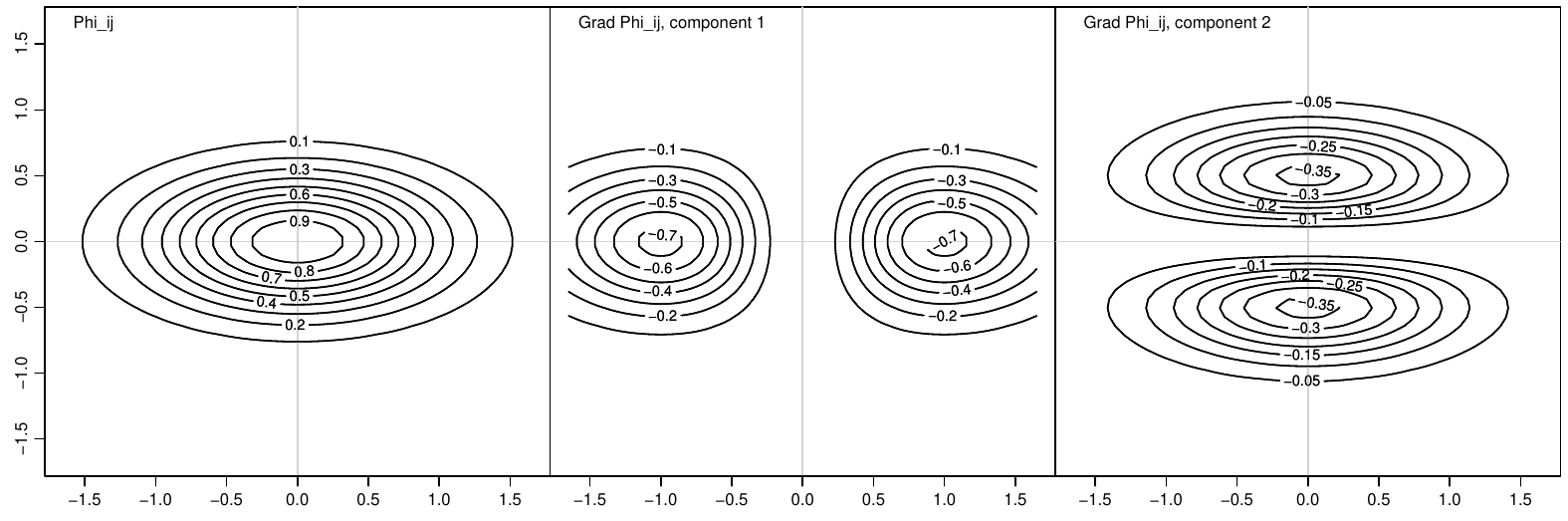}
\caption{
{\bf Left Panel:} $\Phi_{\boldsymbol{\varrho}}(\cdot)$.
{\bf Middle and Right Panels:} Components of $\frac{\partial\Phi_{\boldsymbol{\varrho}}(\cdot)}{\partial\boldsymbol{\varrho}}$. 
}\label{cijPlots}
\end{figure}

Consider another example with $\Phi({\bf d})={\rm exp}\{-{\bf d}'{\bf d}\}$, $\Phi_{\boldsymbol{\varrho}}(\cdot)=\Phi({\rm diag}\{\boldsymbol{\varrho}\}(\cdot))$, and $\boldsymbol{\varrho}=\left(\;3\quad 3\;\right)'$. An experimental design maximizing $\lambda_{\rm min}\left(\sum_{ij}\frac{\partial\Phi_{\boldsymbol{\varrho}}({\bf x}_i-{\bf x}_j)}{\partial\boldsymbol{\varrho}}\frac{\partial\Phi_{\boldsymbol{\varrho}}({\bf x}_i-{\bf x}_j)}{\partial\boldsymbol{\varrho}'}\right)$ 
is shown in the left panel of Figure \ref{ParamDesign}. There are 11 points at the middle location and 3 at each peripheral location. In particular, 
this design 
is \emph{not} space-filling.
A high quality experimental design with respect to the upper-bound in Theorem \ref{paramThm} is shown in the right panel of Figure \ref{ParamDesign}.
The influence of 
$\lambda_{\rm min}\left(\sum_{ij}\frac{\partial\Phi_{\boldsymbol{\varrho}}({\bf x}_i-{\bf x}_j)}{\partial\boldsymbol{\varrho}}\frac{\partial\Phi_{\boldsymbol{\varrho}}({\bf x}_i-{\bf x}_j)}{\partial\boldsymbol{\varrho}'}\right)$
is substantially less than the influence of the space-filling properties controlling the nominal and numeric error.

\begin{figure}
\centering
\subfigure{\includegraphics[width=1.65in]{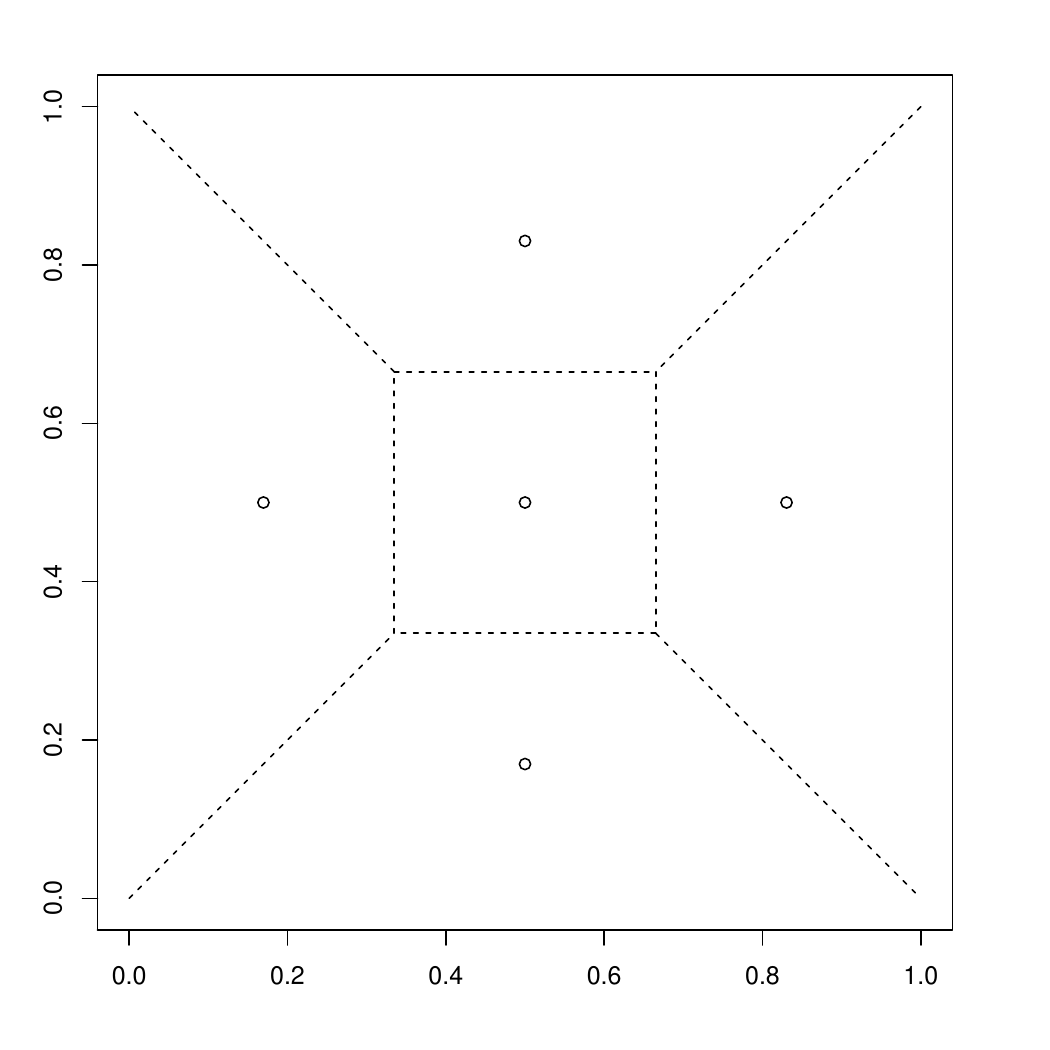}}
\subfigure{\includegraphics[width=1.65in]{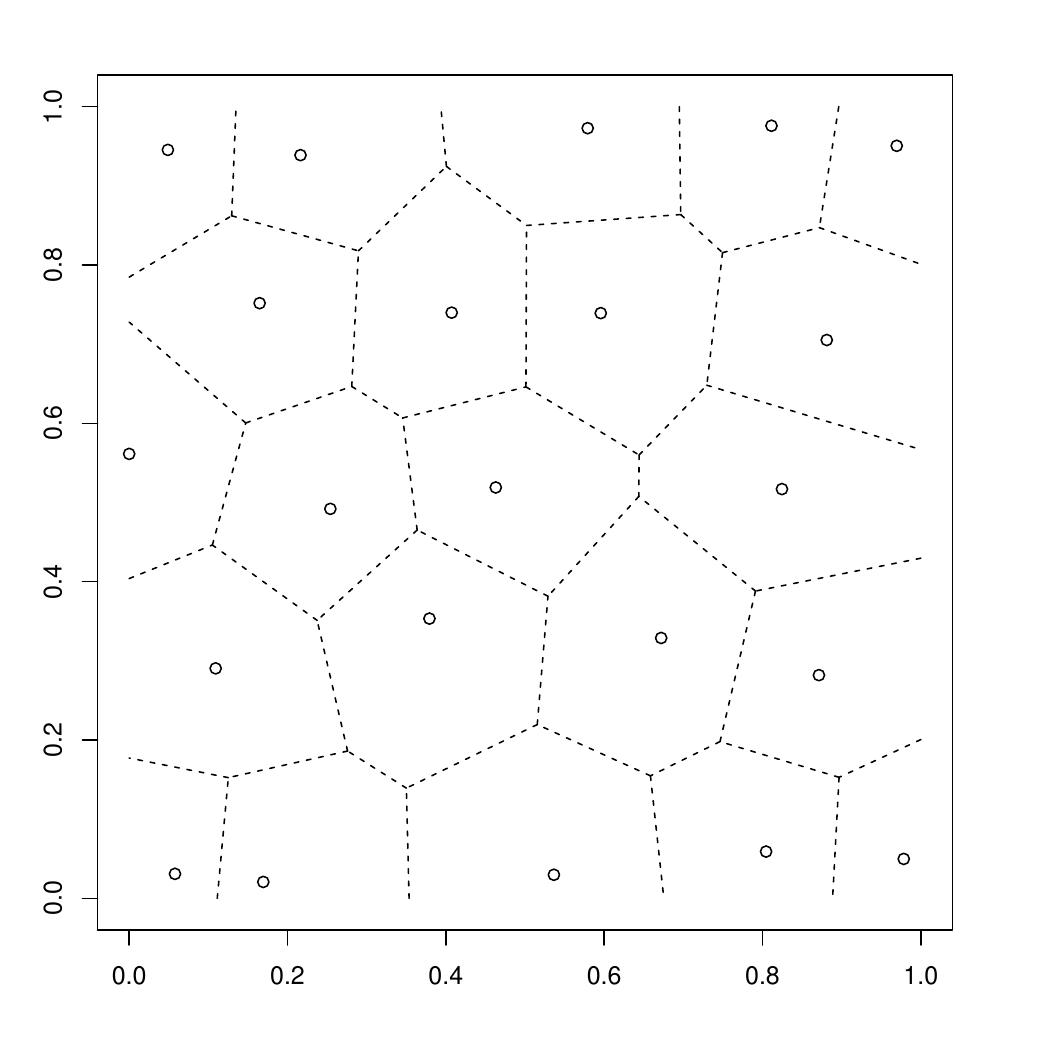}}
\caption{{\bf Left Panel:} Experimental design maximizing $\lambda_{\rm min}({\bf C}'_{\boldsymbol{\theta}}{\bf C}_{\boldsymbol{\theta}})$ and minimizing $\kappa({\bf C}'_{\boldsymbol{\theta}}{\bf C}_{\boldsymbol{\theta}})$ for $\Phi({\bf d})={\rm exp}\{-{\bf d}'{\bf d}\}$ and $\boldsymbol{\varrho}=\left(\;3\quad 3\;\right)'$. Note that there are 11 points at the middle location and 3 at each peripheral location. {\bf Right Panel:} Parameter estimation error design with respect to the upper-bound in Theorem \ref{paramThm}.}
\label{ParamDesign}
\end{figure}

\section{Discussion}\label{discussion}

Broadly applicable and rigorously justified principles of experimental design for Gaussian process emulation of deterministic computer experiments have been developed. 
The \emph{space-filling} properties ``small fill distance'' and ``large separation distance'', potentially with respect to an input space rescaling to accommodate varying rates of correlation decay depending on displacement orientation, are only weakly conflicting and ensure \emph{well-controlled} nominal, numeric, and parameter estimation error. 
The presence of non-stationarity in correlation requires a higher density of input locations in regions with more emphasis on the local, more quickly decaying, correlation, relative to input locations in regions with more emphasis on the global, more slowly decaying, correlation. Designs of this type can potentially be constructed via
homotopy continuation techniques \cite{homotopy}.
Alternatively, techniques in \cite{bowman} or \cite{joseph} may be useful for constructing high-quality experimental designs in a non-stationary setting.
The inclusion of regression functions results in high quality designs which balance traditional experimental design properties, targeting the variance of regression function coeffecients, with space-filling properties, while consideration of error in parameter estimation results in high quality designs slightly favoring pairs of input locations having particular lengths and orientations of their displacement vector.
The influence on the accuracy of emulation of regression functions and error in parameter estimation appears to be substantially less than the influence of the \emph{space-filling} properties ``small fill distance'' and ``large separation distance''.

The results presented in Theorems \ref{nominalTheorem}, \ref{numericTheorem}, \ref{paramThm}, and their subsequent discussions are generally well-aligned with \emph{distance-based} design criteria, such as minimax (minimize the maximum distance from inputs of interest to the design) and maximin (maximize the minimum distance between design points).
On their surface, the results appear to be somewhat less well-aligned with designs which emphasize \emph{low-dimensional projections}, such as Latin hypercube \cite{mckay}, orthogonal array-based Latin hypercube \cite{tang}, or MaxPro \cite{maxpro}. 
On the other hand, 
when some inputs to the unknown target function are inert, having little or no impact on the response, 
the presented results indicate that the corresponding correlation parameters (here described as inverse bandwidths) are near zero and high-quality designs should have space-fillingness in the relevant lower-dimensional projections.
Throughout this work, it has been tacitly assumed that a relatively accurate \emph{a priori} guess for the correlation parameters is available. 
Explicitly handling of \emph{a priori} uncertainty in correlation parameters could form an important extension of this work. 
In the case where one expects several inert variables, a design with good projection properties might reasonably be expected.

As a brief exploration, 
several of the proposed \emph{high-quality} designs were compared to one another and a spectrum of designs from the literature, in stationary, non-stationary, non-trivial regression functions, and inert inputs settings. In particular, five examples were considered, with results presented in Table 1. The \emph{Stationary Example} follows the setting illustrated in the left panel of Figure \ref{Design}, $\Psi({\bf u},{\bf v})=\exp\{-\|{\bf u}-{\bf v}\|_2^2\}$. 
The \emph{Non-Stationary Nominal Example} follows the
non-stationary example 
illustrated in the middle panel of Figure \ref{Design}, $\varphi(d)={\rm exp}\{-d^2\}$, 
$\omega_1({\bf u})^2=1-\|{\bf u}\|^2/2$, $\omega_2({\bf u})^2=\|{\bf u}\|^2/2$,
$\boldsymbol{\Theta}_1=1\cdot{\bf I}_2$, $\boldsymbol{\Theta}_2=10\cdot{\bf I}_2$.
The \emph{Regression Functions Example} follows the setting illustrated in the right panel of Figure \ref{Design}, $\Psi({\bf u},{\bf v})=\exp\{-\|{\bf u}-{\bf v}\|_2^2\}$ and a linear regression function for each dimension. 
The \emph{Non-Stationary Numeric Example} follows the 
non-stationary example illustrated in the right panel of Figure \ref{NumDesign}, $\varphi(d)$ Wendland's kernel with $k=10$, 
$\omega_1({\bf u})^2=1-\|{\bf u}\|^2/2$, $\omega_2({\bf u})^2=\|{\bf u}\|^2/2$,
$\boldsymbol{\Theta}_1=0.1\cdot{\bf I}_2$, $\boldsymbol{\Theta}_2=1\cdot{\bf I}_2$.
The \emph{${\bf x}_2$ Inert Example} follows a setting where the unknown function does not depend on ${\bf x}_2$, $\Psi({\bf u},{\bf v})=\exp\{-({u}_1-{v}_2)^2\}$.
Compared designs include \emph{Nominal}, \emph{Nominal Non-Stationary}, and \emph{Regression} shown respectively in the left, middle, and right panels of Figure \ref{Design}, \emph{Numeric} and \emph{Numeric Non-Stationary} shown respectively in the left and right panels of Figure \ref{NumDesign}, and \emph{Parameter Estimation} shown in the right panel of Figure \ref{ParamDesign}.
Designs from the literature include 
random uniform design points,
random Latin hypercube, maximin Latin hypercube, S-optimal \cite{lauter} Latin hypercube, and MaxPro \cite{maxpro}.
Comparison was performed by generating 500 draws from the corresponding Gaussian process with mean zero, then for each generating a random uniform design, a random Latin hypercube, a maximin Latin hypercube, an S-optimal Latin hypercube, and a MaxPro design as well as a 100 point random uniform testing set. 
For each Gaussian process draw emulators were built, for both fixed, known correlation parameters and estimated parameters, predictions generated on the testing set, and the \emph{maximum} squared prediction error computed. 
The \texttt{R} packages \texttt{SLHD} \cite{SLHD}, \texttt{MaxPro} \cite{maxpropackage}, \texttt{mlegp} \cite{mlegp}, and \texttt{CGP} \cite{cgppackage} were used for generating Latin hypercube and MaxPro designs, fitting \emph{stationary} Gaussian process emulators with \emph{estimated correlation parameters}, and fitting composite Gaussian process emulators with \emph{estimated correlation parameters}, respectively. All computation was performed in \texttt{R} 3.1.1 (R Core Team, Vienna, Austria).

Results are summarized in Table 1.
Broadly, the proposed high-quality designs, as well as the MaxPro designs, performed well across situations, while random uniform design points,
random Latin hypercube, maximin Latin hypercube, S-optimal Latin hypercube performed relatively poorly.
There are a handful of deviations from and nuances to this basic pattern.
First, designs which are tailor-built for non-stationarity \emph{can} outperform space-filling designs (nominal and numeric/minimax and maximin), but do not always outperform them in practice when nominal, numeric, and potentially parameter estimation sources of inaccuracy come into play simultaneously. In particular, the influence of parameter estimation in the context of the composite Gaussian process is not clear, and could be a fruitful arena for future research.
Second, in a situation where inert variables are expected, designs with very poor projection properties (such as the stationary numeric design) should be avoided.
Poor performance of designs which emphasize low-dimensional projections is almost certainly \emph{partly} due to the fact that no low-order functional ANOVA is present in the Gaussian process in the first four examples.
However, lower dimensional projections of the \emph{distance-based} criteria may be the relevant quantities.
On the other hand, the results in Theorem \ref{discrepancyThm} indicate that the nominal and parameter estimation sources of inaccuracy can be controlled via the \emph{star discrepancy}.

\begin{table}[h]
\caption{Comparisons of \emph{expected maximum} squared error of emulators based on nominal, numeric, parameter estimation, random uniform, random Latin hypercube, maximin Latin hypercube, S-optimal Latin hypercube, and MaxPro designs, potentially along with nominal non-stationary, regression, and numeric non-stationary, for several situations, stationary covariance, linear regression functions, non-stationary covariance, and inert prediction variables.}\label{comparisonTable}
\resizebox{\textwidth}{!}{
\begin{tabular}{ccccccc}
\multicolumn{3}{c}{\bf Stationary Example}&&\multicolumn{3}{c}{\bf Non-Stationary Nominal Example}\\
Design&True Parameters&Estimated Parameters&&Design&True Parameters&Estimated Parameters\\
\cline{1-3}\cline{5-7}
Nominal&2.54$\times10^{-9}$&7.86$\times10^{-6}$&&Nominal&5.02&5.65\\
Numeric&4.48$\times10^{-10}$&2.65$\times10^{-6}$&&Numeric&7.27&6.47\\
Parameter Estimation&5.56$\times10^{-10}$&3.15$\times10^{-6}$&&Parameter Estimation&6.29&5.58\\
Random Uniform&4.67$\times10^{-6}$&1.20$\times10^{-1}$&&Random Uniform&9.13&15.24\\
Random LH&1.81$\times10^{-6}$&7.98$\times10^{-5}$&&Random LH&8.79&11.90\\
Maximin LH&5.06$\times10^{-8}$&2.78$\times10^{-5}$&&Maximin LH&7.13&9.12\\
S-Optimal LH&5.50$\times10^{-8}$&2.89$\times10^{-5}$&&S-Optimal LH&7.69&9.23\\
MaxPro&8.67$\times10^{-8}$&4.58$\times10^{-5}$&&MaxPro&6.37&6.69\\
&&&&Nominal Non-Stationary&4.57&10.04\\[0.2cm]
\multicolumn{3}{c}{\bf Regression Functions Example}&&\multicolumn{3}{c}{\bf Non-Stationary Numeric Example}\\
Design&True Parameters&Estimated Parameters&&Design&True Parameters&Estimated Parameters\\
\cline{1-3}\cline{5-7}
Nominal&5.13$\times10^{-2}$&1.25$\times10^{-2}$&&Nominal&7.13$\times 10^{-1}$&3.52\\
Numeric&8.52$\times10^{-3}$&4.56$\times 10^{-3}$&&Numeric&1.07&3.54\\
Parameter Estimation&1.41$\times10^{-2}$&2.48$\times 10^{-2}$&&Parameter Estimation&1.19&2.17\\
Random Uniform&9.00$\times10^{-1}$&1.19&&Random Uniform&6.80&1.63$\times 10^{1}$\\
Random LH&4.38$\times10^{-1}$&8.33$\times 10^{-2}$&&Random LH&5.05&1.08$\times 10^{1}$\\
Maximin LH&1.75$\times10^{-1}$&3.69$\times 10^{-2}$&&Maximin LH&2.66&7.30\\
S-Optimal LH&2.10$\times10^{-1}$&3.24$\times 10^{-1}$&&S-Optimal LH&3.17&7.93\\
MaxPro&1.20$\times10^{-1}$&2.85$\times 10^{-2}$&&MaxPro&1.38&3.53\\
Regression&1.43$\times10^{-2}$&6.35$\times 10^{-2}$&&Numeric Non-Stationary&9.30$\times 10^{-1}$&1.87\\[0.2cm]
\multicolumn{3}{c}{\bf ${\bf x}_2$ Inert Example}&&\multicolumn{3}{c}{}\\
Design&True Parameters&Estimated Parameters&&& & \\
\cline{1-3}
Nominal&3.34$\times10^{-19}$&2.23$\times10^{-5}$&&&&\\
Numeric&3.78$\times10^{-19}$&7.18&&&&\\
Parameter Estimation&2.56$\times10^{-19}$&4.83$\times 10^{-6}$&&&&\\
Random Uniform&1.10$\times10^{-11}$&3.20&&&&\\
Random LH&4.23$\times10^{-18}$&1.71$\times 10^{-3}$&&&&\\
Maximin LH&1.33$\times10^{-18}$&2.67$\times 10^{-7}$&&&&\\
S-Optimal LH&3.66$\times10^{-18}$&2.37$\times 10^{-5}$&&&&\\
MaxPro&1.81$\times10^{-19}$&2.30$\times 10^{-8}$&&&&
\end{tabular}}
\end{table}

This work has a several limitations. All results are in terms of \emph{controlling} error rates with upper bounds. Actual error rates (of the nominal, numeric, or parameter estimation variety) could be substantially less in a particular situation. 
In a specific practical context, a sequential approach to data collection may be appropriate, see for example \cite{jones} or \cite{gramacy}.
Further, no consideration is given to numeric error in parameter estimation and this error could be substantial, especially if the design is poor with respect to information about the parameters. 
However, given the secondary importance of experimental design properties \emph{specific} to parameter estimation, this source of error is not expected to strongly impact the error in interpolation. Also, the discussed model for non-stationarity 
is capable of approximating 
only non-constant correlation decay across the input space and, in particular, does not allow non-constant underlying variability in the Gaussian process model. However, non-constant underlying variability can be modeled as $\Psi({\bf u},{\bf v})=\sigma({\bf u})\sigma({\bf v})\Phi({\bf u}-{\bf v})$ and this non-stationary model behaves intuitively, with regions having more underlying variability requiring a higher density of points than regions having relatively less variability. The results follow in a manner similar to non-stationarity in correlation, although they are in fact simpler, and this development is omitted due to space constraints. 
Lastly, the impact on interpolator accuracy of a 
number important modeling and design considerations, such as 
low-order functional ANOVAs, orthogonality of inputs \cite{bingham}, or mixed categorical and quantitative input variables \cite{qian} has not been examined.

\appendix


\section{Proof of Proposition \ref{nominalProp}}\label{nominalPropProof}

Express ${\rm MSPE}_2$ from equation (\ref{MSPE}) in terms of partitioned matrices,
\begin{equation}
\begin{split}
& {\rm MSPE}_2 =  \Psi_{\boldsymbol{\theta}}({\bf x},{\bf x})  - \begin{pmatrix} {\bf a}_1' & {\bf a}_2' \end{pmatrix} 
\begin{pmatrix} {\bf B}_{11} & {\bf B}_{12} \\ {\bf B}_{21} & {\bf B}_{22} \end{pmatrix}^{-1} 
\begin{pmatrix} {\bf a}_1 \\ {\bf a}_2 \end{pmatrix}, \label{Partition_matrix}
\end{split}
\end{equation}
where 
\begin{gather}
\begin{split}
&\hspace{0.25 in}{\bf a}_1 = \begin{pmatrix} {\bf h(x)} \\ \Psi_{\boldsymbol{\theta}}({\bf X}_1,{\bf x}) \end{pmatrix},\quad 
{\bf a}_2 = \Psi_{\boldsymbol{\theta}}({\bf X}_2^*,{\bf x}), \quad{\bf B}_{11} = \begin{pmatrix} {\bf 0} &  {\bf H}({\bf X}_1)' \\ {\bf H}({\bf X}_1) & {\Psi_{\boldsymbol{\theta}}({\bf X}_1,{\bf X}_1)} \end{pmatrix},\\
&{\bf B}_{12} = \begin{pmatrix} {\bf H}({\bf X}_2^*)' \\ \Psi_{\boldsymbol{\theta}}({\bf X}_1,{\bf X}_2^*) \end{pmatrix},\quad {\bf B}_{21}={\bf B}_{12}', \quad {\bf B}_{22} = \Psi_{\boldsymbol{\theta}}({\bf X}_2^*,{\bf X}_2^*), \quad\text{and}\quad{\bf X}_2^*={\bf X}_2\setminus{\bf X}_1.\nonumber
\end{split}
\end{gather}
Applying partitioned matrix inverse results \cite{harville} and simplifying (\ref{Partition_matrix}) gives,
\begin{equation*} \nonumber
\begin{split}
{\rm MSPE}_2 &={\rm MSPE}_1 - ({\bf a}_2 - {\bf B}_{21} {\bf B}_{11}^{-1}{\bf a}_1)' \ {\bf B}_{22\cdot1}^{-1} \ ({\bf a}_2 - {\bf B}_{21} {\bf B}_{11}^{-1}{\bf a}_1),
\end{split}
\end{equation*}
where ${\bf B}_{22\cdot1}  = {\bf B}_{22} - {\bf B}_{21}{\bf B}_{11}^{-1}{\bf B}_{12}$.
Now, the proof is completed by showing that ${\bf B}_{22\cdot1}$ is non-negative definite (${\bf B}_{22\cdot1}\succeq 0$).
Once again applying partitioned matrix inverse results gives,
\begin{equation*}
\begin{split}
{\bf B}_{22\cdot1}  &  = \Psi_{\boldsymbol	{\theta}}({\bf X}_2^* , {\bf X}_2^*) - \begin{pmatrix} {{\bf H}({\bf X}_2^*)} & \Psi_{\boldsymbol{\theta}}({{\bf X}_2^* , {\bf X}_1})  \end{pmatrix}
\begin{pmatrix} {\bf 0} & {\bf H}({\bf X}_1)' \\ {\bf H}({\bf X}_1) & \Psi_{\boldsymbol{\theta}}({{\bf X}_1,{\bf X}_1}) \end{pmatrix}^{-1}
\begin{pmatrix} {\bf H}({\bf X}_2^*)' \\  \Psi_{\boldsymbol{\theta}}({{\bf X}_1 , {\bf X}_2^*})\end{pmatrix} \\
&  = \Psi_{\boldsymbol{\theta}}({{\bf X}_2^* , {\bf X}_2^*}) - \Psi_{\boldsymbol{\theta}}({{\bf X}_2^*,{\bf X}_1})\Psi_{\boldsymbol{\theta}}({{\bf X}_1,{\bf X}_1})^{-1}\Psi_{\boldsymbol{\theta}}({{\bf X}_1,{\bf X}_2^*}) \\
& \qquad + \left({\bf H}({\bf X}_2^*) - \Psi_{\boldsymbol{\theta}}({{\bf X}_2^*,{\bf X}_1})\Psi_{\boldsymbol{\theta}}({{\bf X}_1,{\bf X}_1})^{-1}{\bf H}({\bf X}_1) \right)  \left({\bf H}({\bf X}_1)'\Psi_{\boldsymbol{\theta}}({{\bf X}_1,{\bf X}_1})^{-1} 
{\bf H}({\bf X}_1)\right)^{-1} \\
& \hspace{0.5 in} \times \left({\bf H}({\bf X}_2^*) - \Psi_{\boldsymbol{\theta}}({{\bf X}_2^*,{\bf X}_1})\Psi_{\boldsymbol{\theta}}({{\bf X}_1,{\bf X}_1})^{-1}{\bf H}({\bf X}_1) \right)'
\end{split}
\end{equation*}
The first two terms are non-negative definite because they represent a conditional variance. The third term is non-negative definite since $ {\bf H}({\bf X}_1)'\Psi_{\boldsymbol{\theta}}{\bf ({\bf X}_1,{\bf X}_1)}^{-1} {\bf H}({\bf X}_1)\succeq 0$ .

\section{Proof of Theorem \ref{nominalTheorem}}\label{nominalTheoremProof}

For ${\bf x}_i\in{\bf X}$, an arbitrary set $A\subset\Omega$, and positive definite function $\Psi_{\boldsymbol{\theta}}$, the uppermost terms in (\ref{MSPErewritten}) can be \emph{locally} bounded as
\begin{gather}
\begin{split}
&\sup_{{\bf x}\in A}\Psi_{\boldsymbol{\theta}}({\bf x},{\bf x})-\Psi_{\boldsymbol{\theta}}({\bf x},{\bf X})\Psi_{\boldsymbol{\theta}}({\bf X},{\bf X})^{-1}\Psi_{\boldsymbol{\theta}}({\bf X},{\bf x})\\
&=\sup_{{\bf x}\in A} \Psi_{\boldsymbol{\theta}}({\bf x},{\bf x})- \left[\left(\Psi_{\boldsymbol{\theta}}({\bf x},{\bf X})-\Psi_{\boldsymbol{\theta}}({\bf x}_i,{\bf X})\right)\Psi_{\boldsymbol{\theta}}({\bf X},{\bf X})^{-1}\left(\Psi_{\boldsymbol{\theta}}({\bf X},{\bf x})-\Psi_{\boldsymbol{\theta}}({\bf X},{\bf x}_i)\right)\right.\\
&\quad\quad\quad\left.+2\Psi_{\boldsymbol{\theta}}({\bf x}_i,{\bf X})\Psi_{\boldsymbol{\theta}}({\bf X},{\bf X})^{-1}\Psi_{\boldsymbol{\theta}}({\bf X},{\bf x})-\Psi_{\boldsymbol{\theta}}({\bf x}_i,{\bf X})\Psi_{\boldsymbol{\theta}}({\bf X},{\bf X})^{-1}\Psi_{\boldsymbol{\theta}}({\bf X},{\bf x}_i)\right]\\
&=\sup_{{\bf x}\in A} \Psi_{\boldsymbol{\theta}}({\bf x},{\bf x})
-2\Psi_{\boldsymbol{\theta}}({\bf x}_i,{\bf x})
+\Psi_{\boldsymbol{\theta}}({\bf x}_i,{\bf x}_i)\\
&\quad\quad\quad-\left(\Psi_{\boldsymbol{\theta}}({\bf x},{\bf X})-\Psi_{\boldsymbol{\theta}}({\bf x}_i,{\bf X})\right)\Psi_{\boldsymbol{\theta}}({\bf X},{\bf X})^{-1}\left(\Psi_{\boldsymbol{\theta}}({\bf X},{\bf x})-\Psi_{\boldsymbol{\theta}}({\bf X},{\bf x}_i)\right)\\
&\le\sup_{{\bf x}\in A} \Psi_{\boldsymbol{\theta}}({\bf x},{\bf x})
-2\Psi_{\boldsymbol{\theta}}({\bf x}_i,{\bf x})
+\Psi_{\boldsymbol{\theta}}({\bf x}_i,{\bf x}_i)-\frac{\|\Psi_{\boldsymbol{\theta}}({\bf X},{\bf x})-\Psi_{\boldsymbol{\theta}}({\bf X},{\bf x}_i)\|_2^2}{\lambda_{\rm max}(\Psi_{\boldsymbol{\theta}}({\bf X},{\bf X}))}\\
&\le\sup_{{\bf x}\in A} \Psi_{\boldsymbol{\theta}}({\bf x},{\bf x})
-2\Psi_{\boldsymbol{\theta}}({\bf x}_i,{\bf x})
+\Psi_{\boldsymbol{\theta}}({\bf x}_i,{\bf x}_i)-
\frac{\left(\Psi_{\boldsymbol{\theta}}({\bf x}_i,{\bf x})-\Psi_{\boldsymbol{\theta}}({\bf x}_i,{\bf x}_i)\right)^2}{n\sup_{{\bf u},{\bf v}\in\Omega}\Psi_{\boldsymbol{\theta}}({\bf u},{\bf v})},\label{uppermostTermsBound}
\end{split}
\end{gather}
where the first equality follows by cancellation of terms, the second equality follows from the fact that $\Psi_{\boldsymbol{\theta}}({\bf X},{\bf X})^{-1}\Psi_{\boldsymbol{\theta}}({\bf X},{\bf x}_i)$ equals the $i^{\rm th}$ component of the $n$ dimensional identity, the first inequality follows from ${\bf a}'{\bf B}^{-1}{\bf a}\ge\lambda_{\rm min}({\bf B}^{-1})\|{\bf a}\|_2^2$ and $\lambda_{\rm min}({\bf B}^{-1})=1/\lambda_{\rm max}({\bf B})$, and
the second inequality is true since a sum of squares $\|\cdot\|_2^2$ is larger than any one of its elements squared and Gershgorin's theorem \cite{varga} implies 
\begin{gather}
\lambda_{\rm max}(\Psi_{\boldsymbol{\theta}}({\bf X},{\bf X}))\le\max_j\sum_{i=1}^n\Psi_{\boldsymbol{\theta}}({\bf x}_i,{\bf x}_j)\le n\sup_{{\bf u},{\bf v}\in\Omega}\Psi_{\boldsymbol{\theta}}({\bf u},{\bf v}).\label{gershgorin}
\end{gather}
Then for $\Psi_{\boldsymbol{\theta}}({\bf x},{\bf x})=\sigma^2$ across ${\bf x}\in\Omega$,
and $k=n\sup_{{\bf u},{\bf v}\in\Omega}\Psi_{\boldsymbol{\theta}}({\bf u},{\bf v})$, the right-hand side of (\ref{uppermostTermsBound}) can be rewritten as
\begin{gather}
\frac{1}{k}\sup_{{\bf x}\in A}\left(\sigma^2-\Psi_{\boldsymbol{\theta}}({\bf x}_i,{\bf x})\right)\left(2k-\sigma^2+\Psi_{\boldsymbol{\theta}}({\bf x}_i,{\bf x})\right).\label{intermediateNominalQuadratic}
\end{gather}
Expression (\ref{intermediateNominalQuadratic}) is a concave down quadratic in $\Psi_{\boldsymbol{\theta}}({\bf x}_i,{\bf x})$ with axis of symmetry $\sigma^2-k$, which is $\le 0$ for $n\ge 1$. That is, (\ref{uppermostTermsBound}) is bounded above by
\begin{gather}
\frac{1}{k}\left(\sigma^2-\inf_{{\bf x}\in A}\Psi_{\boldsymbol{\theta}}({\bf x}_i,{\bf x})\right)\left(2k-\sigma^2+\inf_{{\bf x}\in A}\Psi_{\boldsymbol{\theta}}({\bf x}_i,{\bf x})\right).\nonumber
\end{gather}
If $\Omega\subseteq\cup_{i=1}^n A_i$ and ${\bf x}_i\in A_i$, then 
\begin{gather}
\begin{split}
&\sup_{{\bf x}\in\Omega}\Psi_{\boldsymbol{\theta}}({\bf x},{\bf x})-\Psi_{\boldsymbol{\theta}}({\bf x},{\bf X})\Psi_{\boldsymbol{\theta}}({\bf X},{\bf X})^{-1}\Psi_{\boldsymbol{\theta}}({\bf X},{\bf x})\\
&\le\frac{1}{k}\left(\sigma^2-\min_i\inf_{{\bf x}\in A_i}\Psi_{\boldsymbol{\theta}}({\bf x}_i,{\bf x})\right)\left(2k-\sigma^2+\min_i\inf_{{\bf x}\in A_i}\Psi_{\boldsymbol{\theta}}({\bf x}_i,{\bf x})\right).\nonumber
\end{split}
\end{gather}

\section{Proof of Theorem \ref{discrepancyThm}}\label{discrepancyThmProof}

If $f\sim{\rm GP}(0,\Psi(\cdot,\cdot))$, then 
$f$ can be represented as $f=\sum_{i=1}^\infty\langle f,\varphi_i\rangle\varphi_i$, where 
$\langle \cdot,\cdot\rangle$ is the inner product in $L_2([0,1]^d)$, 
$\langle f,\varphi_i\rangle\stackrel{\rm ind.}{\sim}\mathcal{N}(0,\lambda_i)$,
and 
$\Psi(\cdot,\cdot)$ has eigenvalue, eigenfunction decomposition $\Psi(\cdot,\cdot)=\sum_{i=1}^\infty\lambda_i\varphi_i(\cdot)\varphi_i(\cdot)$.
Further, $\hat{f}({\bf x})=\Psi({\bf x},{\bf X})\Psi({\bf X},{\bf X})^{-1}f({\bf X})$ minimizes the mean squared prediction error (MSPE) $\mathbb{E}\left(f({\bf x})-\hat{f}({\bf x})\right)^2$ over functions of the data ${\bf X},\;f({\bf X})$.
Consider another predictor based on the data 
\begin{gather}
\begin{split}
&\hat{f}_*(x)=\sum_{i=1}^n\hat{u}_i\varphi_i(x),\quad\hat{u}_i=\frac{1}{n}\sum_{j=1}^n \varphi_i(x_j)f(x_j).\nonumber
\end{split}
\end{gather}
Then, the \emph{integrated} MSPE of $\hat{f}$ is bounded above by the \emph{integrated} MSPE  of $\hat{f}_*$,
\begin{gather}
\begin{split}
\mathbb{E}\|f-\hat{f}_*\|_{L_2([0,1]^d)}^2&=\mathbb{E}\left\|\sum_{i=1}^\infty\langle f,\varphi_i\rangle\varphi_i(\cdot)-\sum_{i=1}^n\hat{u}_i\varphi_i(\cdot)\right\|_{L_2([0,1]^d)}^2\\
&=\sum_{j=n+1}^\infty\lambda_j+\sum_{j=1}^n\mathbb{E}\left(\int_{[0,1]^d} \varphi_i(x)f(x){\rm d}x-\frac{1}{n}\sum_{j=1}^n \varphi_i(x_j)f(x_j)\right)^2.\nonumber
\end{split}
\end{gather}
Respective Koksma-Hlawka and modulus of continuity bounds on the error rate for numeric integration \cite{niederreiter} give the bounds
\begin{gather}
\begin{split}
&\left|\int_{[0,1]^d} \varphi_i(x)f(x){\rm d}x-\frac{1}{n}\sum_{j=1}^n \varphi_i(x_j)f(x_j)\right|\le V(\varphi_i f)D^*(X),\\
&\left|\int_{[0,1]^d} \varphi_i(x)f(x){\rm d}x-\frac{1}{n}\sum_{j=1}^n \varphi_i(x_j)f(x_j)\right|\le 4\omega(\varphi_i f,D^*(X)^{1/d}),
\nonumber
\end{split}
\end{gather}
where $V(\cdot)$ denotes the total variation in the sense of Hardy and Kraus of $f$, $D^*(X)$ denotes the star discrepancy, and $\omega$ denotes the modulus of continuity \cite{niederreiter}.
%

\section{Proof of Proposition \ref{prop31}}\label{prop31proof}

We will make use of the following lemma on the accuracy of floating point matrix inversion which is a combination of Lemmas 2.7.1 and 2.7.2 in
\cite{golub}.
\begin{lemma}\label{floatinglemma}
Suppose ${\bf Ax}={\bf b}$ and $\tilde{\bf A}\tilde{\bf x}=\tilde{\bf b}$ with $\|{\bf A}-\tilde{\bf A}\|_2\le\delta\|{\bf A}\|_2$, $\|{\bf b}-\tilde{\bf b}\|_2\le\delta\|{\bf b}\|_2$, and $\kappa({\bf A})=r/\delta<1/\delta$ for some $\delta>0$. Then, $\tilde{\bf A}$ is non-singular,
\begin{gather}
\begin{split}
\frac{\|\tilde{\bf x}\|_2}{\|{\bf x}\|_2}\le\frac{1+r}{1-r},\quad{\rm and}\quad
\frac{\|{\bf x}-\tilde{\bf x}\|_2}{\|{\bf x}\|_2}\le\frac{2\delta}{1-r}\kappa({\bf A}),\nonumber
\end{split}
\end{gather}
where $\kappa({\bf A})=\|{\bf A}\|_2\|{\bf A}^{-1}\|_2$.
\end{lemma}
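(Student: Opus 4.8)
The plan is to treat the perturbed system as a small relative perturbation of the exact one and to exploit the factorization $\tilde{\bf A}={\bf A}({\bf I}+{\bf A}^{-1}{\bf E})$, where ${\bf E}=\tilde{\bf A}-{\bf A}$. Writing also ${\bf e}=\tilde{\bf b}-{\bf b}$, the hypotheses read $\|{\bf E}\|_2\le\delta\|{\bf A}\|_2$ and $\|{\bf e}\|_2\le\delta\|{\bf b}\|_2$, and the key quantity throughout will be
\[
\|{\bf A}^{-1}{\bf E}\|_2\le\|{\bf A}^{-1}\|_2\|{\bf E}\|_2\le\delta\|{\bf A}^{-1}\|_2\|{\bf A}\|_2=\delta\kappa({\bf A})=r<1 ,
\]
where the final strict inequality is exactly the hypothesis $\kappa({\bf A})=r/\delta<1/\delta$.

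First I would establish nonsingularity of $\tilde{\bf A}$. Since $\|{\bf A}^{-1}{\bf E}\|_2\le r<1$, the Neumann series shows that ${\bf I}+{\bf A}^{-1}{\bf E}$ is invertible; because ${\bf A}$ is invertible and $\tilde{\bf A}={\bf A}({\bf I}+{\bf A}^{-1}{\bf E})$, it follows that $\tilde{\bf A}$ is nonsingular and $\tilde{\bf x}$ is well defined.

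Next I would subtract the two systems. From $({\bf A}+{\bf E})\tilde{\bf x}={\bf b}+{\bf e}$ and ${\bf Ax}={\bf b}$ one obtains the residual identity ${\bf A}(\tilde{\bf x}-{\bf x})={\bf e}-{\bf E}\tilde{\bf x}$, hence $\tilde{\bf x}-{\bf x}={\bf A}^{-1}({\bf e}-{\bf E}\tilde{\bf x})$. Using submultiplicativity, the two perturbation bounds, and ${\bf b}={\bf Ax}$ so that $\|{\bf b}\|_2\le\|{\bf A}\|_2\|{\bf x}\|_2$, this gives
\[
\|\tilde{\bf x}-{\bf x}\|_2\le\|{\bf A}^{-1}\|_2\bigl(\|{\bf e}\|_2+\|{\bf E}\|_2\|\tilde{\bf x}\|_2\bigr)\le\delta\kappa({\bf A})\bigl(\|{\bf x}\|_2+\|\tilde{\bf x}\|_2\bigr)=r\bigl(\|{\bf x}\|_2+\|\tilde{\bf x}\|_2\bigr).
\]

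The main (though mild) obstacle is that $\tilde{\bf x}$ appears on both sides of this inequality, so it does not yet bound either ratio directly. I would resolve this with the reverse triangle inequality $\|\tilde{\bf x}\|_2-\|{\bf x}\|_2\le\|\tilde{\bf x}-{\bf x}\|_2$, which combined with the display gives $(1-r)\|\tilde{\bf x}\|_2\le(1+r)\|{\bf x}\|_2$; since $r<1$ this is precisely the first claimed bound $\|\tilde{\bf x}\|_2/\|{\bf x}\|_2\le(1+r)/(1-r)$. Substituting this ratio back into the residual bound yields $\|\tilde{\bf x}-{\bf x}\|_2\le r\bigl(1+\tfrac{1+r}{1-r}\bigr)\|{\bf x}\|_2=\tfrac{2r}{1-r}\|{\bf x}\|_2$, and writing $r=\delta\kappa({\bf A})$ turns this into $\|\tilde{\bf x}-{\bf x}\|_2/\|{\bf x}\|_2\le\tfrac{2\delta}{1-r}\kappa({\bf A})$, which is the second claimed bound and completes the argument.
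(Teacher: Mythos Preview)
Your argument is correct and is precisely the standard perturbation-theory proof one finds in Golub and Van Loan. The paper itself does not supply a proof of this lemma at all; it simply states it as a combination of Lemmas~2.7.1 and~2.7.2 of \cite{golub}, so your write-up in fact fills in what the paper leaves to the reference.
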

Additionally, note that for (conformable) ${\bf a}$, $\tilde{\bf a}$, ${\bf b}$, and $\tilde{\bf b}$,
\begin{gather}
\begin{split}
&\left|{\bf a}'{\bf b}-\tilde{\bf a}'\tilde{\bf b}\right|=\left|{\bf a}'({\bf b}-\tilde{\bf b})-(\tilde{\bf a}-{\bf a})'\tilde{\bf b}\right|\\
&\le\left|{\bf a}'({\bf b}-\tilde{\bf b})\right|+\left|(\tilde{\bf a}-{\bf a})'\tilde{\bf b}\right|\le\|{\bf a}\|_2\|{\bf b}-\tilde{\bf b}\|_2+\|{\bf a}-\tilde{\bf a}\|_2\|\tilde{\bf b}\|_2.\label{lemmaSupplement}
\end{split}
\end{gather}
The inner portion of the numeric error in the second term in (\ref{decomposition2}) can be bounded as follows. Here, and throughout, ${\bf A}^{-1}{\bf b}$ and $\tilde{\bf A}^{-1}\tilde{\bf b}$ denote the solutions to the linear systems ${\bf A}{\bf x}={\bf b}$ and $\tilde{\bf A}\tilde{\bf x}=\tilde{\bf b}$, respectively, as opposed to the actual matrix multiplication.
The \emph{hats} on parameter estimates are suppressed for simplicity. In fact, all the below results hold for an arbitrary, fixed parameter or parameter estimate.
\begin{gather}
\begin{split}
&\left|\hat{f}_{\boldsymbol{\vartheta}}({\bf x})-\tilde{f}_{\boldsymbol{\vartheta}}({\bf x})\right|\\
&= \left| \left({\bf h}({\bf x})-\tilde{\bf h}({\bf x})\right)'\boldsymbol{\beta} - \left( \Psi_{\boldsymbol{\theta}}({\bf x},{\bf X})\Psi_{\boldsymbol{\theta}}({\bf X},{\bf X})^{-1}{\bf H}({\bf X})-\tilde{\Psi}_{\boldsymbol{\theta}}({\bf x},{\bf X})\tilde{\Psi}_{\boldsymbol{\theta}}({\bf X},{\bf X})^{-1}\tilde{\bf H}({\bf X}) \right)\boldsymbol{\beta}\right.\\
&\quad\;\left. + \left(\Psi_{\boldsymbol{\theta}}({\bf x},{\bf X})\Psi_{\boldsymbol{\theta}}({\bf X},{\bf X})^{-1}f({\bf X})-\tilde{\Psi}_{\boldsymbol{\theta}}({\bf x},{\bf X})\tilde{\Psi}_{\boldsymbol{\theta}}({\bf X},{\bf X})^{-1}\tilde{f}({\bf X})\right) \right|\\
&\le\|{\bf h}({\bf x})-\tilde{\bf h}({\bf x})\|_2\|\boldsymbol{\beta}\|_2+| \Psi_{\boldsymbol{\theta}}({\bf x,X})\Psi_{\boldsymbol{\theta}}({\bf X,X})^{-1}f({\bf X})-\tilde{\Psi}_{\boldsymbol{\theta}}({\bf x},{\bf X})\tilde{\Psi}_{\boldsymbol{\theta}}({\bf X},{\bf X})^{-1}\tilde{f}({\bf X})|\\
&\quad\;+\|\Psi_{\boldsymbol{\theta}}({\bf x},{\bf X})\Psi_{\boldsymbol{\theta}}({\bf X},{\bf X})^{-1}{\bf H}({\bf X})-\tilde{\Psi}_{\boldsymbol{\theta}}({\bf x},{\bf X})\tilde{\Psi}_{\boldsymbol{\theta}}({\bf X},{\bf X})^{-1}\tilde{\bf H}({\bf X})\|_2\|\boldsymbol{\beta}\|_2\\
&=\| {\bf h(x)}-\tilde{\bf h}({\bf x})\|_2\|\boldsymbol{\beta}\|_2+|\Psi_{\boldsymbol{\theta}}({\bf x},{\bf X})\Psi_{\boldsymbol{\theta}}({\bf X},{\bf X})^{-1}f({\bf X})-\tilde{\Psi}_{\boldsymbol{\theta}}({\bf x},{\bf X})\tilde{\Psi}_{\boldsymbol{\theta}}({\bf X},{\bf X})^{-1}\tilde{f}({\bf X})|\\
&\quad\;+\sqrt{\sum_{j=1}^p\left(\Psi_{\boldsymbol{\theta}}({\bf x},{\bf X})\Psi_{\boldsymbol{\theta}}({\bf X},{\bf X})^{-1}{h}_j({\bf X})-\tilde{\Psi}_{\boldsymbol{\theta}}({\bf x},{\bf X})\tilde{\Psi}_{\boldsymbol{\theta}}({\bf X},{\bf X})^{-1}\tilde{h}_j({\bf X}) \right)^2} \ \|\boldsymbol{\beta}\|_2,\label{numInequal1}
\end{split}
\end{gather}
where ${h}_j({\bf X})$ and $\tilde{h}_j({\bf X})$ denote the $j^{\rm th}$ regression function evaluated at ${\bf X}$ and its floating point approximation, respectively.
Let ${\bf u}={\Psi}_{\boldsymbol{\theta}}({\bf X},{\bf X})^{-1}f({\bf X})$, $\tilde{\bf u}=\tilde{\Psi}_{\boldsymbol{\theta}}({\bf X},{\bf X})^{-1}\tilde{f}({\bf X})$, ${\bf v}_j={\Psi}_{\boldsymbol{\theta}}({\bf X},{\bf X})^{-1}{h}_j({\bf X})$, and $\tilde{\bf v}_j=\tilde{\Psi}_{\boldsymbol{\theta}}({\bf X},{\bf X})^{-1}\tilde{h}_j({\bf X})$. Then, (\ref{numInequal1}) along with inequality (\ref{lemmaSupplement}) implies
\begin{gather}
\begin{split}
&\left|\hat{f}_{\boldsymbol{\vartheta}}({\bf x})-\tilde{f}_{\boldsymbol{\vartheta}}({\bf x})\right|\\
&\le\|{\bf h}({\bf x})-\tilde{\bf h}({\bf x})\|_2\|\boldsymbol{\beta}\|_2+
\|\Psi_{\boldsymbol{\theta}}({\bf x},{\bf X})\|_2\|{\bf u}-\tilde{\bf u}\|_2+\|\Psi_{\boldsymbol{\theta}}({\bf x},{\bf X})-\tilde{\Psi}_{\boldsymbol{\theta}}({\bf x},{\bf X})\|_2\|\tilde{\bf u}\|_2\\
&\quad\;+\sqrt{\sum_{j=1}^p\left(\|\Psi_{\boldsymbol{\theta}}({\bf x},{\bf X})\|_2\|{\bf v}_j-\tilde{\bf v}_j\|_2+\|\Psi_{\boldsymbol{\theta}}({\bf x},{\bf X})-\tilde{\Psi}_{\boldsymbol{\theta}}({\bf x},{\bf X})\|_2\|\tilde{\bf v}_j\|_2 \right)^2} \ \|\boldsymbol{\beta}\|_2.\label{numInequal2}
\end{split}
\end{gather}
Under Assumption \ref{numAssumption}, Lemma \ref{floatinglemma} can be applied to (\ref{numInequal2}) to obtain
\begin{gather}
\begin{split}
&\left|\hat{f}_{\boldsymbol{\vartheta}}({\bf x})-\tilde{f}_{\boldsymbol{\vartheta}}({\bf x})\right|\\
&\le\delta\|{\bf h}({\bf x})\|_2\|\boldsymbol{\beta}\|_2+
\|\Psi_{\boldsymbol{\theta}}({\bf x},{\bf X})\|_2\frac{2\delta}{1-r}\kappa(\Psi_{\boldsymbol{\theta}}({\bf X},{\bf X}))\|{\bf u}\|_2+
\delta\|\Psi_{\boldsymbol{\theta}}({\bf x},{\bf X})\|_2\frac{1+r}{1-r}\|{\bf u}\|_2\\
&\quad\;+\sqrt{\sum_{j=1}^p\biggl(\|\Psi_{\boldsymbol{\theta}}({\bf x},{\bf X})\|_2\frac{2\delta}{1-r}\kappa(\Psi_{\boldsymbol{\theta}}({\bf X},{\bf X}))\|{\bf v}_j\|_2+\delta\|\Psi_{\boldsymbol{\theta}}({\bf x},{\bf X})\|_2\frac{1+r}{1-r}\|{\bf v}_j\|_2 \biggr)^2} \ \|\boldsymbol{\beta}\|_2.\nonumber
\end{split}
\end{gather}
Note that $\|{\bf u}\|_2=\|\Psi_{\boldsymbol{\theta}}({\bf X},{\bf X})^{-1}f({\bf X})\|_2\le\|\Psi_{\boldsymbol{\theta}}({\bf X},{\bf X})^{-1}\|_2\|f({\bf X})\|_2=\|f({\bf X})\|_2/\lambda_{\rm min}(\Psi_{\boldsymbol{\theta}}({\bf X},{\bf X}))$. Similarly, $\|{\bf v}_j\|_2\le\|h_j({\bf X})\|_2/\lambda_{\rm min}(\Psi_{\boldsymbol{\theta}}({\bf X},{\bf X}))$.
Using these facts along with $r<1$ and grouping terms gives the proposition.

\section{Proof of Theorem \ref{numericTheorem}}\label{numericTheoremProof}

For a continuous, positive definite, translation invariant kernel $\Phi$ which has Fourier transform $\hat{\Phi}\in L_1(\mathbb{R}^d)$,
\begin{gather}
\begin{split}
&\sum_{j=1}^n\sum_{k=1}^n\alpha_j\alpha_k\Phi({\bf x}_j-{\bf x}_k)=(2\pi)^{-d/2}\sum_{j=1}^n\sum_{k=1}^n\alpha_j\alpha_k \int_{\mathbb{R}^d} e^{i\boldsymbol{\omega}'({\bf x}_j-{\bf x}_k)}\hat{\Phi}(\boldsymbol{\omega}){\rm d}\boldsymbol{\omega}\\
&=(2\pi)^{-d/2}\int_{\mathbb{R}^d} \left|\sum_{j=1}^n\alpha_j e^{i\boldsymbol{\omega}'{\bf x}_j}\right|^2\hat{\Phi}(\boldsymbol{\omega}){\rm d}\boldsymbol{\omega},\label{quadraticRepresentation}
\end{split}
\end{gather}
for $\boldsymbol{\alpha}\in\mathbb{R}^n$, ${\bf x}_i\in\mathbb{R}^d$. Representation (\ref{quadraticRepresentation}) implies that a lower bound for $\sum_{j,k}\alpha_j\alpha_k\Phi({\bf x}_j-{\bf x}_k)$ is provided by  $\sum_{j,k}\alpha_j\alpha_k\Upsilon({\bf x}_j-{\bf x}_k)$, where $\Upsilon$ has $\hat{\Upsilon}(\boldsymbol{\omega})\le\hat{\Phi}(\boldsymbol{\omega})$. 
Consider $\Upsilon_M$ with 
\begin{gather}
\hat{\Upsilon}_M(\boldsymbol{\omega})=\frac{\hat{\Phi}_*(M)\Gamma(d/2+1)}{2^d M^d \pi^{d/2}}(\chi_M*\chi_M)(\boldsymbol{\omega}),\nonumber
\end{gather}
where $M>0$, $\hat{\Phi}_*(M)=\inf_{\|\boldsymbol{\omega}\|_2\le 2M}\hat{\Phi}(\boldsymbol{\omega})$,  $\chi_M(\boldsymbol{\omega})=1$ for $\|\boldsymbol{\omega}\|_2\le M$ and $0$ otherwise, and $*$ denotes the convolution operator 
\begin{gather}
(f*g)({\bf x})=\int_{\mathbb{R}^d}f({\bf y})g({\bf x}-{\bf y}){\rm d}{\bf y}.\nonumber
\end{gather}
For $\|\boldsymbol{\omega}\|_2>2M$, $\hat{\Upsilon}_M(\boldsymbol{\omega})=0\le\hat{\Phi}(\boldsymbol{\omega})$. On the other hand, for $\|\boldsymbol{\omega}\|_2\le 2M$,
\begin{gather}
\begin{split}
&\hat{\Upsilon}_M(\boldsymbol{\omega})=\frac{\hat{\Phi}_*(M)\Gamma(d/2+1)}{2^d M^d \pi^{d/2}}\int_{\mathbb{R}^d} \chi_M({\bf t})\chi_M(\boldsymbol{\omega}-{\bf t}){\rm d}{\bf t}\\
&\le\frac{\hat{\Phi}_*(M)\Gamma(d/2+1)}{2^d M^d \pi^{d/2}}{\rm vol}\;B({\bf 0},2M)=\hat{\Phi}_*(M)\le\hat{\Phi}(\boldsymbol{\omega}),\nonumber
\end{split}
\end{gather}
where $B({\bf 0},2M)=\{\|\boldsymbol{\omega}-{\bf 0}\|_2\le 2M\}$ denotes a ball of radius $2M$ centered at the origin.
So,  $\hat{\Upsilon}(\boldsymbol{\omega})\le\hat{\Phi}(\boldsymbol{\omega})$ for all $\boldsymbol{\omega}\in\mathbb{R}^d$.
The candidate $\Upsilon$ can be recovered from the inverse Fourier transform $(\hat{\Upsilon}_M)^\vee$
\begin{gather}
\begin{split}
&\Upsilon_M({\bf t})=\frac{\hat{\Phi}_*(M)\Gamma(d/2+1)}{2^d M^d \pi^{d/2}}(\chi_M*\chi_M)^\vee({\bf t})\\
&=\frac{\hat{\Phi}_*(M)\Gamma(d/2+1)}{2^d M^d \pi^{d/2}}(2\pi)^{d/2}((\chi_M)^\vee({\bf t}))^2\\
&=\frac{\hat{\Phi}_*(M)\Gamma(d/2+1)}{2^{d/2}}\|{\bf t}\|^{-d}_2 J^2_{d/2}(M\|{\bf t}\|_2),\nonumber
\end{split}
\end{gather}
where $J_\nu$ is a Bessel function of the first kind. A proof of the final equality is given as Lemma 12.2 in \cite{wendland2005}.
Define $\Upsilon_M({\bf 0})$ as 
\begin{gather}
\Upsilon_M({\bf 0})\equiv\lim_{{\bf t}\to{\bf 0}}\Upsilon_M({\bf t})=\frac{\hat{\Phi}_*(M)}{\Gamma(d/2+1)}\left(\frac{M}{2^{3/2}}\right)^d.\nonumber
\end{gather}
This limit follows from the Taylor series representation of the Bessel function \cite{watson}.

Now, a lower bound on the quadratic form involving $\Upsilon$ is developed.
\begin{gather}
\begin{split}
&\sum_{j=1}^n\sum_{k=1}^n\alpha_j\alpha_k\Upsilon_M({\bf x}_j-{\bf x}_k)=\sum_{j=1}^n\alpha_j^2\Upsilon_M({\bf 0})+\sum_{j\ne k}\alpha_j\alpha_k\Upsilon_M({\bf x}_j-{\bf x}_k)\\
&\ge\sum_{j=1}^n\alpha_j^2\Upsilon_M({\bf 0})-\sum_{j\ne k}|\alpha_j||\alpha_k|\Upsilon_M({\bf x}_j-{\bf x}_k)\\
&\ge\sum_{j=1}^n\alpha_j^2\Upsilon_M({\bf 0})-\frac{1}{2}\sum_{j\ne k}(\alpha_j^2+\alpha_k^2)\Upsilon_M({\bf x}_j-{\bf x}_k)\\
&=\sum_{j=1}^n\alpha_j^2\Upsilon_M({\bf 0})-\sum_{j=1}^n\alpha_j^2\sum_{k=1,\;k\ne j}^n\Upsilon_M({\bf x}_j-{\bf x}_k)\\
&=\sum_{j=1}^n\alpha_j^2\left(\Upsilon_M({\bf 0})-\sum_{k=1,\;k\ne j}^n\Upsilon_M({\bf x}_j-{\bf x}_k)\right).\label{quadraticBound1}
\end{split}
\end{gather}
Each $\sum_{k=1,\;k\ne j}^n|\Upsilon({\bf x}_j-{\bf x}_k)|$ can be bounded in terms of the separation distances
\begin{gather}
q_j=\frac{1}{2}\min_{k=1,\ldots,n,\;k\ne j}\|{\bf x}_j-{\bf x}_k\|_2\quad{\rm and}\quad q=\min_j q_j.\nonumber
\end{gather}
For $m\in\mathbb{N}$, let 
\begin{gather}
E_{jm}=\{{\bf x}\in\mathbb{R}^d:mq_j\le\|{\bf x}_j-{\bf x}\|_2<(m+1)q_j\}.\nonumber
\end{gather}
Then, every ${\bf x}_k,\;k\ne j$ is contained in exactly one $E_{jm}$. Further, every $B({\bf x}_k,q)$ is essentially disjoint and completely contained in 
\begin{gather}
\{{\bf x}\in\mathbb{R}^d:mq_j-q\le\|{\bf x}_j-{\bf x}\|_2<(m+1)q_j+q\}.\nonumber
\end{gather}
So, each $E_{jm}$ can contain no more than
\begin{gather}
\frac{((m+1)q_j+q)^d-(mq_j-q)^d}{q^d}=\left(\frac{(m+1)q_j}{q}+1\right)^d-\left(\frac{mq_j}{q}-1\right)^d\nonumber
\end{gather}
data points.
We now make use of the following lemma.

\begin{lemma}\label{countingPtsLemma}
For $d\in\mathbb{N}$ and $q_j\geq q>0$,
\begin{gather}
\left(\frac{(m+1)q_j}{q}+1\right)^d-\left(\frac{mq_j}{q}-1\right)^d\le(3q_j/q)^d m^{d-1}.\nonumber
\end{gather}
\end{lemma}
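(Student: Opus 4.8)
The plan is to reduce the two-parameter inequality to a clean one-variable polynomial inequality and then finish with the binomial theorem. First I would set $s=q_j/q\ge 1$, so the claim reads $((m+1)s+1)^d-(ms-1)^d\le(3s)^d m^{d-1}$ for integer $m\ge 1$. Factoring $s$ out of each base, writing $(m+1)s+1=s(m+1+\epsilon)$ and $ms-1=s(m-\epsilon)$ with $\epsilon:=1/s\in(0,1]$, and cancelling the common $s^d$, this is equivalent to $(m+1+\epsilon)^d-(m-\epsilon)^d\le 3^d m^{d-1}$.

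Next I would eliminate $\epsilon$. Since $0<\epsilon\le 1\le m$, we have $m-\epsilon\ge m-1\ge 0$, so $(m-\epsilon)^d\ge(m-1)^d$, while $(m+1+\epsilon)^d\le(m+2)^d$. Hence the left-hand side is largest at $\epsilon=1$, and it suffices to prove the single-variable inequality $(m+2)^d-(m-1)^d\le 3^d m^{d-1}$ for every real $m\ge 1$.

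Finally I would expand by the binomial theorem. Writing both powers out and subtracting, the $m^d$ terms cancel (their coefficient is $2^0-(-1)^0=0$), leaving $\sum_{i=1}^d\binom{d}{i}m^{d-i}\bigl(2^i-(-1)^i\bigr)$, in which every coefficient $2^i-(-1)^i$ is positive. Because $m\ge 1$ gives $m^{d-i}\le m^{d-1}$ for each $i\ge 1$, the sum is at most $m^{d-1}\sum_{i=1}^d\binom{d}{i}(2^i-(-1)^i)=m^{d-1}\bigl[(3^d-1)-(-1)\bigr]=3^d m^{d-1}$, using $\sum_{i=0}^d\binom{d}{i}2^i=3^d$ and $\sum_{i=0}^d\binom{d}{i}(-1)^i=0$. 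This is exactly the desired bound.

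The main obstacle is that the natural first attempts -- factoring $a^d-b^d=(a-b)\sum_k a^{d-1-k}b^k$ or using the mean value theorem on $x\mapsto x^d$ -- each introduce a spurious factor of $d$ (the number of summands, respectively the derivative constant) and therefore fail, since the inequality is in fact tight: at $m=1,\ s=1$ both sides equal $3^d$. The reduction above sidesteps this because the binomial bookkeeping recovers the constant $3^d$ exactly rather than through a lossy per-term estimate; getting that cancellation right, and checking $m-\epsilon\ge 0$ so the reduction in $\epsilon$ is valid, is the only delicate point.
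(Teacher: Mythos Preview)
Your proof is correct, and it takes a genuinely different route from the paper's argument. The paper proves the lemma by induction on $d$: the base case $d=1$ reduces to $2+q_j/q\le 3q_j/q$, and the inductive step multiplies the hypothesis by the larger base, rearranges via $A^d-B^d=(A^d-AB^{d-1})+(A-B)B^{d-1}$, and then spends a chain of elementary inequalities (rewriting in terms of $c=3q_j/q$ and repeatedly using $q_j/q\ge 1$, $m\ge 1$) to show the resulting right-hand side is at most $c^d m^{d-1}$.

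Your approach is more direct and more transparent about why the constant is sharp. The substitution $s=q_j/q$, $\epsilon=1/s$ cleanly isolates the worst case $\epsilon=1$, and the binomial expansion then recovers the constant $3^d$ \emph{exactly} through the identities $\sum_i\binom{d}{i}2^i=3^d$ and $\sum_i\binom{d}{i}(-1)^i=0$, rather than through a sequence of ad hoc estimates. This makes the tightness at $m=1$, $s=1$ visible from the argument itself. The paper's induction, by contrast, is more mechanical and somewhat obscures where the $3^d$ comes from, though it avoids needing to notice the reduction to $(m+2)^d-(m-1)^d$. Both proofs rely on $m\ge 1$ (yours for $m-\epsilon\ge 0$ and $m^{d-i}\le m^{d-1}$; the paper's for the final $\tfrac{2}{3}+\tfrac{1}{3m}\le 1$), which is the regime actually used downstream.
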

\begin{proof}
Take $d=1$, then 
\begin{gather}
\left(\frac{(m+1)q_j}{q}+1\right)-\left(\frac{mq_j}{q}-1\right)=2+q_j/q\le 3q_j/q.\nonumber
\end{gather}
Now, assume the result is true for $1\le d_*<d$. Let $c=3q_j/q$. Then,
\begin{gather}
\begin{split}
&\left(\frac{(m+1)q_j}{q}+1\right)^{d-1}-\left(\frac{mq_j}{q}-1\right)^{d-1}\le c^{d-1} m^{d-2}\\
\Longrightarrow &\left(\frac{(m+1)q_j}{q}+1\right)^d-\left(\frac{(m+1)q_j}{q}+1\right)\left(\frac{mq_j}{q}-1\right)^{d-1}\le \left(\frac{(m+1)q_j}{q}+1\right)c^{d-1} m^{d-2}\\
\Longrightarrow &\left(\frac{(m+1)q_j}{q}+1\right)^d-\left(\frac{mq_j}{q}-1\right)^d\le\left(\frac{(m+1)q_j}{q}+1\right)c^{d-1} m^{d-2}+\left(\frac{q_j}{q}+2\right)\left(\frac{mq_j}{q}-1\right)^{d-1}.\nonumber
\end{split}
\end{gather}
The proof is completed by showing that the right-hand side of the final inequality is bounded above by $c^dm^{d-1}$. The right-hand side can be represented in terms of $c$ as
\begin{gather}
\begin{split}
&\left(\frac{(m+1)c}{3}+1\right)c^{d-1} m^{d-2}+\left(\frac{c}{3}+2\right)\left(\frac{mc}{3}-1\right)^{d-1}\\
&=c^dm^{d-1}\left[\frac{(m+1)}{3m}+\frac{1}{mc}+\left(\frac{1}{3}+\frac{2}{c}\right)\left(\frac{1}{3}-\frac{1}{mc}\right)^{d-1}\right]\\
&\le c^dm^{d-1}\left[\frac{1}{3}+\frac{1}{3m}+\frac{1}{mc}+\left(\frac{1}{3}-\frac{1}{mc}\right)^{d-1}\right]\\
&\le c^dm^{d-1}\left[\frac{1}{3}+\frac{1}{3m}+\frac{1}{mc}+\frac{1}{3}-\frac{1}{mc}\right]\\
&= c^dm^{d-1}\left[\frac{2}{3}+\frac{1}{3m}\right]\le c^dm^{d-1},\nonumber
\end{split}
\end{gather}
where the first inequality is true because $1/3+2/c\le 1$ and the second inequality is true because $(1/3-1/(mc))^{d-1}$ is a decreasing function of $d\ge 2$.
\end{proof}

Lemma \ref{countingPtsLemma} implies that each $E_{jm}$ contains no more than $(3q_j/q)^d m^{d-1}$ points.
Note that on $E_{jm}$, $\Upsilon({\bf x}_j-{\bf x}_k)$ is bounded above as
\begin{gather}
\begin{split}
&\Upsilon({\bf x}_j-{\bf x}_k)=\frac{\hat{\Phi}_*(M)\Gamma(d/2+1)}{2^{d/2}}\|{\bf x}_j-{\bf x}_k\|^{-d}_2 J^2_{d/2}(M\|{\bf x}_j-{\bf x}_k\|_2)\\
&\le\frac{\hat{\Phi}_*(M)\Gamma(d/2+1)}{2^{d/2}}\|{\bf x}_j-{\bf x}_k\|^{-d}_2 \frac{2^{d+2}}{M\pi\|{\bf x}_j-{\bf x}_k\|}\\
&=\Upsilon_M({\bf 0})\frac{\Gamma^2(d/2+1)}{\pi}\left(\frac{4}{M\|{\bf x}_j-{\bf x}_k\|_2}\right)^{d+1}\\
&\le\Upsilon_M({\bf 0})\frac{\Gamma^2(d/2+1)}{\pi}\left(\frac{4}{Mmq_j}\right)^{d+1},\label{boundOnEjm}
\end{split}
\end{gather}
where the first inequality follows from the Bessel function bound provided in Lemma 3.3 of \cite{narcowich}.
Combining Lemma \ref{countingPtsLemma} with (\ref{boundOnEjm}) gives
\begin{gather}
\begin{split}
&\sum_{k=1,\;k\ne j}^n\Upsilon({\bf x}_j-{\bf x}_k)\le\sum_{m=1}^\infty \Upsilon_M({\bf 0})\frac{\Gamma^2(d/2+1)}{\pi}\left(\frac{4}{Mmq_j}\right)^{d+1}(3q_j/q)^dm^{d-1}\\
&=\Upsilon_M({\bf 0})\frac{\Gamma^2(d/2+1)\pi}{18}\left(\frac{q}{q_j}\right)\left(\frac{12}{Mq}\right)^{d+1},\nonumber
\end{split}
\end{gather}
where the equality follows from the fact that $\sum_{m=1}^{\infty}m^{-2}=\pi^2/6$.
Now, taking $M=c_*/q$ and referring back to (\ref{quadraticBound1}), the quadratic form can be bounded as
\begin{gather}
\sum_{j=1}^n\sum_{k=1}^n\alpha_j\alpha_k\Phi({\bf x}_j-{\bf x}_k)\ge\Upsilon_{c_*/q}({\bf 0})\sum_{j=1}^n\alpha_j^2\left(1-\frac{\Gamma^2(d/2+1)\pi}{18}\left(\frac{q}{q_j}\right)\left(\frac{12}{c_*}\right)^{d+1}\right).\nonumber
\end{gather}
%
The stated version of the theorem
follows by applying the previous development to the transformed space, ${\bf v}\mapsto{\bf v}^*=\boldsymbol{\Theta}{\bf v}$. 

\section{Proof of Theorem \ref{paramThm}}\label{paramThmProof}
First, we develop expressions for the components of the approximate mean squared prediction error in Lemma \ref{paramProp} below.
\begin{lemma}\label{paramProp}
If $f\sim {\rm GP} \left( {\bf h}(\cdot)'\boldsymbol{\beta},\Psi_{\boldsymbol{\theta}}(\cdot,\cdot) \right)$, for fixed, known regression functions ${\bf h}(\cdot)$, and $\Psi_{\boldsymbol{\theta}}(\cdot,\cdot)$ as defined in (\ref{paramEstAssumption}), then
\begin{gather}
\begin{split}
&\frac{\partial\hat{f}_{\boldsymbol{\vartheta}}({\bf x})}{\partial\boldsymbol{\vartheta}'}\mathcal{I}({\boldsymbol{\vartheta}})^{-1} \frac{\partial\hat{f}_{\boldsymbol{\vartheta}}({\bf x})}{\partial\boldsymbol{\vartheta}}= 
{\bf c}'_1\mathcal{I}^{-1}_{11}{\bf c}_1+{\bf c}'_3(\mathcal{I}_{33}-\mathcal{I}_{32}\mathcal{I}_{22}^{-1}\mathcal{I}_{23})^{-1}{\bf c}_3,\label{paramErrExpr1}
\end{split}
\end{gather}
for ${\bf x}\in\mathbb{R}^d$ where
\begin{gather}
\begin{split}
&{\bf c}_1 = {\bf h(x)} - {\bf H(X)}' \Psi_{\boldsymbol{\theta}}({\bf X,X})^{-1} \Psi_{\boldsymbol{\theta}}({\bf X,x}),\\
&{\bf c}_3=\left(\frac{\partial\Psi_{\boldsymbol{\theta}}({\bf x,X})}{\partial\boldsymbol{\varrho}} - ({\bf I}_d\otimes\Psi_{\boldsymbol{\theta}}({\bf x,X})\Psi_{\boldsymbol{\theta}}({\bf X,X})^{-1})\frac{\partial\Psi_{\boldsymbol{\theta}}({\bf X,X})}{\partial\boldsymbol{\varrho}}\right)\Psi_{\boldsymbol{\theta}}({\bf X,X})^{-1}\delta({\bf X}),\\
&\mathcal{I}_{11} ={\bf H}({\bf X})'\Psi_{\boldsymbol{\theta}}({\bf X,X})^{-1}{\bf H}({\bf X}),\\
&\mathcal{I}_{22}=\frac{n}{2\sigma^4},\\
&\mathcal{I}_{32}=\frac{1}{2\sigma^4}{\bf C}'_{\boldsymbol{\theta}}{\rm vec}\;\Phi_{\boldsymbol{\varrho}}({\bf X},{\bf X})^{-1},\\
&\mathcal{I}_{33}=\frac{1}{2\sigma^4}{\bf C}'_{\boldsymbol{\theta}} (\Phi_{\boldsymbol{\varrho}}({\bf X},{\bf X})^{-1}\otimes \Phi_{\boldsymbol{\varrho}}({\bf X},{\bf X})^{-1}){\bf C}_{\boldsymbol{\theta}},\\
&{\bf C}_{\boldsymbol{\theta}}=\frac{\partial({\rm vec}\;\Psi_{\boldsymbol{\theta}}({\bf X,X}))}{\partial\boldsymbol{\varrho}'},\nonumber
\end{split}
\end{gather}
where $\delta({\bf X})=f({\bf X})-{\bf H}({\bf X})\boldsymbol{\beta}$.
\end{lemma}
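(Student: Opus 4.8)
The plan is to read the claimed identity as the evaluation of a single quadratic form whose two ingredients—the gradient $\partial\hat{f}_{\boldsymbol{\vartheta}}({\bf x})/\partial\boldsymbol{\vartheta}$ and the inverse information $\mathcal{I}(\boldsymbol{\vartheta})^{-1}$—each carry exploitable block structure under the ordering $\boldsymbol{\vartheta}=(\boldsymbol{\beta}',\sigma^2,\boldsymbol{\varrho}')'$. The single most useful observation is that under the separable form (\ref{paramEstAssumption}) the variance cancels out of the kriging weights, since $\Psi_{\boldsymbol{\theta}}({\bf x,X})\Psi_{\boldsymbol{\theta}}({\bf X,X})^{-1}=\Phi_{\boldsymbol{\varrho}}({\bf x,X})\Phi_{\boldsymbol{\varrho}}({\bf X,X})^{-1}$. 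Consequently $\hat{f}_{\boldsymbol{\vartheta}}({\bf x})$ does not depend on $\sigma^2$ at all, so the $\sigma^2$-slot of the gradient is identically zero; this is precisely what reduces the $(\sigma^2,\boldsymbol{\varrho})$ contribution to a single Schur-complement term.

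First I would compute the three blocks of the gradient. Differentiating $\hat{f}_{\boldsymbol{\vartheta}}({\bf x})={\bf h(x)}'\boldsymbol{\beta}+\Phi_{\boldsymbol{\varrho}}({\bf x,X})\Phi_{\boldsymbol{\varrho}}({\bf X,X})^{-1}\delta({\bf X})$ with $\delta({\bf X})=f({\bf X})-{\bf H(X)}\boldsymbol{\beta}$ gives $\partial\hat{f}/\partial\boldsymbol{\beta}={\bf h(x)}-{\bf H(X)}'\Psi_{\boldsymbol{\theta}}({\bf X,X})^{-1}\Psi_{\boldsymbol{\theta}}({\bf X,x})={\bf c}_1$ and $\partial\hat{f}/\partial\sigma^2={\bf 0}$. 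For the $\boldsymbol{\varrho}$ block I would differentiate componentwise, using $\partial\Phi_{\boldsymbol{\varrho}}({\bf X,X})^{-1}/\partial\varrho_k=-\Phi_{\boldsymbol{\varrho}}({\bf X,X})^{-1}(\partial\Phi_{\boldsymbol{\varrho}}({\bf X,X})/\partial\varrho_k)\Phi_{\boldsymbol{\varrho}}({\bf X,X})^{-1}$, then stack over $k$ and collect the common factor $\Phi_{\boldsymbol{\varrho}}({\bf x,X})\Phi_{\boldsymbol{\varrho}}({\bf X,X})^{-1}$ into a Kronecker product, which yields exactly ${\bf c}_3$ (the $\sigma^2$ again cancelling between the numerator and the inverse).

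Next I would assemble the information matrix for the Gaussian likelihood of $f({\bf X})\sim\mathcal{N}({\bf H(X)}\boldsymbol{\beta},\sigma^2\Phi_{\boldsymbol{\varrho}}({\bf X,X}))$. The standard mean/covariance orthogonality for Gaussian models forces the $\boldsymbol{\beta}$-versus-$(\sigma^2,\boldsymbol{\varrho})$ cross-block to vanish, giving $\mathcal{I}(\boldsymbol{\vartheta})={\rm blockdiag}(\mathcal{I}_{11},\,\mathcal{I}_{(\sigma^2,\boldsymbol{\varrho})})$ with $\mathcal{I}_{11}={\bf H(X)}'\Psi_{\boldsymbol{\theta}}({\bf X,X})^{-1}{\bf H(X)}$. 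For the covariance block I would use the identity $[\mathcal{I}]_{kl}=\tfrac12{\rm tr}\{\Sigma^{-1}(\partial\Sigma/\partial\vartheta_k)\Sigma^{-1}(\partial\Sigma/\partial\vartheta_l)\}$ with $\Sigma=\sigma^2\Phi_{\boldsymbol{\varrho}}({\bf X,X})$, together with the matrix-calculus identities $({\rm vec}\,A)'({\rm vec}\,B)={\rm tr}(A'B)$ and ${\rm vec}(ABC)=(C'\otimes A){\rm vec}(B)$ from \cite{magnus}, to recover $\mathcal{I}_{22}=n/(2\sigma^4)$, $\mathcal{I}_{32}=\tfrac1{2\sigma^4}{\bf C}'_{\boldsymbol{\theta}}\,{\rm vec}\,\Phi_{\boldsymbol{\varrho}}({\bf X,X})^{-1}$, and $\mathcal{I}_{33}=\tfrac1{2\sigma^4}{\bf C}'_{\boldsymbol{\theta}}(\Phi_{\boldsymbol{\varrho}}({\bf X,X})^{-1}\otimes\Phi_{\boldsymbol{\varrho}}({\bf X,X})^{-1}){\bf C}_{\boldsymbol{\theta}}$.

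Finally I would evaluate the quadratic form. Block-diagonality splits it into ${\bf c}_1'\mathcal{I}_{11}^{-1}{\bf c}_1$ plus the contribution $({\bf 0}'\;\;{\bf c}_3')\,\mathcal{I}_{(\sigma^2,\boldsymbol{\varrho})}^{-1}\,({\bf 0}'\;\;{\bf c}_3')'$ from the covariance block; because the gradient's $\sigma^2$ entry is zero, the block-inverse formula selects only the lower-right corner of $\mathcal{I}_{(\sigma^2,\boldsymbol{\varrho})}^{-1}$, namely the Schur complement $(\mathcal{I}_{33}-\mathcal{I}_{32}\mathcal{I}_{22}^{-1}\mathcal{I}_{23})^{-1}$, producing the stated expression (\ref{paramErrExpr1}). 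I expect the main obstacle to be purely the bookkeeping in the $\boldsymbol{\varrho}$ block—keeping the Kronecker/vec structure of ${\bf c}_3$, $\mathcal{I}_{32}$, and $\mathcal{I}_{33}$ mutually consistent and verifying that every factor of $\sigma^2$ cancels as claimed—rather than any conceptual difficulty, since the orthogonality and Schur-complement steps are routine once the blocks are in hand.
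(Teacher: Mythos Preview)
Your proposal is correct and follows essentially the same route as the paper: compute the three gradient blocks (with $\partial\hat f/\partial\sigma^2=0$), compute the Fisher-information blocks (showing the $\boldsymbol\beta$--covariance cross-blocks vanish), and then apply a partitioned inverse so that the zero gradient in the $\sigma^2$ slot picks out the Schur complement $(\mathcal I_{33}-\mathcal I_{32}\mathcal I_{22}^{-1}\mathcal I_{23})^{-1}$. The only notable difference is that the paper derives the information blocks by explicitly differentiating the log-likelihood twice and taking expectations, whereas you invoke the standard closed-form $\tfrac12\,\mathrm{tr}\{\Sigma^{-1}(\partial\Sigma/\partial\vartheta_k)\Sigma^{-1}(\partial\Sigma/\partial\vartheta_l)\}$ for Gaussian covariance parameters---a harmless shortcut that leads to the same $\mathcal I_{22},\mathcal I_{32},\mathcal I_{33}$.
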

\begin{proof}
Up to an additive constant, the log-likelihood is
\begin{gather}
\ell=
-\frac{1}{2}{\rm log}\;{\rm det}\;\Psi_{\boldsymbol{\theta}}({\bf X,X})-\frac{1}{2}(f({\bf X})-{\bf H}({\bf X})\boldsymbol{\beta})'\Psi_{\boldsymbol{\theta}}({\bf X,X})^{-1}(f({\bf X})-{\bf H}({\bf X})\boldsymbol{\beta}).\nonumber
\end{gather} 
Throughout 
Appendix \ref{paramThmProof}, we will use matrix differentiation, see for example \cite{magnus}.
Then, the vector of derivatives of the emulator with respect to the unknown parameter values $\frac{\partial\hat{f}_{\boldsymbol{\vartheta}}({\bf x})}{\partial\boldsymbol{\vartheta}}$
has block components
\begin{gather}
\begin{split}
{\bf c}_1 &= \frac{\partial\hat{f}_{\boldsymbol{\vartheta}}({\bf x})}{\partial{\boldsymbol{\beta}}} 
= \frac{\partial}{\partial\boldsymbol{\beta}} \left\{ {\bf h(x)}'{\boldsymbol{\beta}} + \Psi_{\boldsymbol{\theta}}({\bf x,X})
\Psi_{\boldsymbol{\theta}}({\bf X,X})^{-1} \left( f({\bf X})-{\bf H}({\bf X}){\boldsymbol{\beta}} \right) \right\}\\
& \hspace{0.7 in} = {\bf h(x)} - {\bf H(X)}' \Psi_{\boldsymbol{\theta}}({\bf X,X})^{-1} \Psi_{\boldsymbol{\theta}}({\bf X,x}),\\
{\bf c}_{2}&=\frac{\partial\hat{f}_{\boldsymbol{\vartheta}}({\bf x})}{\partial\sigma^2}=0.\nonumber
\end{split}
\end{gather}
Developing an expression for $\frac{\partial\hat{f}_{\boldsymbol{\vartheta}}({\bf x})}{\partial\boldsymbol{\varrho}}$ is more complex and broken into a few parts. Let $\delta({\bf X})=f({\bf X})-{\bf H(X)}{\boldsymbol{\beta}}$.
Then,
\begin{gather}
\begin{split}
{\bf c}_3 = \frac{\partial\hat{f}_{\boldsymbol{\vartheta}}({\bf x})}{\partial\boldsymbol{\varrho}} = \left(\frac{\partial\Psi_{\boldsymbol{\theta}}({\bf x,X})}{\partial\boldsymbol{\varrho}}\Psi_{\boldsymbol{\theta}}({\bf X,X})^{-1}+\left( {\bf I}_d\otimes\Psi_{\boldsymbol{\theta}}({\bf x,X}) \right)\frac{\partial\Psi_{\boldsymbol{\theta}}({\bf X,X})^{-1}}{\partial\boldsymbol{\varrho}}\right)\delta({\bf X}).\label{dEmulatordTheta}
\end{split}
\end{gather}
Note that,
\begin{gather}
\begin{split}
{\bf 0}&=\frac{\partial\Psi_{\boldsymbol{\theta}}({\bf X,X})\Psi_{\boldsymbol{\theta}}({\bf X,X})^{-1}}{\partial\boldsymbol{\varrho}}\\
&=\frac{\partial\Psi_{\boldsymbol{\theta}}({\bf X,X})}{\partial\boldsymbol{\varrho}}\Psi_{\boldsymbol{\theta}}({\bf X,X})^{-1}+({\bf I}_d\otimes\Psi_{\boldsymbol{\theta}}({\bf X,X}))\frac{\partial\Psi_{\boldsymbol{\theta}}({\bf X,X})^{-1}}{\partial\boldsymbol{\varrho}}.\nonumber
\end{split}
\end{gather}
So,
\begin{gather}
\begin{split}
\frac{\partial\Psi_{\boldsymbol{\theta}}({\bf X,X})^{-1}}{\partial\boldsymbol{\varrho}}=-({\bf I}_d\otimes\Psi_{\boldsymbol{\theta}}({\bf X,X})^{-1})\frac{\partial\Psi_{\boldsymbol{\theta}}({\bf X,X})}{\partial\boldsymbol{\varrho}}\Psi_{\boldsymbol{\theta}}({\bf X,X})^{-1}\label{dInv}
\end{split}
\end{gather}
Plugging (\ref{dInv}) into equation (\ref{dEmulatordTheta}) gives the third block component
\begin{gather}
\begin{split}
{\bf c}_3&=\frac{\partial\hat{f}_{\boldsymbol{\vartheta}}({\bf x})}{\partial\boldsymbol{\varrho}}\\
&=\left(\frac{\partial\Psi_{\boldsymbol{\theta}}({\bf x,X})}{\partial\boldsymbol{\varrho}} - ({\bf I}_d\otimes\Psi_{\boldsymbol{\theta}}({\bf x,X})\Psi_{\boldsymbol{\theta}}({\bf X,X})^{-1})\frac{\partial\Psi_{\boldsymbol{\theta}}({\bf X,X})}{\partial\boldsymbol{\varrho}}\right)\Psi_{\boldsymbol{\theta}}({\bf X,X})^{-1}\delta({\bf X}).\nonumber
\end{split}
\end{gather}
Now, we develop 
an expression for $\mathcal{I}({\boldsymbol{\vartheta}}_*)$.
First,
\begin{gather}
\begin{split}
\frac{\partial\ell}{\partial\boldsymbol{\beta}} &= 
{\bf H}({\bf X})'\Psi_{\boldsymbol{\theta}}({\bf X,X})^{-1}(f({\bf X})-{\bf H}({\bf X})\boldsymbol{\beta}),\\
\frac{\partial\ell}{\partial\sigma^2} &= -\frac{n}{2\sigma^2}+\frac{1}{2\sigma^4}\left(f({\bf X})-{\bf H}({\bf X})\boldsymbol{\beta}\right)'\Phi_{\boldsymbol{\varrho}}({\bf X},{\bf X})^{-1}(f({\bf X})-{\bf H}({\bf X})\boldsymbol{\beta}).\nonumber
\end{split}
\end{gather}
The derivative of $\ell$ with respect to $\boldsymbol{\varrho}$ can be broken into three parts via the chain rule,
\begin{gather}
\frac{\partial\ell}{\partial\boldsymbol{\varrho}}=\frac{\partial({\rm vec}\;\Psi_{\boldsymbol{\theta}}({\bf X,X}))'}{\partial\boldsymbol{\varrho}} \underbrace{\frac{\partial({\rm vec}\;\Psi_{\boldsymbol{\theta}}({\bf X,X})^{-1})'}{\partial{\rm vec}\;\Psi_{\boldsymbol{\theta}}({\bf X,X})}}_{A} \underbrace{\frac{\partial\ell}{\partial{\rm vec}\;\Psi_{\boldsymbol{\theta}}({\bf X,X})^{-1}}}_{B}.\label{allthreeparts}
\end{gather}
Let
\begin{gather}
{\bf C}_{\boldsymbol{\theta}}=\frac{\partial({\rm vec}\;\Psi_{\boldsymbol{\theta}}({\bf X,X}))}{\partial\boldsymbol{\varrho}'}.\label{part1Vector}
\end{gather}
Parts $A$ and $B$ can be treated in turn. 
Consider part $A$. Similarly to (\ref{dInv}),
\begin{gather}
\begin{split}
{\bf 0}&=\frac{\partial({\rm vec}\;{\bf I}_n)'}{\partial{\rm vec}\;\Psi_{\boldsymbol{\theta}}({\bf X,X})}=\frac{\partial({\rm vec}\;\Psi_{\boldsymbol{\theta}}({\bf X,X})^{-1}\Psi_{\boldsymbol{\theta}}({\bf X,X}))'}{\partial{\rm vec}\;\Psi_{\boldsymbol{\theta}}({\bf X,X})}\\
&=\frac{\partial({\rm vec}\;\Psi_{\boldsymbol{\theta}}({\bf X,X})^{-1})'}{\partial{\rm vec}\;\Psi_{\boldsymbol{\theta}}({\bf X,X})}(\Psi_{\boldsymbol{\theta}}({\bf X,X})\otimes {\bf I}_n)\\
&\quad + \frac{\partial({\rm vec}\;\Psi_{\boldsymbol{\theta}}({\bf X,X}))'}{\partial{\rm vec}\;\Psi_{\boldsymbol{\theta}}({\bf X,X})}({\bf I}_n\otimes \Psi_{\boldsymbol{\theta}}({\bf X,X})^{-1})\\
\Longrightarrow &\frac{\partial({\rm vec}\;\Psi_{\boldsymbol{\theta}}({\bf X,X})^{-1})'}{\partial{\rm vec}\;\Psi_{\boldsymbol{\theta}}({\bf X,X})}=-(\Psi_{\boldsymbol{\theta}}({\bf X,X})^{-1}\otimes \Psi_{\boldsymbol{\theta}}({\bf X,X})^{-1}).\label{part2}
\end{split}
\end{gather}
Next, consider part $B$,
\begin{gather}
\frac{\partial\ell}{\partial{\rm vec}\;\Psi_{\boldsymbol{\theta}}({\bf X,X})^{-1}}=\frac{1}{2}\left[{\rm vec}\;\Psi_{\boldsymbol{\theta}}({\bf X,X})-(f({\bf X})-{\bf H}({\bf X})\boldsymbol{\beta}) \otimes (f({\bf X})-{\bf H}({\bf X})\boldsymbol{\beta})\right].\label{part3}
\end{gather}\\
Equations (\ref{part1Vector}), (\ref{part2}), and (\ref{part3}), and 
can be plugged into equation (\ref{allthreeparts}) to give
\begin{gather}
\begin{split}
\frac{\partial\ell}{\partial\boldsymbol{\varrho}}&=-\frac{1}{2}{\bf C}'_{\boldsymbol{\theta}} (\Psi_{\boldsymbol{\theta}}({\bf X,X})^{-1}\otimes \Psi_{\boldsymbol{\theta}}({\bf X,X})^{-1})\left[{\rm vec}\;\Psi_{\boldsymbol{\theta}}({\bf X,X})-(f({\bf X})-{\bf H}({\bf X})\boldsymbol{\beta})\otimes (f({\bf X})-{\bf H}({\bf X})\boldsymbol{\beta})\right]\\
&=-\frac{1}{2}{\bf C}'_{\boldsymbol{\theta}}\left[{\rm vec}\;\Psi_{\boldsymbol{\theta}}({\bf X,X})^{-1}-\Psi_{\boldsymbol{\theta}}({\bf X,X})^{-1}\delta({\bf X})\otimes \Psi_{\boldsymbol{\theta}}({\bf X,X})^{-1}\delta({\bf X})\right].\nonumber
\end{split}
\end{gather}
So, the 
information matrix has block components
\begin{gather}
\begin{split}
&\mathcal{I}_{11} = \mathcal{I}(\boldsymbol{\beta},\boldsymbol{\beta})=
-\mathbb{E}\frac{\partial^2\ell}{\partial\boldsymbol{\beta}\partial\boldsymbol{\beta}'}={\bf H}({\bf X})'\Psi_{\boldsymbol{\theta}}({\bf X,X})^{-1}{\bf H}({\bf X}),\\
&\mathcal{I}_{21}=\mathcal{I}(\sigma^2,\boldsymbol{\beta})=
-\mathbb{E}\frac{\partial^2\ell}{\partial\sigma^2\partial\boldsymbol{\beta}'}
=\frac{1}{\sigma^4}\mathbb{E}(f({\bf X})-{\bf H}({\bf X})\boldsymbol{\beta})'\Phi_{\boldsymbol{\varrho}}({\bf X},{\bf X})^{-1}{\bf H}({\bf X})={\bf 0}',\\
&\mathcal{I}_{31}=\mathcal{I}(\boldsymbol{\varrho},\boldsymbol{\beta})=
-\mathbb{E}\frac{\partial^2\ell}{\partial\boldsymbol{\varrho}\partial\boldsymbol{\beta}'}\\
&={\bf C}'_{\boldsymbol{\theta}}\left(\Psi_{\boldsymbol{\theta}}({\bf X,X})^{-1}{\bf H}({\bf X}) \otimes\Psi_{\boldsymbol{\theta}}({\bf X,X})^{-1}\mathbb{E}(f({\bf X})-{\bf H}({\bf X})\boldsymbol{\beta}) \right)={\bf 0},\\
&\mathcal{I}_{22}=\mathcal{I}(\sigma^2,\sigma^2)=
-\mathbb{E}\frac{\partial^2\ell}{\partial\sigma^2\partial\sigma^2}\\
&=-\frac{n}{2\sigma^4}+\frac{1}{\sigma^6}\mathbb{E}(f({\bf X})-{\bf H}({\bf X})\boldsymbol{\beta})'\Phi_{\boldsymbol{\varrho}}({\bf X},{\bf X})^{-1}(f({\bf X})-{\bf H}({\bf X})\boldsymbol{\beta})\\
&=-\frac{n}{2\sigma^4}+\frac{1}{\sigma^6}{\rm trace}\;\Phi_{\boldsymbol{\varrho}}({\bf X},{\bf X})^{-1}\sigma^2\Phi_{\boldsymbol{\varrho}}({\bf X},{\bf X})=\frac{n}{2\sigma^4},\\
&\mathcal{I}_{32}=\mathcal{I}(\boldsymbol{\varrho},\sigma^2)=
-\mathbb{E}\frac{\partial^2\ell}{\partial\boldsymbol{\varrho}\partial\sigma^2}\\
&=\frac{1}{2}{\bf C}'_{\boldsymbol{\theta}}
\mathbb{E}\left(-\frac{1}{\sigma^4}{\rm vec}\;\Phi_{\boldsymbol{\varrho}}({\bf X},{\bf X})^{-1}+\frac{2}{\sigma^6}\Phi_{\boldsymbol{\varrho}}({\bf X},{\bf X})^{-1}\delta({\bf X})\otimes \Phi_{\boldsymbol{\varrho}}({\bf X},{\bf X})^{-1}\delta({\bf X})\right)\\
&=\frac{1}{2}{\bf C}'_{\boldsymbol{\theta}}\left(-\frac{1}{\sigma^4}{\rm vec}\;\Phi_{\boldsymbol{\varrho}}({\bf X},{\bf X})^{-1}+\frac{2}{\sigma^4}{\rm vec}\;\Phi_{\boldsymbol{\varrho}}({\bf X},{\bf X})^{-1}\right)\\
&=\frac{1}{2\sigma^4}{\bf C}'_{\boldsymbol{\theta}}{\rm vec}\;\Phi_{\boldsymbol{\varrho}}({\bf X},{\bf X})^{-1}.\nonumber
\end{split}
\end{gather}
Developing a formula for $\mathcal{I}(\boldsymbol{\varrho},\boldsymbol{\varrho})$ is more complex and broken into parts. 
\begin{gather}
\begin{split}
&\mathcal{I}(\boldsymbol{\varrho},\boldsymbol{\varrho})=
-\mathbb{E}\frac{\partial^2\ell}{\partial\boldsymbol{\varrho}\partial\boldsymbol{\varrho}'}\\
&=\frac{1}{2}\mathbb{E}\left({\bf I}_d\otimes \left[({\rm vec}\;\Psi_{\boldsymbol{\theta}}({\bf X,X})^{-1})'-(\delta({\bf X})'\Psi_{\boldsymbol{\theta}}({\bf X,X})^{-1}\otimes \delta({\bf X})'\Psi_{\boldsymbol{\theta}}({\bf X,X})^{-1})\right]\right)\frac{\partial {\bf C}_{\boldsymbol{\theta}}}{\partial\boldsymbol{\varrho}}\\
&\quad+\frac{1}{2}\mathbb{E}\left(\frac{\partial({\rm vec}\;\Psi_{\boldsymbol{\theta}}({\bf X,X})^{-1})'}{\partial\boldsymbol{\varrho}}-\frac{\partial(\delta({\bf X})'\Psi_{\boldsymbol{\theta}}({\bf X,X})^{-1}\otimes \delta({\bf X})'\Psi_{\boldsymbol{\theta}}({\bf X,X})^{-1})}{\partial\boldsymbol{\varrho}}\right){\bf C}_{\boldsymbol{\theta}}.\label{initialinfoexpression}
\end{split}
\end{gather}
Note that the expectation of the first term in (\ref{initialinfoexpression}) is zero, since
\begin{gather}
\begin{split}
&\mathbb{E}(\delta({\bf X})'\Psi_{\boldsymbol{\theta}}({\bf X,X})^{-1}\otimes \delta({\bf X})'\Psi_{\boldsymbol{\theta}}({\bf X,X})^{-1})\\
&=\mathbb{E}\left({\rm vec}\;(\Psi_{\boldsymbol{\theta}}({\bf X,X})^{-1}\delta({\bf X}) \delta({\bf X})'\Psi_{\boldsymbol{\theta}}({\bf X,X})^{-1})\right)'=({\rm vec}\;\Psi_{\boldsymbol{\theta}}({\bf X,X})^{-1})'.\nonumber
\end{split}
\end{gather}
So,
\begin{gather}
\begin{split}
\mathcal{I}(\boldsymbol{\varrho},\boldsymbol{\varrho})
&=\frac{1}{2}\left(-{\bf C}'_{\boldsymbol{\theta}} (\Psi_{\boldsymbol{\theta}}({\bf X,X})^{-1}\otimes \Psi_{\boldsymbol{\theta}}({\bf X,X})^{-1})\right.\\
&\quad\quad\quad\left.-\mathbb{E}\frac{\partial\left({\rm vec}\;(\Psi_{\boldsymbol{\theta}}({\bf X,X})^{-1}\delta({\bf X}) \delta({\bf X})'\Psi_{\boldsymbol{\theta}}({\bf X,X})^{-1})\right)'}{\partial\boldsymbol{\varrho}}\right){\bf C}_{\boldsymbol{\theta}}.\label{infoexpression2}
\end{split}
\end{gather}
The expectation in (\ref{infoexpression2}) is
\begin{gather}
\begin{split}
&\mathbb{E}\frac{\partial\left({\rm vec}\;(\Psi_{\boldsymbol{\theta}}({\bf X,X})^{-1}\delta({\bf X}) \delta({\bf X})'\Psi_{\boldsymbol{\theta}}({\bf X,X})^{-1})\right)'}{\partial\boldsymbol{\varrho}}\\
&=\frac{\partial({\rm vec}\;(\Psi_{\boldsymbol{\theta}}({\bf X,X})^{-1})'}{\partial\boldsymbol{\varrho}}(\mathbb{E}\delta({\bf X}) \delta({\bf X})'\Psi_{\boldsymbol{\theta}}({\bf X,X})^{-1}\otimes{\bf I}_n)\\
&\quad+\frac{\partial({\rm vec}\;(\Psi_{\boldsymbol{\theta}}({\bf X,X})^{-1})'}{\partial\boldsymbol{\varrho}}({\bf I}_n\otimes\mathbb{E}\delta({\bf X}) \delta({\bf X})'\Psi_{\boldsymbol{\theta}}({\bf X,X})^{-1})\\
&=-2{\bf C}'_{\boldsymbol{\theta}} (\Psi_{\boldsymbol{\theta}}({\bf X,X})^{-1}\otimes \Psi_{\boldsymbol{\theta}}({\bf X,X})^{-1}).\label{infoexpression3}
\end{split}
\end{gather}
Plugging (\ref{infoexpression3}) into (\ref{infoexpression2}) gives
\begin{gather}
\begin{split}
\mathcal{I}_{33}=\mathcal{I}(\boldsymbol{\varrho},\boldsymbol{\varrho})=\frac{1}{2}{\bf C}'_{\boldsymbol{\theta}} (\Psi_{\boldsymbol{\theta}}({\bf X,X})^{-1}\otimes \Psi_{\boldsymbol{\theta}}({\bf X,X})^{-1}){\bf C}_{\boldsymbol{\theta}}.\nonumber
\end{split}
\end{gather}
Using partitioned matrix inverse results \cite{harville} and noting that  ${\bf c}_2$, $\mathcal{I}_{21}$, $\mathcal{I}_{12}$, $\mathcal{I}_{31}$, and $\mathcal{I}_{13}$ are matrices of zeros
gives (\ref{paramErrExpr1}).
\end{proof}
Now, the expressions in Lemma \ref{paramProp} are used to prove Theorem \ref{paramThm}.
The first term on the right-hand side of (\ref{paramErrExpr1}) can be bounded above as
\begin{equation}
\begin{split}
{\bf c}'_1\mathcal{I}^{-1}_{11}{\bf c}_1 &= {\bf c}'_1\left( {\bf H(X)}'\Psi_{\boldsymbol{\theta}}({\bf X,X})^{-1}{\bf H(X)} \right)^{-1}{\bf c}_1\\
& \le \frac{\lambda_{\rm max}( \Psi_{\boldsymbol{\theta}} ({\bf X},{\bf X})) } {\lambda_{\rm min}\left({\bf H}({\bf X})'{\bf H}({\bf X})\right)}  \left\|{\bf c}_1\right\|_2^2\le \frac{n\sup_{{\bf u},{\bf v}\in\Omega}\Psi_{\boldsymbol{\theta}}({\bf u},{\bf v})} {\lambda_{\rm min}\left({\bf H}({\bf X})'{\bf H}({\bf X})\right)}  \left\|{\bf c}_1\right\|_2^2.\nonumber
\end{split}
\end{equation}
The eigenvalue $\lambda_{\rm min}\left({\bf H}({\bf X})'{\bf H}({\bf X})\right)$ has approximation
\begin{gather}
\lambda_{\rm min}\left({\bf H}({\bf X})'{\bf H}({\bf X})\right)=\lambda_{\rm min}\left(\sum_{i=1}^n {\bf h}({\bf x}_i){\bf h}({\bf x}_i)'\right)\approx n\lambda_{\rm min}\left(\int {\bf h}({\bf y}){\bf h}({\bf y})'{\rm d}F({\bf y})\right)=n s_1,\label{boundBound}
\end{gather}
where $F$ denotes the large sample distribution of the input locations ${\bf X}$, $s_1\ge 0$, and $s_1>0$ unless ${\bf h}({\bf y})'{\bf a}=0$ with probability $1$ with respect to the large sample distribution $F$ for some ${\bf a}\ne 0$.
Giving approximate upper bound to the first term on the right-hand side of (\ref{paramErrExpr1})
\begin{gather}
{\bf c}'_1\mathcal{I}^{-1}_{11}{\bf c}_1\le\frac{\sup_{{\bf u},{\bf v}\in\Omega}\Psi_{\boldsymbol{\theta}}({\bf u},{\bf v})} {s_1}  \left\|{\bf c}_1\right\|_2^2,\label{term1Bound}
\end{gather}
where $s_1$ is implicitly defined in (\ref{boundBound}) and the probability of the inequality being violated by more than $\varepsilon>0$ goes to zero as $n\to\infty$.
The second term on the right-hand side of the approximate parameter estimation error expression (\ref{paramErrExpr1}) has
\begin{gather}
\begin{split}
&{\bf c}'_3(\mathcal{I}_{33}-\mathcal{I}_{32}\mathcal{I}_{22}^{-1}\mathcal{I}_{23})^{-1}{\bf c}_3\le\|{\bf c}_3\|_2^2\left/\lambda_{\rm min}\left(\mathcal{I}_{33}-\mathcal{I}_{32}\mathcal{I}_{22}^{-1}\mathcal{I}_{23}\right).\right.\label{c3Bound}
\end{split}
\end{gather}
Note that 
\begin{gather}
\begin{split}
&\mathcal{I}_{33}-\mathcal{I}_{32}\mathcal{I}_{22}^{-1}\mathcal{I}_{23}\\
&=\frac{1}{2\sigma^4}{\bf C}'_{\boldsymbol{\theta}}\left(\left(\Phi_{\boldsymbol{\varrho}}({\bf X},{\bf X})^{-1}\otimes\Phi_{\boldsymbol{\varrho}}({\bf X},{\bf X})^{-1}\right)-\frac{1}{n}\left({\rm vec}\;\Phi_{\boldsymbol{\varrho}}({\bf X},{\bf X})^{-1}\right)\left({\rm vec}\;\Phi_{\boldsymbol{\varrho}}({\bf X},{\bf X})^{-1}\right)'\right){\bf C}_{\boldsymbol{\theta}}.\nonumber
\end{split}
\end{gather}
The matrix inside the quadratic form has eigenvector ${\bf u}_1={\rm vec}\;\Phi_{\boldsymbol{\varrho}}({\bf X},{\bf X})/\|{\rm vec}\;\Phi_{\boldsymbol{\varrho}}({\bf X},{\bf X})\|_2$ with corresponding eigenvalue $0$.
So, the minimum eigenvalue of the above expression can be bounded below by
\begin{gather}
\begin{split}
&\frac{1}{2\sigma^4}\lambda_{\rm min}\left({\bf C}'_{\boldsymbol{\theta}}({\bf I}_{n^2}-{\bf u}_1{\bf u}'_1){\bf C}_{\boldsymbol{\theta}}\right)\\
&\times\lambda_{2}\left(\left(\Phi_{\boldsymbol{\varrho}}({\bf X},{\bf X})^{-1}\otimes\Phi_{\boldsymbol{\varrho}}({\bf X},{\bf X})^{-1}\right)-\frac{1}{n}\left({\rm vec}\;\Phi_{\boldsymbol{\varrho}}({\bf X},{\bf X})^{-1}\right)\left({\rm vec}\;\Phi_{\boldsymbol{\varrho}}({\bf X},{\bf X})^{-1}\right)'\right),\nonumber
\end{split}
\end{gather}
where $\lambda_{2}\left(\cdot\right)$ denotes the \emph{second} smallest eigenvalue of its argument. By Weyl's theorem \cite{ipsen}, the second smallest eigenvalue of the perturbed matrix can be bounded below by 
\begin{gather}
\lambda_{\rm min}\left(\left(\Phi_{\boldsymbol{\varrho}}({\bf X},{\bf X})^{-1}\otimes\Phi_{\boldsymbol{\varrho}}({\bf X},{\bf X})^{-1}\right)\right)=1/\lambda_{\rm max}\left(\Phi_{\boldsymbol{\varrho}}({\bf X},{\bf X})\right)^2\ge 1/(n\sup_{{\bf u},{\bf v}\in\Omega}\Phi_{\boldsymbol{\varrho}}({\bf u},{\bf v}))^2.\label{boundStar}
\end{gather}
Further, 
\begin{gather}
\begin{split}
&{\bf C}'_{\boldsymbol{\theta}}({\bf I}_{n^2}-{\bf u}_1{\bf u}'_1){\bf C}_{\boldsymbol{\theta}}\\
&=\sigma^4\left[\sum_{i,j}\frac{\partial\Phi_{\boldsymbol{\varrho}}({\bf x}_i,{\bf x}_j)}{\partial\boldsymbol{\varrho}}\frac{\partial\Phi_{\boldsymbol{\varrho}}({\bf x}_i,{\bf x}_j)}{\partial\boldsymbol{\varrho}'}\right.\\
&\left.-\frac{1}{\|{\rm vec}\;\Phi_{\boldsymbol{\varrho}}({\bf X},{\bf X})\|_2^2}\left(\sum_{i,j}\frac{\partial\Phi_{\boldsymbol{\varrho}}({\bf x}_i,{\bf x}_j)}{\partial\boldsymbol{\varrho}}\Phi_{\boldsymbol{\varrho}}({\bf x}_i,{\bf x}_j)\right)\left(\sum_{i,j}\frac{\partial\Phi_{\boldsymbol{\varrho}}({\bf x}_i,{\bf x}_j)}{\partial\boldsymbol{\varrho}}\Phi_{\boldsymbol{\varrho}}({\bf x}_i,{\bf x}_j)\right)'\right]\\
&\approx n^2\sigma^4\left[\int\frac{\partial\Phi_{\boldsymbol{\varrho}}({\bf x},{\bf y})}{\partial\boldsymbol{\varrho}}\frac{\partial\Phi_{\boldsymbol{\varrho}}({\bf x},{\bf y})}{\partial\boldsymbol{\varrho}'}{\rm d}F\times F({\bf x},{\bf y})\right.\\
&\left.-\frac{1}{\|\Phi_{\boldsymbol{\varrho}}\|^2_{L_2(F\times F)}}\left(\int \frac{\partial\Phi_{\boldsymbol{\varrho}}({\bf x},{\bf y})}{\partial\boldsymbol{\varrho}}\Phi_{\boldsymbol{\varrho}}({\bf x},{\bf y}) {\rm d}F\times F({\bf x},{\bf y})\right)\left(\int \frac{\partial\Phi_{\boldsymbol{\varrho}}({\bf x},{\bf y})}{\partial\boldsymbol{\varrho}}\Phi_{\boldsymbol{\varrho}}({\bf x},{\bf y}) {\rm d}F\times F({\bf x},{\bf y})\right)'\right]\\
&\succeq n^2\sigma^4 s_2,\label{boundStar2}
\end{split}
\end{gather}
where $F\times F$ denotes the \emph{product measure} \cite{bartle}.
Applying a version of the Cauchy-Schwarz inequality for random vectors in $L_2(F\times F)$, 
provides 
$s_2\ge 0$ and $s_2>0$, unless $\frac{\partial\Phi_{\boldsymbol{\varrho}}({\bf x},{\bf y})}{\partial\boldsymbol{\varrho}'}{\bf a}=\Phi_{\boldsymbol{\varrho}}({\bf x},{\bf y})b$ with probability 1 with respect to the large sample distribution $F\times F$ for some $\left({\bf a}'\;\;b\right)'\ne {\bf 0}$ \cite{tripathi}.
Combining the bounds in equations (\ref{c3Bound}), (\ref{boundStar}), and (\ref{boundStar2}), gives
\begin{gather}
{\bf c}'_3(\mathcal{I}_{33}-\mathcal{I}_{32}\mathcal{I}_{22}^{-1}\mathcal{I}_{23})^{-1}{\bf c}_3\le\frac{2\sup_{{\bf u},{\bf v}\in\Omega}\Phi_{\boldsymbol{\varrho}}({\bf u},{\bf v})^2}{s_2}\|{\bf c}_3\|_2^2,\label{term2Bound}
\end{gather}
with $s_2$ implicly defined in equation (\ref{boundStar2}) and the probability of the inequality being violated by more than $\varepsilon>0$ going to zero as $n\to\infty$.
Combining the approximate bounds (\ref{term1Bound}) and (\ref{term2Bound}) gives the result.

\section*{Acknowledgements}
The authors gratefully acknowledge funding from NSF DMS-1621722.

\end{document}